\numberwithin{equation}{section}
\theoremstyle{plain}
\newtheorem{thm}{Theorem}[section]
\newtheorem{lem}[thm]{Lemma}
\newtheorem{cor}[thm]{Corollary}
\newtheorem{prop}[thm]{Proposition}
\theoremstyle{definition}
\newtheorem{exam}[thm]{Example}
\newtheorem{dfn}[thm]{Definition}
\newtheorem{defn}[thm]{Definition}
\DeclareMathOperator{\supp}{\mathrm{supp}}
\DeclareMathOperator{\im}{{\mathrm{Im}}}
\DeclareMathOperator{\re}{{\mathrm{Re}}}
\DeclareMathOperator{\Zt}{{\mathscr{Z}}}
\DeclareMathOperator{\Ot}{{\mathscr{O}}}
\DeclareMathOperator{\Otm}{{\mathscr{O}_M}}
\DeclareMathOperator{\Jtm}{\mathcal{J}_M}
\DeclareMathOperator{\Jt}{\mathcal{J}}
\title[Non completely solvable ...]{Non completely solvable systems\\ of
   complex first order 
PDE's}
\author[C.D.Hill]{C. Denson Hill}
 \address{C. D. Hill:       Department of Mathematics,\\
        Stony Brook University\\
        Stony Brook NY 11794 (USA)}
\email{dhilll@math.sunysb.edu}
\author[M.~Nacinovich]{Mauro Nacinovich}
\address{M.\ Nacinovich:
Dipartimento di Matematica\\ II Universit\`a di Roma
``Tor Ver\-ga\-ta''\\ Via della Ricerca Scientifica\\ 00133 Roma
(Italy)}
\email{nacinovi@mat.uniroma2.it}
\subjclass[2000]{Primary: 35F05 
Secondary: 32V05, 14M15, 17B20, 57T20}
\keywords{Complex vector fiedls, $CR$ manifolds}
\begin{document}
\maketitle

\section*{Introduction}
\subsection*{History and motivation} 
Hans Lewy in \cite{L57} and Louis Niremberg in \cite{Nir74}
gave two fundamental results in the theory of linear partial
differential equations. The first showed that a non homogeneous
equation for a first order partial differential operator with
complex valued
real analytic coefficients, but $\mathcal{C}^{\infty}$-smooth
right hand side, may, in general, have no local weak solution.
The second, that a homogeneous equation 
for a first order partial differential with complex valued smooth
coefficients may have no non constant weak local solutions.
Both results were formulated and proved for partial differential
operators in $\mathbb{R}^3$. 
A fuller understanding of \cite{L57} opened different directions
of investigation (see e.g. \cite{AFN81, AH72, Ho61, NT70, NT71}),
especially from the two points of view of p.d.e. theory and of
the analysis of $CR$ manifolds.\par
Nirenberg's example was especially relevant to the problem of
embedding $CR$ manifold into complex manifolds. From this point
of view, there have been two types of results. The
Nirenberg
example means that pseudoconvex
three dimensional $CR$ hypersurfaces
cannot be locally $CR$-embedded. However 
the existence of
sufficiently many independent solutions of the tangential 
Cauchy-Riemann equations was shown to hold for 
pseudoconvex higher dimensional $CR$ hypersurfaces
(see e.g. \cite{Ak87, BdM74, C94, K82a, K82b, K82c}), and
some general results were also obtained
in terms of Lie algebras of vector fields (see e.g. \cite{BR87, HN99}).
In the opposite direction, the counterexample of \cite{Nir74} was 
extended to $CR$ hypersurfaces with degenerate or non degenerate
Lorentzian signature (see e.g. \cite{H88, H91, J86, JT82, JT83} 
\par
The results above were all obtained for the case of $CR$ hypersurfaces.
For higher codimension, a crucial invariant is the
scalar Levi form, which is parametrized by the characteristic codirections
of the tangential Cauchy-Riemann complex. The first result 
in higher codimension on 
the absence of the Poincar\'e lemma at the place $q$
when some non degenerate scalar Levi form has $q$ positive eigenvalues
was first proved in \cite{AFN81}. In \cite{HN06} this result was extended
to some cases where the scalar Levi form is allowed to degenerate. 
Much less is known about the $CR$-embedding of manifolds of higher
$CR$ codimension. In \cite{mez93} some results of \cite{JT82} are
extended under some supplementary conditions of the Cauchy-Riemann
distribution. We also cite some partial results in \cite{BHN02,
BHN03, HN93}. \par
Here we want to reconsider some of these questions, also in the more general
framework of general distributions of complex vector fields 
of \cite{AHNP}.
\subsection*{Contents of the paper}
Let $M$ be a smooth paracompact manifold of dimension $m$, and let
$L_1,\hdots,L_n$ be smooth complex vector fields on $M$. In local coordinates
each $L_j$ can be written as
\begin{equation}
  \label{eq:a1}
  L_j=L_j(x,D)={\sum}_{i=1}^ma_{j,i}(x)\frac{\partial}{\partial{x}_i},
\end{equation}
with coefficients $a_{j,i}$ which are assumed to be complex valued and
$\mathcal{C}^{\infty}$-smooth.
We are interested in considering local solutions of the 
homogeneous system
\begin{equation}
  \label{eq:a2}
  L_ju=0,\quad \text{for}\; j=1,\hdots,n.
\end{equation}
When $n>1$, since every local distribution solution $u$ of \eqref{eq:a2}
also satisfies \begin{equation*}
[L_{j_1},L_{j_2}]u=
(L_{j_1}L_{j_2}-L_{j_2}L_{j_1})u=0, \;\hdots,\;
[L_{j_1},[L_{j_2},[\hdots,L_{j_r}]]]u=0
\end{equation*}
for all sequence
$j_1,j_2,\hdots,j_r$ with  $1\leq j_1,j_2,\hdots,j_r\leq{n}$,
it is not restrictive to require that $L_1,\hdots,L_n$ satisfy the
formal Cartan integrability conditions, i.e. that all commutators
$[L_{j_1},L_{j_2}]$ are linear combinations, with smooth coefficients,
of $L_1,\hdots,L_n$. \par
When this condition is satisfied, and
$L_1,\hdots,L_n$ define linearly independent tangent vectors on
a neighborhood $U_0$ of a point $p_0\in{M}$, there are at most
$m-n$ solutions $u_1,\hdots,u_{m-n}$
of \eqref{eq:a2}, with $du_1(p_0),\hdots,du_{m-n}(p_0)$ linearly
independent. In fact this is always the case when the $L_j$'s
have coefficients that are real analytic in some coordinate
neighborhood of $p_0$. Nirenberg's result in \cite{Nir74} shows
that in general this is not true in the $\mathcal{C}^{\infty}$ case 
if $n=1$, $m=3$. A small perturbation
of a vector field for which \eqref{eq:a2} has two analytically independent
solutions changes to a vector field for which all local solutions of
\eqref{eq:a2} are constant. \par
In \S{\ref{sec:b}}
we show how this result extends to the case
where $n=1$, but $m$ is allowed to be any integer larger or equal
to $3$. Namely, we show that the smooth complex vector fields
for which \eqref{eq:a2} admits non locally constant solutions near
some point of $M$ form a small nowhere dense set of first Baire
category in the Fr\'echet space of complex vector fields on $M$.
\par
This generalization of \cite{Nir74} was already given in \cite{mez93},
and our main goal is to extend in fact the results of \cite{JT82}
to the case of higher $CR$ codimension. 
We show that in general, given a smooth manifold $M$ and
any locally $CR$-embeddable
Lorentzian $CR$ structure on $M$, and a point $p_0\in{M}$,
there is a new Lorentzian $CR$ structure, 
which is defined on a neighborhood
of $p_0$ in $M$, and agrees to infinite order with the original one at
$p_0$, 
which is not locally $CR$-embeddable. 
We also show that the corresponding system \eqref{eq:a2} is not
completely integrable in the class $\mathcal{C}^1$.
\par
In \S\ref{sec:d} we collect the notions on $CR$ manifolds that will
be employed throughout the rest of the paper.
In \S\ref{sec:3x}, \S\ref{sec4}, \S\ref{sh}, \S\ref{sec:4} 
we prove the analog of the result
of \S\ref{sec:b} for overdetermined systems 
by adaptations of the arguments therein. 
The results
are weaker than those obtained for a scalar p.d.e. 
In fact, our constructions involve perturbations of an original
system which, to keep formal integrability, employ either 
functions that are constant with respect to some variables, or,
in \S\ref{sh}, special morphisms of $CR$ manifolds, and, 
in the more special cases of \S\ref{s:5}, 
analytic objects, called
$CR$-divisors. In general, we obtain
new overdetermined systems which are only defined in small
coordinate neighborhoods.
In \S\ref{s6} we prove that we can globally define a new
$CR$ structure on the Lorentzian real quadric $Q$ in $\mathbb{CP}^{\nuup}$
which is not locally $CR$-embeddable at all points of a hyperplane section.
\par
In \S\ref{sh} we also describe the $CR$ complexes and show in \S\ref{sec:62}
how the technique used in the rest of the paper can be also
employed to give proofs of the non validity of the Poincar\'e  lemma
different from those of \cite{AFN81, AH72, HN06}.

\section{Homogeneous equations with no nontrivial solutions}
\label{sec:b}
In this section we prove  
a generalization  to dimensions $\geq{3}$ of a remarkable theorem 
of Nirenberg 
about local homogeneous solutions
to a single homogeneous linear partial differential equation
having smooth variable complex coefficients
(\cite{Nir74}, see also  \cite{JT82, 
  mez93}).
\par\smallskip
Here and in the following sections, $M$ will denote a smooth paracompact
real manifold of dimension $m$.\par
We denote by $\mathfrak{X}^{\mathbb{C}}(M)$ the Fr\'echet space
of all $\mathcal{C}^{\infty}$ complex vector fields on $M$.
Note that $\mathfrak{X}^{\mathbb{C}}(M)$ includes also real
vector fields on $M$. 
When $M$ is an open set in $\mathbb{R}^m$, each 
$L\in\mathfrak{X}^{\mathbb{C}}(M)$ can be written as
\begin{equation*}
  L=a_1(x)\frac{\partial}{\partial{x}_1}+a_2(x)\frac{\partial}{\partial{x}_2}
+\cdots +a_m(x)\frac{\partial}{\partial{x}_m},
\end{equation*}
where $x=(x_1,x_2,\hdots,x_m)$, and the coefficients $a_j(x)\in
\mathcal{C}^{\infty}(M)$ are (in general) complex valued.\par
\begin{thm}\label{thm:b1}
  Let $M$ be a smooth manifold of dimension $m\geq{3}$. Then
the set $\mathfrak{E}$ 
of $L\in\mathfrak{X}^{\mathbb{C}}(M)$ for which there
exists a non empty open subset $U$ of $M$,
an $\epsilon>0$,   and a solution $u\in\mathcal{C}^{1+\epsilon}(U)$
of $Lu=0$ on $U$ with $du(p)\neq{0}$ for at least one $p\in{U}$,
is a nowhere dense set of first (thin) Baire category.
\end{thm}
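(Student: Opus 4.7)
The plan is to exhibit $\mathfrak{E}$ as a countable union of closed subsets of $\mathfrak{X}^{\mathbb{C}}(M)$, each with empty interior. Choose a countable cover of $M$ by relatively compact coordinate balls $\{U_n\}$ and, for each $n$, exhaust $U_n$ by an increasing sequence of compact sets $K_{n,\ell}$. For positive integers $n,\ell,k,j,\nu$ set
\[
\mathfrak{E}_{n,\ell,k,j,\nu}=\Bigl\{L\in\mathfrak{X}^{\mathbb{C}}(M):\,\exists\,u\in\mathcal{C}^{1+1/\nu}(U_n),\,\|u\|_{\mathcal{C}^{1+1/\nu}(U_n)}\leq k,\,Lu=0,\,\max_{K_{n,\ell}}|du|\geq\tfrac{1}{j}\Bigr\}.
\]
After restricting any local solution to a smaller ball, normalising, and choosing a compact set through its nontriviality point, one checks that $\mathfrak{E}=\bigcup\mathfrak{E}_{n,\ell,k,j,\nu}$. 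Closedness of each piece follows from an Arzel\`a--Ascoli argument: if $L_h\to L$ in the Fr\'echet topology and $u_h$ are solutions satisfying the uniform bounds, then $\{u_h\}$ is precompact in $\mathcal{C}^1(U_n)$; a convergent subsequence $u_h\to u$ yields $L_hu_h\to Lu=0$, the $\mathcal{C}^{1+1/\nu}$-bound survives by lower semicontinuity of the H\"older seminorm, and the nontriviality point persists because $K_{n,\ell}$ is compact.

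The main step is to show that no piece $\mathfrak{E}_{n,\ell,k,j,\nu}$ has nonempty interior, equivalently that vector fields admitting no nontrivial $\mathcal{C}^{1+\epsilon}$-solution over $K_{n,\ell}$ are dense. Given $L_0\in\mathfrak{X}^{\mathbb{C}}(M)$ and any Fr\'echet neighborhood $\mathcal{V}$ of $L_0$, I would construct $L\in\mathcal{V}$ whose homogeneous equation admits no local $\mathcal{C}^{1+\epsilon}$-solution with $du\ne 0$ anywhere in $K_{n,\ell}$. Cover $K_{n,\ell}$ by finitely many small coordinate balls $B_s$; in each $B_s$, after a linear change of coordinates, treat $(x_1,x_2,x_3)$ as a distinguished triple and view $(x_4,\dots,x_m)=y$ as parameters. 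Add to $L_0$ a perturbation $\widetilde L_s$ supported in $B_s$ which, restricted to each constant-$y$ slice, realises the three-dimensional Nirenberg construction of \cite{Nir74}, as revisited in \cite{JT82, mez93}. The Nirenberg mechanism applied slicewise forces every local $\mathcal{C}^{1+\epsilon}$-solution of $(L_0+\widetilde L_s)u=0$ to be locally constant in $(x_1,x_2,x_3)$; running the construction with a rotated distinguished triple then forces $u$ to be constant in all $m$ variables. Rescaling the Nirenberg building block makes the total perturbation $\widetilde L=\sum_s\widetilde L_s$ lie inside $\mathcal{V}$, so $L=L_0+\widetilde L\notin\mathfrak{E}_{n,\ell,k,j,\nu}$. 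Thus each piece is closed and nowhere dense, whence $\mathfrak{E}$ is of first (thin) Baire category.

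The main obstacle is precisely this perturbation step when $m>3$. The Nirenberg example is intrinsically three-dimensional, so one must verify that (i) forcing slicewise constancy really propagates to full constancy in all $m$ variables rather than leaving free functions of $y$ floating around; (ii) the finitely many local perturbations needed to cover $K_{n,\ell}$ can be assembled without reintroducing nontrivial solutions in a region where another perturbation has already done its job; and (iii) the $\mathcal{C}^{1+\epsilon}$ regularity hypothesis --- essential to the Nirenberg exclusion of weak solutions --- is respected by every patching step. Meanwhile, the combined perturbation must be controllable simultaneously in every Fr\'echet seminorm, so that the same argument handles all of the countably many pieces $\mathfrak{E}_{n,\ell,k,j,\nu}$. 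The technical weight of the proof sits in the careful bookkeeping of these constraints.
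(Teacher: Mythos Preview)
Your Baire-category scaffolding (countable decomposition, Arzel\`a--Ascoli for closedness) is sound and matches the paper in spirit, but your choice of decomposition forces an unnecessarily hard perturbation step, and your proposed perturbation has a genuine gap.

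First, the decomposition. By demanding $\max_{K_{n,\ell}}|du|\geq 1/j$ over a compact set, you oblige yourself to build a perturbation that kills \emph{all} nontrivial solutions throughout $K_{n,\ell}$---hence the patching headaches (ii) and (iii) you flag. The paper instead takes a countable \emph{basis} $\{U_\nu\}$ of open sets, fixes one point $p_\nu\in U_\nu$, and defines each piece by $|du(p_\nu)|\geq 1/h$. Since $\{U_\nu\}$ is a basis, any $u$ with $du(p)\ne 0$ at some $p$ has $du\ne 0$ on a neighborhood, which contains some $U_\nu$, hence $du(p_\nu)\ne 0$. This reduces the density step to: given $L_0$, $p_0$, $\epsilon$, find $L$ with $\|L-L_0\|_h<\epsilon$ such that every local $\mathcal{C}^1$ solution of $Lu=0$ has $du(p_0)=0$. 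No patching, no compact exhaustions.

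Second, the perturbation itself. Your slicewise Nirenberg idea is not well-posed for a general $L_0$: the operator need not be tangent to the $y=\text{const}$ slices, so there is no three-dimensional restriction to which Nirenberg applies, and stacking ``rotated'' perturbations to kill the remaining variables risks destroying what the first perturbation achieved. The paper's Lemma~\ref{lem:b2} avoids this entirely with a direct $m$-dimensional construction: perturb $L_0$ slightly to have real-analytic coefficients and $L_0,\bar L_0,[L_0,\bar L_0]$ independent at $p_0$; by Cauchy--Kowalevski obtain $m-1$ independent solutions $z_0,\dots,z_k$ giving a local $CR$ embedding of $CR$ dimension $1$ and codimension $k=m-2$; arrange coordinates so that the level sets $M_\tau=\{z_1=\tau\}$ are $(m{-}2)$-spheres shrinking to $p_0$; then for any solution $u$ the fiber integral $F(\tau)=\int_{M_\tau} u\,dz_0\wedge dz_2\wedge\cdots\wedge dz_k$ is holomorphic (Morera) and vanishes by a boundary-collapse argument. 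A perturbation supported on a sequence of disjoint tubes $\{z_1\in D_\nu\}$ converging to $p_0$ then forces, via the mean value theorem, each partial derivative of $u$ to vanish at $p_0$. This single-point construction is what your outline is missing; once you have it, the Baire argument goes through with the simpler decomposition.
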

In other words: the set of all $L$ on $M$ having the property
that any $u$ with H\"older continuous first derivatives, which
is a local solution to $Lu=0$, in any neighborhood of any point,
must be constant, is a dense set of the second (thick) Baire category.
\par
First we prove a Lemma.
\begin{lem}\label{lem:b2}
Let $M$ be a smooth Riemannian manifold of dimension $m\geq{3}$, and
let $L_0\in\mathfrak{X}^{\mathbb{C}}(M)$. Then for every 
point $p_0\in{M}$, $h\in\mathbb{N}$, and $\epsilon>0$ we can find 
$L\in\mathfrak{X}^{\mathbb{C}}(M)$ with
\begin{gather}
  \|L-L_0\|_{h,\, M}<\epsilon \quad\text{on}\quad{M},\;\\
\intertext{such that}
\label{eq:b2}
u\in\mathcal{C}^{1}(U),\;
U^{\mathrm{open}}\ni{p_0},\; Lu=0\;\text{on}\; U\;
\Longrightarrow \; du(p_0)=0.
\end{gather}
\end{lem}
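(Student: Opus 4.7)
The plan is to realize $L$ as a Nirenberg--M\'ezi\`eres perturbation of $L_0$ supported in an arbitrarily small coordinate ball around $p_0$, adapting the constructions of \cite{Nir74, JT82, mez93}. First I would \emph{localize}: fix coordinates $(x_1,\hdots,x_m)$ on a chart about $p_0=0$ and design $\delta L:=L-L_0$ to be smooth, compactly supported in a small ball $B\subset{M}$, and of $\mathcal{C}^h(M)$-norm less than $\epsilon$, so the closeness hypothesis is automatic.

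Second, I would \emph{normalize $L_0$ at $p_0$}. After absorbing an arbitrarily small constant-coefficient correction into the budget, I may assume $\mathrm{Re}\,L_0(p_0)$ and $\mathrm{Im}\,L_0(p_0)$ are $\mathbb{R}$-linearly independent, and a linear change of coordinates normalizes $L_0(0)=\partial_{x_1}+i\partial_{x_2}$. Since $m\geq 3$ a third real direction $\partial_{x_3}$ is independent; set $z:=x_2+ix_3$ and $s:=(x_4,\hdots,x_m)$. A further smooth change of coordinates --- straightening out an integral $3$-submanifold through $p_0$ of an integrable small deformation of the plane distribution spanned by $\mathrm{Re}\,L_0,\mathrm{Im}\,L_0$ --- allows me to arrange that $L_0$ is tangent to the slice $\Sigma=\{s=0\}$ along a neighborhood of $0$ in $\Sigma$ (a pure change of coordinates costs nothing in the $\mathcal{C}^h$-budget).

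Third, I would apply the \emph{Nirenberg--M\'ezi\`eres construction} on $\Sigma$: produce a real-valued lacunary series $\phi(x_1,x_2,x_3)=\sum_{k\geq k_0}2^{-k}\phi_k$, with $\phi_k$ smooth bumps oscillating at scale $2^{-k}$, such that $\|\phi\|_{\mathcal{C}^h}<\epsilon/C$ while every $\mathcal{C}^1$ local solution near $0\in\mathbb{R}^3$ of the model equation
\begin{equation*}
\bigl(\partial_{x_1}+i(1+\phi(x_1,x_2,x_3))\partial_{\bar z}\bigr)v=0
\end{equation*}
is locally constant. Extending $\phi$ independently of $s$ and multiplying by a cutoff $\chi$ equal to $1$ near $p_0$ and supported in $B$, I set $\delta L:=i\chi(x)\phi(x_1,x_2,x_3)\partial_{\bar z}$.

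Finally, for $u\in\mathcal{C}^1(U)$ solving $Lu=0$ near $p_0$, the restriction $u|_\Sigma$ satisfies the Nirenberg model equation on $\Sigma$ near $0$ (after division by the non-vanishing factor coming from $L_0(0)$), because $L|_\Sigma$ is tangent to $\Sigma$; hence $u|_\Sigma$ is constant near $0$, giving $\partial_{x_i}u(0)=0$ for $i=1,2,3$. To kill the remaining partials $\partial_{x_j}u(0)$ with $j\geq 4$, iterate the construction on complementary $3$-slices through $p_0$ involving $x_j$ in place of $x_3$, each step contributing a further $\mathcal{C}^h$-small perturbation supported in a strictly smaller nested subball so as to preserve the tangentiality and Nirenberg behaviour from previous steps. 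The \emph{main obstacle} is the explicit construction of the lacunary $\phi$, simultaneously $\mathcal{C}^h$-small yet oscillating wildly enough to rule out every non-constant local $\mathcal{C}^1$ solution: this is the technical heart inherited from the Nirenberg a priori estimate \cite{Nir74, JT82}. A secondary issue is the bookkeeping of the slice iteration, which must ensure each small perturbation does not destroy the Nirenberg behaviour arranged on the previously treated slices.
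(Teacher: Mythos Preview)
Your reduction to three-dimensional slices has a genuine obstruction already at step~2. For $L_0$ to be tangent to a $3$-submanifold $\Sigma$ through $p_0$, both $\mathrm{Re}\,L_0$ and $\mathrm{Im}\,L_0$ must be tangent, hence so are all their iterated brackets, and the involutive closure of the real rank-$2$ distribution $D=\mathrm{span}_{\mathbb{R}}(\mathrm{Re}\,L_0,\mathrm{Im}\,L_0)$ must have rank at most $3$ along $\Sigma$. For a generic $L_0$ in dimension $m\geq 4$ --- for instance $L_0=\partial_{x_1}+i(\partial_{x_2}+x_1\partial_{x_3}+x_1^2\partial_{x_4})$ --- this closure already has rank $4$ at $p_0$; that is a condition on the $2$-jet of $L_0$ and is therefore stable under $\mathcal{C}^h$-small perturbations once $h\geq 2$. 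Your phrase ``integral $3$-submanifold of an integrable small deformation of the plane distribution'' does not rescue this: an integrable rank-$2$ distribution has $2$-dimensional leaves, and no coordinate change can manufacture tangentiality that is absent geometrically. Even granting one slice, the iteration in step~5 is not bookkeeping: a Nirenberg bump adapted to a second slice $\Sigma'$ involving $\partial_{x_4}$ is transverse to $\Sigma$, so after that step $L$ is no longer tangent to $\Sigma$ and you can no longer restrict; and the property ``only constant local $\mathcal{C}^1$ solutions'' is not known to be open in any $\mathcal{C}^h$ topology, so a further small perturbation near $p_0$ may well destroy it.

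The paper's argument avoids slicing entirely. After a small perturbation making the coefficients real-analytic with $L_0,\bar L_0,[L_0,\bar L_0]$ independent, Cauchy--Kowalevski produces $m-1$ independent solutions $z_0,\ldots,z_k$ ($k=m-2$), giving a local $CR$-embedding into $\mathbb{C}^{k+1}$ arranged so that the fibres $M_\tau=\{z_1=\tau\}$ are $k$-spheres collapsing to $p_0$ as $\tau\to 0$. For any local solution $u$ of $L_0u=0$ the function $F(\tau)=\int_{M_\tau}u\,dz_0\wedge dz_2\wedge\cdots\wedge dz_k$ is holomorphic (Morera plus Stokes) and vanishes on the boundary curve, hence identically. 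One then perturbs $L_0$ by bumps $\psi_0,\ldots,\psi_k$ supported in the preimages of disjoint discs $D_\nu$ in the $\tau$-plane shrinking to $0$, each $\psi_i$ responsible for one coordinate direction. For a solution of the perturbed equation, $\oint_{\partial D_\nu}F\,d\tau=0$ together with Stokes and the mean-value theorem yields points converging to $p_0$ at which each individual partial of $u$ vanishes, whence $du(p_0)=0$. All $m$ directions are handled in a single stroke through the holomorphic parameter $z_1$; no slice compatibility ever arises.
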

\begin{proof}
We can argue on a small coordinate patch $\Omega$
about $p_0$, and then, substituting
$L_0$ by another vector field $L_0$ sufficiently close in the $h$-norm,
we can
assume that
the coefficients of $L_0$ are real analytic in the coordinates in $\Omega$,
and that
\begin{equation*}
  L_0(p),\;\bar{L}_0(p),\;[L_0,\bar{L}_0](p) \;
\text{are linearly independent in $\mathbb{C}T_pM$,$\forall{p}\in\Omega$}.
\end{equation*}
Let $k=m-2$.
By the real analyticity assumption, using the
Cauchy-Kowalevski theorem, and by shrinking $\Omega$ if needed,
we can find
$k+1$ complex valued real analytic $z_0,z_1,\hdots,z_m$ on $\Omega$ with
\begin{equation*}
  L_0z_i=0,\;\text{for $i=0,\hdots,k$},\quad dz_0\wedge d\bar{z}_0
\wedge dz_1\wedge\cdots\wedge dz_k
\neq{0} \quad \text{on}\;\Omega.
\end{equation*}
Let $x_i=\mathrm{Re}\,{z}_i$, $y_i=\mathrm{Im}\,{z}_i$.
We can also arrange
that $x_0,y_0,x_1,\hdots,x_k$ are real coordinates in $\Omega$ 
centered at $p_0$, and that $y_i(p_0)=0$,
$dy_i(p_0)=0$ for $i=1,\hdots,k$. This preparation yields a local
$CR$-embedding of $\Omega$ as a $CR$ submanifold of $CR$ dimension $1$
and $CR$ codimension $k$ in $\mathbb{C}^{k+1}$, given by
\begin{equation*}\begin{aligned}
 & y_i=h_i(z_0,x)\;\text{for}\; i=1,\hdots,k,\\
&\text{with}\;
x=(x_1,\hdots,x_k), \;\text{and}\; h_i=O(z_0\bar{z}_0+|x|^2).
\end{aligned}
\end{equation*}
After multiplication by a nowhere zero function, we can take
$L_0$ of the form
\begin{equation*}
  L_0=\frac{\partial}{\partial\bar{z}_0}+{\sum}_{i=1}^k{a_i}
\frac{\partial}{\partial{x}_i},\quad a_i\in\mathcal{C}^{\infty}(\Omega).
\end{equation*}
The condition that $[L_0,\bar{L}_0](p_0)\neq{0}$ implies that
$\partial^2h_i/\partial{z}_0\partial{\bar{z}_0}\neq{0}$ at $p_0$
for some index $i$. Moreover,
we note that we obtain new solutions of the homogeneous equation
$L_0u=0$ by taking for $u$ any holomorphic function of
$z_0,z_1,\hdots,z_k$. This allows us to use 
biholomorphic transformations
to obtain that
\begin{equation}\tag{$*$} \label{eq:bs}
  \text{the real Hessian of}\;  h_1(z_0,x)\;\text{is positive definite
in $\Omega$}.
\end{equation}
In this way, the sets $\Omega_r=\{p\in\Omega\mid \mathrm{Im}\,z_1<r\}$,
for $r>0$, form a fundamental system of open neighborhoods of
$p_0$ in $M$.
Set
\begin{equation*}
  M_{\tau}=\{p\in\Omega\mid z_1=\tau\},\;\text{for}\; \tau\in\mathbb{C}.
\end{equation*}
Then, by \eqref{eq:bs}, $M_0=\{p_0\}$, and
 there is an open 
connected neighborhood $\omega$ of $0$ in
$\mathbb{C}$ and a smooth real curve 
$\mathrm{Im}\,\tau=\phi(\mathrm{Re}\,\tau)$ in
$\omega$, passing through $0$,
with the properties
\begin{align}
  \tag{$i$}  & M_\tau\subset\Omega \;\text{if $\tau\in\omega$},\\
\tag{$ii$} & M_{\tau}=\emptyset \;\;
\text{if $\mathrm{Im}\,\tau<\phi(\mathrm{Re}\,\tau)$},\\ \label{eq:biii}
\tag{$iii$} & M_{\tau}=\{\text{a point}\}\;
\text{if $\mathrm{Im}\,\tau=\phi(\mathrm{Re}\,\tau)$},\\
\tag{$iv$} & M_{\tau}\simeq{S}^k\;
\text{if $\mathrm{Im}\,\tau>\phi(\mathrm{Re}\,\tau)$}.
\end{align}
Let $\{D_{\nu}\}$ be a sequence of pairwise disjoint 
closed discs in
\begin{displaymath}
 \omega^+=
\{\tau\in\omega\mid \mathrm{Im}\,\tau>\phi(\mathrm{Re}\,\tau)\},
\end{displaymath}
with centers and radii converging to $0$ for $\nu\to\infty$.
For a suitable $r_0>0$, all sets
\begin{displaymath}
  \omega^+_r=\{\tau\in\omega\mid \phi(\mathrm{Re}\,\tau)<\mathrm{Im}\,\tau
<r\}
\end{displaymath}
are connected, for $0<r<r_0$. Set
\begin{displaymath}
 \omega'_r=\omega^+_r\setminus{\bigcup}_{\nu}D_{\nu},\quad
 \Omega'_r=\{p\in\Omega\mid z_1(p)\in\omega'_r\}.
\end{displaymath}
Let $u$ be a $\mathcal{C}^1$ solution of $L_0(u)=0$ on
$\Omega'_r$,
 for some $0<r<r_0$. For each $\tau\in\omega'_r$ we define
\begin{equation*}
  F(\tau)=\int_{M_{\tau}}u \,dz_0\wedge dz_2\wedge\cdots\wedge dz_k.
\end{equation*}
We claim that $F$ is holomorphic in $\omega'_r$. 
Let indeed $\kappa$ be an arbitrary smooth simple closed curve in
$\omega_r$. Then ${\bigcup}_{\tau\in\kappa}M_{\tau}$ is the boundary
of a domain $N_{\kappa}$ in $\Omega$, that is diffeomorphic to
the Cartesian product of a $2$-disc and a $(k-1)$-ball, and
\begin{equation*}\begin{aligned}
\oint_{\kappa}F(\tau)d\tau&=
  \oint_{\kappa}d\tau\int_{M_{\tau}}\!\! u \,dz_0\wedge dz_2\wedge\cdots
\wedge dz_k 
=
\pm\int_{\partial{N}_{\kappa}}\! \! u \,dz_0\wedge dz_1\wedge
dz_2\wedge\cdots\wedge dz_k\\
&=
\pm\int_{N_{\tau}}du\wedge dz_0\wedge
dz_1\wedge\cdots\wedge dz_k=0,
\end{aligned}
\end{equation*}
because (see~\cite{hl56})
\begin{equation*}
  du\wedge{dz}_0\wedge{dz}_1\wedge\cdots\wedge{dz}_k=
(L_0u)\, d\bar{z}_0\wedge{dz}_0\wedge{dz}_1\wedge\cdots\wedge{dz}_k=0.
\end{equation*}
By Morera's theorem, $F$ is holomorphic on $\omega'_r$.
Moreover, $F$ extends to a continuous function on the closure of
$\omega_r'$ in $\omega\cap\{\mathrm{Im}\,\tau<r\}$, 
that equals $0$ for $\mathrm{Im}\,\tau=
\phi(\mathrm{Re}\,
\tau)$ because of \eqref{eq:biii}. It follows that
$F(\tau)=0$ for $\tau\in\omega'_r$. \par
For each $\nu\in\mathbb{N}$,
 let $Q_{\nu}=\{p\in\Omega\mid z_1(p)\in{D}_{\nu}\} $.
We fix smooth 
functions
$\psi_i$ in $\Omega$, such that $\psi_i dz_0\wedge d\bar{z}_0\wedge
\cdots\wedge{d}z_k$ is, for each $i=0,1,\hdots,k$, 
a non-negative real regular measure, with
\begin{equation*}
  \mathrm{supp}\,\psi_i={\bigcup}_{j=0}^{\infty}Q_{i+j(k+1)}
\end{equation*}
and such that, for
\begin{equation*}
  L=L_0+\psi_0\frac{\partial}{\partial{z}_0}+{\sum}_{i=1}^k
\psi_i\frac{\partial}{\partial{x}_i}
\end{equation*}
we have $\|L-L_0\|_h<\epsilon$.
Assume now that $u\in\mathcal{C}^{1}(\Omega_r)$
satisfies $Lu=0$. 
Hence, for all $\nu$ sufficiently large, $Q_{\nu}\subset\Omega_r$, and
\begin{equation*}\begin{aligned}
0&=\pm\oint_{\partial{D}_{\nu}}d\tau{\int}_{M_{\tau}}\!\! u\,
{dz}_0\wedge{dz}_2\wedge\cdots\wedge{dz}_k=
\int_{\partial{Q}_{\nu}}u\, dz_0\wedge{d}z_1\wedge\cdots\wedge
dz_k
\\ & 
=
\! \int_{Q_{\nu}}\!\!(L_0u)dz_0\wedge d\bar{z}_0
\wedge dz_1\wedge\cdots\wedge dz_k 
=\! \int_{Q_{\nu}}\!\!((L_0-L)u)dz_0\wedge d\bar{z}_0
\wedge dz_1\wedge\cdots\wedge dz_k
\end{aligned}
\end{equation*}
implies, by the mean value theorem, that, for all large $i\in\mathbb{N}$,
there are points $p_i,p'_i\in{Q}_j$ such that, for large $j$,
\begin{equation*}\begin{cases}
   \re\dfrac{\partial{u}(p_{j(1+k)})}{\partial{z}_0}=0,\;\;
\im\dfrac{\partial{u}(p'_{j(1+k)})}{\partial{z}_0}=0,
\\[8pt]
\re\dfrac{\partial{u}(p_{i+j(k+1)})}{\partial{x}_i}=0,\;\;
\im\dfrac{\partial{u}(p'_{i+j(k+1)})}{\partial{x}_i}=0,
\; \text{for $i=1,\hdots,k$}.
\end{cases}
\end{equation*}
By passing to the limit, as $p_j\to{p}_0$, 
we obtain that
\begin{equation*}
 \frac{\partial{u}(p_0)}{\partial{z}_0}=0,\;\;
\frac{\partial{u}(p_0)}{\partial{x}_i}=0,
\; \text{for $i=1,\hdots,k$}. 
\end{equation*}
Together with $Lu(p_0)=0$, this yields $du(p_0)=0$.
\end{proof}
\begin{proof}[Proof of Theorem \ref{thm:b1}]
We fix a Riemannian metric on $M$, so that we can compute the
length of vectors and covectors and the $\mathcal{C}^{h}$-norms
of functions defined on subsets of $M$. 
Then we have seminorms which endow $\mathfrak{X}(M)$ with a
Fr\'echet space topology, and we may discuss Baire category.
Let $\{U_{\nu}\}_{\nu\in\mathbb{N}}$ 
be a countable basis of non empty open subsets of $M$,
and for each $\nu\in\mathbb{N}$ fix a point  $p_{\nu}\in{U}_{\nu}$.
For $h\in\mathbb{N}$ we
define $\mathfrak{E}(\nu,h)$ to be the closure
in $\mathfrak{X}^{\mathbb{C}}(M)$ of the set of
$L$ such that
\begin{equation}\label{eq:b3}
  \exists u\in\mathcal{C}^{1+\tfrac{1}{h}}(U_{\nu})\;\text{with}\;
  \begin{cases}
    Lu=0\quad\text{on}\; U_{\nu},\\
\|u\|_{{1+\tfrac{1}{h}},\, U_{\nu}}\leq{h},\\
|du(p_{\nu})|\geq \frac{1}{h}.
  \end{cases}
\end{equation}
The set $\mathfrak{E}(\nu,h)$ has an
empty interior. 
This can be proved by contradiction. If some
$\mathfrak{E}(\nu,h)$ had an interior point, by Lemma \ref{lem:b2}
it would contain an interior point
$L$ satisfying \eqref{eq:b2} with $p_0=p_{\nu}$.
By definition, there is a sequence $\{L_j\}_{j\in\mathbb{N}}$
with $L_j\to{L}$ in $\mathfrak{X}^{\mathbb{C}}(M)$ for $j\to\infty$
such that for each $j$, there is 
$u_j\in\mathcal{C}^{1+\tfrac{1}{h}}(U_{\nu})$ with
$L_ju_j=0$ on $U_{\nu}$, 
$\|u_j\|_{1+\frac{1}{h},\,U_{\nu}}\leq{h}$ and
$|du(p_{\nu})|\geq\frac{1}{h}$.
By the Ascoli-Arzel\`a theorem, passing to a subsequence
we can assume that $u_j\to{u}\in\mathcal{C}^{1+\frac{1}{h}}(U_{\nu})$,
uniformly with their first derivatives on every compact neighborhood
of $p_{\nu}$ in $U_{\nu}$. Then $Lu=0$ on $U_{\nu}$, and
$|du(p_\nu)|\geq\frac{1}{h}>0$ contradicts \eqref{eq:b2}.
 \par
Therefore the union ${\bigcup}_{\nu,h}\mathfrak{E}(\nu,h)$ is 
a countable union of closed subsets having empty interior,
hence of first Baire category. Then also $\mathfrak{E}$ is
of first Baire category, because
$\mathfrak{E}\subset{\bigcup}_{\nu,h}\mathfrak{E}(\nu,h)$.
This completes the proof of the Theorem.
\end{proof}
\section{Involutive systems and $CR$ manifolds}\label{sec:d}
To extend the result of \S\ref{sec:b} 
to overdetermined systems of homogeneous first order p.d.e.'s,
we will develop ideas from \cite{H88, H91, J86}. 
In this section we begin by
describing the general framework. In the following, $M$ will denote
a $\mathcal{C}^{\infty}$-smooth manifold of real dimension $m$.
\subsection{Generalized complex distributions and $CR$ structures}
Let ${\Zt}$ be a generalized
distribution of smooth complex vector fields
on $M$. This means that ${\Zt}$ defines, for each open subset
$U$ of $M$ a, 
$\mathcal{C}^{\infty}(U)$ submodule
of $\mathfrak{X}^{\mathbb{C}}(U)$, in such a way that the assignment
$U\to{\Zt}(U)$ is a sheaf:
\begin{enumerate}
\item If $U^{\mathrm{open}}\subset{V}^{\mathrm{open}}\subset{M}$, 
then $Z|_{U}\in{\Zt}(U)$ for
all $Z\in{\Zt}(V)$;
\item If $\{U_{\nu}\}$ is a family of open subsets of $M$, a smooth 
complex vector field
$Z$, defined on ${\bigcup}_{\nu}U_{\nu}$, belongs to
$\Zt({\bigcup}_{\nu}U_{\nu})$ if and only if $Z|_{U_\nu}\in\Zt(U_{\nu})$
for all~$\nu$.
\end{enumerate}
\par
Our main interest in the sequel will be focused on the \textsl{local}
solutions 
to the homogeneous
system
\begin{equation}
  \label{eq:d1}
  Zu=0,\quad\forall Z\in{\Zt}.
\end{equation}
It is therefore natural to assume
in the following that
${\Zt}{}$ is \emph{involutive}, or \emph{formally integrable}. 
This means that
\begin{equation}
  \label{eq:d2}
  [Z_1,Z_2]\in{\Zt}(U),\;\forall Z_1,Z_2\in{\Zt}(U),
\quad\forall{U}^{\mathrm{open}}\subset{M}.
\end{equation}
Since ${\Zt}$ is a fine sheaf, every germ $Z_{(p)}$ of
${\Zt}$ at a point $p\in{M}$ is the restriction  of a
global $Z\in{\Zt}(M)$. Thus we can for simplicity
utilize global sections $Z\in{\Zt}(M)$ in most
of the discussion below.\par
For each point $p\in{M}$, we consider the set
\begin{equation}
  \label{eq:d3}
  \mathbf{Z}_pM=\{Z(p)\mid Z\in{\Zt}(M)\}\subset\mathbb{C}T_pM.
\end{equation}
If the dimension of the $\mathbb{C}$-linear space $ \mathbf{Z}_pM$
is constant, we say that ${\Zt}$ is a \emph{distribution} of
complex vector fields. 
\begin{dfn} A \emph{$CR$ structure} on $M$ is the datum of an 
involutive distribution
${\Zt}$ of smooth complex vector fields with ${\Zt}\cap\overline{{\Zt}}=
{{}}{0}$.
\par
The constant dimension $n$ of $\mathbf{Z}_pM$ is its
$CR$ dimension, and $k=m-2n$ 
its $CR$ codimension. We call the pair $(n,k)$ the \emph{type} of 
the $CR$ manifold~$M$.
\end{dfn}
In the case where $\Zt$ is a $CR$ structure on $M$, we 
write sometimes $T^{0,1}M$ for~$\mathbf{Z}M$.\par\smallskip
When $M$ is a real smooth submanifold of a complex manifold $\mathbb{X}$,
we consider on $M$ the generalized distribution
\begin{equation*}
  \Zt(U)=\{Z\in\mathfrak{X}^{\mathbb{C}}(U)\mid Z_p\in{T}^{0,1}_p\mathbb{X},\;
\forall p\in{U}\},
\end{equation*}
where ${T}^{0,1}\mathbb{X}$ is the bundle of 
anti-holomorphic complex tangent vectors to 
$\mathbb{X}$. Then $\Zt$ is involutive and ${\Zt}\cap\overline{{\Zt}}=
{{}}{0}$. When $\Zt$ has constant rank, 
$\Zt$ defines
a $CR$ structure on $M$, for which we say that $M$ is a \emph{$CR$-submanifold}
of $\mathbb{X}$. \par
Let $1\leq{a}\leq\infty$. A 
\emph{complex $CR$-immersion} of class $\mathcal{C}^a$ of
$M$ 
is a  $\mathcal{C}^a$-smooth immersion
$\phiup:M\to\mathbb{X}$ of $M$ into a complex manifold $\mathbb{X}$ 
with $d\phiup(\mathbf{Z}_pM)\subset{T}^{0,1}_{\phiup(p)}\mathbb{X}$
for all $p\in{U}$.

\par\smallskip
For any open set $U$ of $M$ we set
\begin{equation}
\Otm(U)=\{u\in \mathcal{C}^1(U)\mid
Zu=0,\;\forall Z\in{\Zt}(M)\}. 
\end{equation}
The assignment $U^{open}\to\Otm(U)$
defines a sheaf of
rings of germs of complex valued differentiable functions on $M$.
\subsection{The differential  ideal and complete integrability}
Let $\varOmega^*_M={\bigoplus}_{0\leq{p}\leq{m}}\varOmega^p_M$ 
be the sheaf of germs of alternated 
smooth differential forms on $M$. We associate to $\Zt$ the 
\emph{differential ideal}
\begin{equation}
  \label{eq:33} \Jtm=
{\bigoplus}_{p\geq{1}}\Jt^p_M 
\;\;\text{with}\;\;
\Jt_M^p=
\{\etaup\in\varOmega^p_M\mid \etaup|_{\Zt(M)}=0\}.
\end{equation}
This is a graded ideal sheaf of $\varOmega^*_M$, generated by its
elements of degree $1$. 
Being interested in the local solutions to \eqref{eq:d1},
we can assume that $\Jtm$ is \emph{complete} and that 
$\Zt$ is the \emph{characteristic system} of 
$\Jtm$, 
i.e. that \begin{align*}
\Zt(U)&=\{Z\in\mathfrak{X}^{\mathbb{C}}(M)\mid Z\rfloor\Jtm(U)\subset\Jtm(U)\}
\\
&=\{Z\in\mathfrak{X}^{\mathbb{C}}(U)\mid \etaup(Z)=0,\;\forall
\etaup\in\Jt_M^1(U)\}, \quad\forall{U}^{\text{open}}\subset{M}.
\end{align*}
If $\Zt$ is a distribution, it is the characteristic system of its differential
ideal. The pointwise evaluation of the elements of $\Jt_M^1$
yields in this case a smooth subbundle $\mathbf{Z}^0M$ of $\mathbb{C}T^*M$,
given by
\begin{equation}
  \label{eq:d4a}
  \mathbf{Z}^0M={\bigsqcup}_{p\in{M}} \mathbf{Z}^0_pM,\;\;\text{with}\;\;
 \mathbf{Z}^0_pM=\{\etaup\in\mathbb{C}T^*_pM\mid \etaup(Z)=0
\;\forall Z\in{\Zt}(M)\}.
\end{equation}
In general, \eqref{eq:d4a} defines a subset of the complexified tangent bundle
of $M$. 
\begin{dfn} Let ${\Zt}$ be a generalized
distribution of smooth complex vector fields on $M$ and $p_0\in{M}$. 
We say that ${\Zt}$ is 
\emph{completely integrable} at $p_0$ if
\begin{equation}
  \label{eq:d5}
  \forall \etaup\in\mathbf{Z}_{p_0}^0M
\quad 
\exists u\in\Otm_{(p_0)}\;\;
\text{with}\;\; du(p_0)=\etaup.
\end{equation}
\end{dfn} 
This means that \eqref{eq:d1} has at $p_0$ the largest number of
differentially independent local solutions that is permitted by the rank
of ${\Zt}$.
\subsection{The case of $CR$ manifolds}
Let ${\Zt}$ be a $CR$ structure of type $(n,k)$ on $M$.
Complete integrability at $p_0\in{M}$ 
is equivalent to the existence
of a complex $CR$-immersion of class $\mathcal{C}^1$ of 
an open neighborhood $U$ of $p_0$ 
into $\mathbb{C}^{n+k}$. 
\par
The question of the regularity of complex
$CR$-immersions seems in general a rather
delicate open problem (see e.g.
\cite{MM94}). Note that any $\mathcal{C}^1$-immersion is in fact 
$\mathcal{C}^{\infty}$-smooth
when $M$ satisfies suitable pseudo-concavity assumptions
(see \cite{AHNP}).\par
For $\mathcal{C}^{\infty}$-smooth complex local $CR$-immersions 
we introduce a special notation.
\begin{defn}
A $CR$-chart on $M$ is the datum of an open subset $U$  and 
of $n+k$ smooth $CR$ functions 
$z_1,\hdots,z_{n+k}\in\Otm(U)\cap\mathcal{C}^{\infty}(U)$, 
such that
\begin{equation*}
  dz_1(p)\wedge\cdots\wedge{dz}_{n+k}(p)\neq{0},\;\forall{p}\in{U}.
\end{equation*}
\end{defn}
Clearly $\phiup(p)=(z_1(p),\hdots,z_{n+k}(p))$ provides in this case
a $\mathcal{C}^{\infty}$-smooth $CR$-immersion of $U$ in $\mathbb{C}^{n+k}$.
\par
The functions 
$z_1,\hdots,z_{n+k}$ of a $CR$-chart are 
not independent complex coordinates
when $k>0$. For each point $p_0$ of $U$ there are indeed $k$ real valued
functions $\rho_1,\hdots,\rho_k$, 
defined and $\mathcal{C}^{\infty}$ 
on an open neighborhood $G$ of $\phiup(p_0)$ in
$\mathbb{C}^{n+k}$, with $\rho_i(z_1,\hdots,z_{n+k})=0$ on a neighborhood
of $p_0$, and 
$\partial\rho_1(\phiup(p_0))\wedge\cdots\wedge\partial\rho_k(\phiup(p_0)) 
\neq{0}$. 
\begin{defn}
We say that a $CR$ manifold $M$ is \emph{locally $CR$-embeddable} if 
the open subsets $U$ of its $CR$-charts make a covering.  
\end{defn}
\par
Locally $CR$-embeddable $CR$ manifolds can be abstractly defined 
as ringed spaces, using the structure sheaf 
$\Ot^{\infty}_M={\Ot}_M\cap\mathscr{C}^{\infty}$
of the germs of its smooth $CR$ functions. 
\begin{lem}\label{lem25}
Let $M$ be a $CR$ manifold of type $(n,k)$ and $p_0\in{M}$. 
Then we can find an open neighborhood $U$ of $p_0$ in $M$
and a new $CR$ structure on $M$ which is locally $CR$-embeddable
and agrees to infinite order with the original one at $p_0$.
\end{lem}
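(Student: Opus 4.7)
The strategy is to build $n+k$ smooth functions near $p_0$ that are $CR$ to infinite order for the original structure $\Zt$, and to use them to define a new $CR$ structure for which they are honest $CR$ functions. Work in a coordinate chart $(\Omega,x)$ centered at $p_0$, fix a smooth frame $L_1,\dots,L_n$ of $\mathbf{Z}M|_\Omega$ (with $[L_j,L_\ell]=\sum_h c^h_{j\ell}L_h$ by involutivity), and choose a basis $\omega_1,\dots,\omega_{n+k}$ of $\mathbf{Z}^0_{p_0}M\subset\mathbb{C}T^*_{p_0}M$.

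\medskip

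The crucial step is to construct formal power series $\hat z_i\in\mathbb{C}[[x_1,\dots,x_m]]$ with $d\hat z_i(p_0)=\omega_i$ and $L_j\hat z_i=0$ in $\mathbb{C}[[x]]$, for $i=1,\dots,n+k$ and $j=1,\dots,n$. This is a formal Frobenius / Cauchy--Kowalevski construction for the involutive system: one extends $\hat z_i$ degree by degree, and at each order the linear system for the new homogeneous terms is consistent precisely because of the commutator identities above. Borel's lemma then yields smooth functions $\tilde z_1,\dots,\tilde z_{n+k}$ on a neighborhood of $p_0$ whose Taylor expansions at $p_0$ coincide with $\hat z_1,\dots,\hat z_{n+k}$; in particular each $L_j\tilde z_i$ is flat at $p_0$, and $d\tilde z_i(p_0)=\omega_i$.

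\medskip

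Since $\omega_1,\dots,\omega_{n+k},\bar\omega_1,\dots,\bar\omega_{n+k}$ span $\mathbb{C}T^*_{p_0}M$ (because $\Zt$ is a $CR$ structure at $p_0$), by continuity the same holds for $d\tilde z_1,\dots,d\tilde z_{n+k},d\bar{\tilde z}_1,\dots,d\bar{\tilde z}_{n+k}$ on a smaller neighborhood $U$ of $p_0$. Setting
\begin{equation*}
\tilde{\Zt}(V)=\{Z\in\mathfrak{X}^{\mathbb{C}}(V)\mid Z\tilde z_i=0,\ i=1,\dots,n+k\},\quad V^{\mathrm{open}}\subset U,
\end{equation*}
defines a rank-$n$ involutive sub-bundle with $\tilde{\Zt}\cap\overline{\tilde{\Zt}}=0$, i.e., a $CR$ structure of type $(n,k)$ on $U$; the $\tilde z_i$ form a global $CR$-chart for it, making $(U,\tilde{\Zt})$ locally $CR$-embeddable. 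Because $L_j\tilde z_i$ is flat at $p_0$, the annihilator bundles $\mathbf{Z}^0M$ and $\tilde{\mathbf{Z}}^0M$ have identical Taylor series there, so $\tilde{\Zt}$ agrees with $\Zt$ to infinite order at $p_0$.

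\medskip

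The main obstacle is the formal construction above: one must set up carefully the induction on the degree of the jet and verify that the compatibility conditions at each order are precisely those supplied by the formal Cartan relations for $\{L_j\}$. The remaining steps (Borel's lemma, openness, and the standard dictionary between involutive sub-bundles of the appropriate type and $CR$-charts) are then routine.
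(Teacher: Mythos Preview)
Your proposal is correct and follows essentially the same approach as the paper: construct formal power series solutions of the tangential Cauchy--Riemann system at $p_0$ by a degree-by-degree induction whose compatibility at each step comes from the involutivity of $\Zt$, then realize them as smooth functions via Borel and take the new $CR$ structure to be their common annihilator. The paper is slightly more explicit about the induction: it first normalizes coordinates so that the generators have the form $Z_i=\partial_{x_i}+i\partial_{x_{i+n}}+R_i$ with $R_i=O(|x|)$, so that the linearized system at each order is the standard $\bar\partial$ on polynomials (for which the compatibility $L_ig_{j,d}=L_jg_{i,d}$, deduced from $[Z_i,Z_j]=0$, guarantees solvability); this is exactly the step you flagged as the ``main obstacle'' and left to be filled in.
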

\begin{proof} Let $\Zt$ be the $CR$ structure on $M$. 
It suffices to consider smooth functions $z_1,\hdots,z_{\nuup}$
which are defined on a neighborhood of $p_0$, satisfy
$Zz_j=0^{\infty}$ at $p_0$, and have $dz_1(p_0)\wedge\cdots\wedge
dz_{\nuup}(p_0)\neq{0}$.  
To prove the existence of such functions, we observe that it is
always possible to find a smooth coordinate chart 
$(U,x_1,\hdots,x_m)$ centered at $p_0$ such that $\Zt$ is generated
in $U$ by vector fields of the form
\begin{equation*}
  Z_i=\dfrac{\partial}{\partial{x}_i}+i\dfrac{\partial}{\partial{x}_{i+n}}+
{\sum}_{j=n+1}^ma_j(x)\dfrac{\partial}{\partial{x}_j},\quad
\text{with $a_j(x)=O(|x|)$}.
\end{equation*}
Let $L_i=\dfrac{\partial}{\partial{x}_i}+i\dfrac{\partial}{\partial{x}_{i+n}}$,
and $R_i={\sum}_{j=n+1}^ma_j(x)\dfrac{\partial}{\partial{x}_j}$. \par
We denote by $\mathfrak{m}$ the maximal ideal of 
the local
ring $\mathbb{C}\{\{x_1,\hdots,x_m\}\}$ of formal power series of
$x_1,\hdots,x_m$. We obtain
formal power series solution 
to \eqref{eq:d1} by constructing by recurrence 
sequences $\{f_h\}_{h\geq{0}}\subset\mathbb{C}\{\{x_1,\hdots,x_m\}\}$ 
which  solve the equations
\begin{equation}\tag{$*$} \label{star}
  \begin{cases}
f_h\in\mathfrak{m}^h,\\
    L_jf_1\in\mathfrak{m}, &\text{for $j=1,\hdots,n$},\\
L_jf_{h+1}+R_jf_h\in\mathfrak{m}^{h+1},&\text{for $j=1,\hdots,n$}.
  \end{cases}
\end{equation}
We observe that, taking $f_1$ equal to 
$x_i+ix_{i+n}$ for $i=1,\hdots,n$, or to 
$x_{2n+i}$, for $i=1,\hdots,k$, we obtain $\nuup$ independent solutions
of $L_if_1=0$ for $1\leq{i}\leq{n}$. \par
Assume now that $d\geq{1}$ and 
$f_d\in\mathfrak{m}^d$ satisfies
\begin{equation*}
  L_if_d+R_if_{d-1}\in\mathfrak{m}^d,\quad\text{for}\quad 1\leq{i}\leq{n}.
\end{equation*}
The integrability conditions yield
$[Z_i,Z_j]=0$ for $1\leq{i,j}\leq{n}$. Hence we obtain
\begin{equation}\tag{$**$} \label{sstar}
  0=[Z_i,Z_j]f_d=-L_iR_jf_d+L_jR_if_d+[R_i,R_j]f_d.
\end{equation}
We have $R_if_d\in\mathfrak{m}^d$, and hence there is a 
polynomial $g_{i,d}\in\mathbb{C}[x_1,\hdots,x_m]$, 
homogeneous of degree $d$, such that
$R_if_d-g_{i,d}\in\mathfrak{m}^{d+1}$. 
Since $[R_i,R_j]f_d\in\mathfrak{m}^{d+1}$,  
we obtain from \eqref{sstar} that
$L_ig_{j,d}=L_jg_{i,d}$ for all $1\leq{i,j}\leq{n}$
and therefore there is a polynomial $f_{d+1}\in\mathbb{C}[x_1,\hdots,x_m]$, 
homogeneous of
degree $d+1$, such that $L_if_{d+1}=g_{i,d}$ for $i=1,\hdots,n$.
The series ${\sum}f_d$ of the terms of a sequence
$\{f_d\}$ solving \eqref{star} is a formal power series solution
of \eqref{eq:d1}. \par
In particular, we can find solutions
$\{z_1\},\hdots,\{z_{\nuup}\}\in\mathbb{C}\{\{x_1,\hdots,x_m\}\}$ 
to \eqref{eq:d1} with $d\{z_i\}(0)=dx_i(0)+idx_{i+n}(0)$ for $i=1,\hdots,n$
and $d\{z_i\}(0)=dx_{n+i}(0)$ for $i=n+1,\hdots,\nuup$. It suffices then
to take smooth functions $z_1,\hdots,z_{\nuup}$ having Taylor series
$\{z_1\},\hdots,\{z_{\nuup}\}$ at $0$.
\end{proof}

\subsection{Characteristic bundle and Levi form}
\cite{N84}
The underlying real distribution and the characteristic bundle
of ${\Zt}$ are:
\begin{align}\label{eq:d6}
  &\mathcal{H}=\{\mathrm{Re}\,Z\mid Z\in{\Zt}\},\quad
\text{i.e}\quad 
\mathcal{H}(U)=\{\mathrm{Re}\,Z\mid Z\in{\Zt}(U)\},
\;\forall U^{\mathrm{open}}\subset{M},
\\\label{eq:d7}
& H^0M =\{\xi\in{T}^*M\mid \xi(X)=0,\;\forall X\in
\mathcal{H}(M)\}.
\end{align}
To each characteristic covector $\xi_0\in{H}^0_{p_0}M$
we associate a
Hermitian symmetric form on $\mathbf{Z}_{p_0}M$, by
\begin{equation}
  \label{eq:d8}
  \mathfrak{L}_{\xi_0}(Z_1,Z_2)=i\xi_0([Z_1,\bar{Z}_2]),
\quad\forall Z_1,Z_2\in{\Zt}(M).
\end{equation}
In fact a straightforward verification shows that the value of the
right hand side of \eqref{eq:d8} only depends  
on $Z_1(p_0),Z_2(p_0)\in\mathbf{Z}_{p_0}M$. \par
Moreover, $\mathfrak{L}_{\xi_0}(Z_1,Z_2)=0$ if 
one of the two vector fields is real valued
on a neighborhood of $p_0$. 
Thus $\mathfrak{L}_{\xi_0}$ 
defines a Hermitian symmetric
form on the quotient of $\mathbf{Z}_{p_0}M$ by 
the subspace
$\mathbf{N}_{p_0}M=\{Z(p_0)\mid Z\in{\Zt}(M)\cap\overline{{\Zt}(M)}\}$,
consisting of the values at $p_0$ of the complex multiples of the
real vector fields
in ${\Zt}(M)$. 
Set
\begin{equation}
  \label{eq:d9}
  \check{\mathbf{Z}}_{p_0}M=
\mathbf{Z}_{p_0}M/\mathbf{N}_{p_0}M.
\end{equation}
If $\xi_0\in{H}^0_{p_0}M$, then \eqref{eq:d8} defines a Hermitian symmetric form
$\mathbf{L}_{\xi_0}$ on $\check{\mathbf{Z}}_{p_0}M$, that we call
the \emph{Levi form} of ${\Zt}$ at $\xi_0$.
\begin{dfn} Let $p_0\in{M}$ and $\xi_0\in{H}^0_{p_0}M$.
We say that ${\Zt}$ is $q$-pseudoconvex at $\xi_0$
if $\mathbf{L}_{\xi_0}$ 
is nondegenerate 
 and has 
exactly $q$ positive eigenvalues on $\check{\mathbf{Z}}_{p_0}M$. \par
If ${\Zt}$ is $1$-pseudoconvex at 
some $\xi_0\in{H}^0_{p_0}M$, we say that
${\Zt}$ is \emph{Lorentzian} at $p_0$.
\end{dfn}
If ${\Zt}(M)$ is generated by a single
vector field $L$ near $p_0$, the condition of being Lorentzian at
$p_0$ means that $L(p_0)$, $\bar{L}(p_0)$, and $[L,\bar{L}](p_0)$ 
are linearly independent in $\mathbb{C}T_{p_0}M$. 
\subsection{Reduction of complete integrability
to the case of $CR$ manifolds}
When $\mathbf{N}_pM$ has constant dimension on a neighborhood $U$ of
$p_0\in{M}$, then the real vector fields in $\Zt(U)$ 
define an involutive 
distribution $\mathscr{N}$ 
of \textit{real} vector fields on $U$. By the Frobenius theorem, there is 
an open  neighborhood $W$ of $p_0$ in $U$ and a smooth fibration
$\piup:W\to{B}$ of $W$ such that $B$ is a smooth manifold and
the fibers of $\piup$ are integral submanifolds of $\mathscr{N}$.
One easily proves 
\begin{lem}
 There is a $CR$ structure $\Zt'$ on $B$ such that for every $p\in{W}$ 
 we have $\Ot_{\! M,(p)}=\piup^*\Ot_{\! B,(\piup(p))}$, and $\Zt$ is completely
 integrable at $p\in{W}$ if and only if $\Zt'$ is completely integrable at 
 $\piup(p)$.
\end{lem}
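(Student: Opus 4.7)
The natural plan is to take $\Zt'$ to be the pushforward of $\Zt$ under $\piup$. Since $\mathscr{N}$ consists of the real vector fields in $\Zt$, one has $\mathscr{N}\otimes\mathbb{C}=\Zt\cap\overline{\Zt}$, so $\ker d\piup_p=\mathbf{N}_pW$ and $d\piup$ induces a fiberwise isomorphism $\mathbb{C}T_pW/\mathbf{N}_pW\xrightarrow{\sim}\mathbb{C}T_{\piup(p)}B$. It then makes sense to define
\[
\mathbf{Z}'_qB:=d\piup_p\bigl(\mathbf{Z}_pW\bigr)\subset\mathbb{C}T_qB,\qquad p\in\piup^{-1}(q).
\]
I would first check that this is independent of $p$: the fiber is a leaf of $\mathscr{N}\subset\Zt$, and for any real $X\in\mathscr{N}$ and any $Z\in\Zt$ one has $[X,Z]\in\Zt$ by involutivity of $\Zt$, so the flow of $X$ preserves $\Zt$ as a subbundle, whence $d\piup(\mathbf{Z}_pW)$ is constant along the fiber. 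Next I would verify $\Zt'\cap\overline{\Zt'}=0$: if $d\piup(Z(p))=d\piup(\overline{Y}(p))$ with $Z,Y\in\Zt$, then $Z(p)-\overline{Y}(p)\in\mathbf{N}_pW\subset\overline{\Zt}$, so $Z(p)\in\Zt\cap\overline{\Zt}=\mathbf{N}_pW$ and its $d\piup$-image is zero. Smoothness, involutivity, and constant rank of $\Zt'$ are all inherited from those of $\Zt$ via the submersion.

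Next I would establish the identity $\Ot_{M,(p)}=\piup^*\Ot_{B,(\piup(p))}$. For the inclusion $\supseteq$, any CR function $f$ near $\piup(p)$ pulls back to $u=\piup^*f$ with $Zu(p')=df_{\piup(p')}\bigl(d\piup(Z(p'))\bigr)=0$ because $d\piup(Z)$ is a section of $\Zt'$ and $f\in\Ot_B$. Conversely, if $u\in\Ot_{M,(p)}$, then $Xu=0$ for every real $X\in\mathscr{N}$, so (after shrinking so the fibers are connected) $u$ is constant on fibers and descends to $u=\piup^*f$; the same pointwise identity, read backwards, yields $Z'f=0$ for every local section $Z'$ of $\Zt'$, hence $f\in\Ot_B$.

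From these two facts the equivalence of complete integrability is almost automatic. Any $\etaup\in\mathbf{Z}^0_pM$ annihilates $\mathbf{N}_pW=\ker d\piup_p$, so factors uniquely as $\etaup=\piup^*\etaup'$ with $\etaup'\in\mathbb{C}T^*_{\piup(p)}B$, and the condition $\etaup(Z)=0$ for all $Z\in\Zt$ translates into $\etaup'(Z')=0$ for all $Z'\in\Zt'$; this yields a bijection between characteristic covectors at $p$ and at $\piup(p)$. Since $d(\piup^*f)(p)=\piup^*df(\piup(p))$, realizing $\etaup=du(p)$ by $u\in\Ot_{M,(p)}$ is literally the same as realizing $\etaup'=df(\piup(p))$ by $f\in\Ot_{B,(\piup(p))}$. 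The one step requiring the most care is the verification in the first paragraph that the pushforward $\Zt'$ is a bona fide CR structure—well-defined along fibers, of constant rank, and satisfying $\Zt'\cap\overline{\Zt'}=0$; once that is in place, the remainder is a direct translation through~$\piup$.
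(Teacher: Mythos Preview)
The paper does not actually give a proof of this lemma; it merely prefaces the statement with ``One easily proves'' and leaves the details to the reader. Your argument is a correct and natural filling-in of those details: defining $\Zt'$ as the pushforward, checking well-definedness along fibers via the flow of $\mathscr{N}\subset\Zt$, verifying $\Zt'\cap\overline{\Zt'}=0$, and then reading off the correspondence of $\Ot$-germs and characteristic covectors through $\piup$ is exactly the standard route, and each step is sound.
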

\section{Involutive systems which are not completely
integrable at $p_0$}\label{sec:3x}
In this section, we 
give a weak generalization 
 of the results of \S\ref{sec:b} 
to Lorentzian $CR$ manifolds $M$ with arbitrary $CR$-codimension $k\geq{1}$
and $CR$-dimension $n\geq{2}$. We recall that 
$m=\dim_{\mathbb{R}}M=2n+k$, and we set $\nuup=n+k$.\par
We closely follow the arguments of \S\ref{sec:b}.\par
Assume that $M$ is locally $CR$-embeddable and Lorentzian at $p_0$.  
 Then there is
a $CR$-chart  $(U,z_1,\hdots,z_{\nuup})$ centered at $p_0$, with 
$dz^i(p_0)$ real for $i=n+1,\hdots,\nuup$, and 
\begin{equation}
  \label{eq:51a}
  \im{z}_{\nuup}+z_{\nuup}\bar{z}_{\nuup}+
{\sum}_{i=1}^{n-1}z_j\bar{z}_j={\sum}_{i=n}^{\nuup-1}z_j\bar{z}_j
+O(|z|^3)\quad\text{on $U$}.
\end{equation}
By shrinking, we get 
${\sum}_{i=n}^{\nuup-1}
z_i\bar{z}_i\geq \tfrac{1}{2}\im{z}_{\nuup} \;$ on $U$.\par
We consider the map $\piup:U\ni{p}\to w=(z_{1}(p),\hdots,z_{n-1}(p),z_{\nuup}(p))
\in\mathbb{C}^n$. By a further shrinking, we can assume 
that there is an open ball $B\subset\mathbb{C}^n$, centered at $0$, 
such that
\begin{list}{-}{}
\item $\piup(U)=\bar{\omega}$, with $B\setminus\omega$ 
strictly convex, 
and $\partial\omega\cap{B}$ smooth;
\item if $\im{\tauup}\geq{0}$, 
then $\{w\in{B}\mid \im{w}_n=\tauup\}\subset\omega$; 
\item for all $w\in\omega$ the set $M_{w}=\piup^{-1}(w)$ is diffeomorphic
to the sphere $S^k$;
\item for $w\in\partial\omega\cap{B}$ the set $M_{w}=\piup^{-1}(w)$ is a point. 
\end{list}
As in \S\ref{sec:b}, we have:
\begin{lem}\label{lem51}
If $u\in\Otm(U)$, then
\begin{equation}
  \label{eq:52}
  F(w)=\int_{M_w}u\, dz_n\wedge\cdots\wedge{dz}_{\nuup-1}=0,\;\;
\forall w\in\omega.
\end{equation}
\end{lem}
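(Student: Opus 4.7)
The plan is to mirror the strategy of the analogous step in the proof of Lemma~\ref{lem:b2}, splitting the argument into three stages: (i) $F$ is continuous on $\omega$, extends continuously to $\bar\omega \cap B$, and vanishes on $\partial\omega \cap B$; (ii) $F$ is holomorphic on $\omega$; (iii) a one-variable boundary uniqueness argument then forces $F \equiv 0$. Stage (i) is routine: the fibers $M_w \simeq S^k$ vary smoothly with $w \in \omega$, so $F$ is continuous there, and as $w \to \partial\omega \cap B$ the fiber collapses to a single point, so that the integral of the positive-degree form $u\, dz_n \wedge \cdots \wedge dz_{\nuup-1}$ tends to $0$.

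The technical heart is stage (ii). By Hartogs' theorem it is enough to prove separate holomorphy in each of the $n$ coordinates $w_i$. I fix $i$, freeze $w_j = w_j^0$ for $j \neq i$, and pick a small Jordan curve $\kappa$ in the two-real-dimensional complex slice $S_i = \{w \in \omega : w_j = w_j^0,\ j \neq i\}$, bounding a disc $D \subset S_i$. Setting $N_\kappa = \piup^{-1}(D) \subset U$, the boundary $\partial N_\kappa = \bigcup_{w \in \kappa} M_w$ is a closed $(k+1)$-real-dimensional submanifold, and Fubini combined with Stokes gives
\begin{equation*}
\oint_\kappa F(w)\, dw_i \;=\; \pm\int_{\partial N_\kappa} u\, dw_i \wedge dz_n \wedge \cdots \wedge dz_{\nuup-1} \;=\; \pm\int_{N_\kappa} du \wedge dw_i \wedge dz_n \wedge \cdots \wedge dz_{\nuup-1}.
\end{equation*}
Because $u \in \Otm(U)$, the 1-form $du$ annihilates $T^{0,1}M$ and hence, at each point, is a linear combination of the $\nuup$ forms $dz_1, \ldots, dz_\nuup$ that span this annihilator. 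On $N_\kappa$ the $w_j$ are constant for $j \neq i$, so the corresponding $dz_j$'s pull back to zero; a short case analysis then shows that every monomial of $du \wedge dw_i \wedge dz_n \wedge \cdots \wedge dz_{\nuup-1}$ contains a repeated factor, so the integrand vanishes identically on $N_\kappa$, and Morera's theorem yields holomorphy in $w_i$.

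For stage (iii), $F$ is holomorphic on $\omega$, continuous on $\bar\omega \cap B$, and zero on the smooth real hypersurface $\partial\omega \cap B$. Given any $w_0 \in \omega$ I select a complex line $L$ through $w_0$ meeting $\partial\omega \cap B$ transversally in a smooth arc; then $F|_L$ is holomorphic on the plane domain $L \cap \omega$, continuous up to its boundary, and vanishes on the boundary arc $L \cap \partial\omega \cap B$. One-variable boundary uniqueness (Schwarz reflection, or Privalov) forces $F|_L \equiv 0$, whence $F(w_0) = 0$. The main obstacle I anticipate is the bookkeeping in stage (ii): carefully tracking signs through Fubini and Stokes, and verifying that the CR condition on $du$ combined with the vanishing of the appropriate $dz_j$ on the submanifold $N_\kappa$ really makes the top-degree integrand zero; everything else is a standard application of Fubini, Stokes, Morera, and boundary uniqueness for holomorphic functions.
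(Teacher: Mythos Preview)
Your three-stage plan matches the paper's strategy exactly, and each stage is carried out correctly; the differences are purely in implementation.

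In stage~(ii) the paper avoids Hartogs and your case analysis by working on a full polycylinder $D=D_1\times\cdots\times D_n\subset\omega$: it integrates $u\,dz_1\wedge\cdots\wedge dz_{\nuup}\wedge\piup^*\gamma_j$ (with $\gamma_j=\tfrac{\partial}{\partial\bar w_j}\rfloor(d\bar w_1\wedge\cdots\wedge d\bar w_n)$) over $\partial_jD$, observes that this form kills the other faces $\partial_iD$ ($i\neq j$), and applies Stokes on $\piup^{-1}(D)$. The point is that with \emph{all} of $dz_1,\ldots,dz_{\nuup}$ present, the CR condition $du\in\langle dz_1,\ldots,dz_{\nuup}\rangle$ immediately gives $du\wedge dz_1\wedge\cdots\wedge dz_{\nuup}=0$, with no need to restrict to a low-dimensional $N_\kappa$ and argue that the missing $dz_j$'s vanish there. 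Your one-variable-at-a-time route via Morera and Hartogs is equally valid and perhaps more elementary, but the polycylinder computation is slicker.

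In stage~(iii) the paper simply invokes Holmgren's uniqueness theorem for the constant-coefficient operator $\bar\partial$: a holomorphic $F$ on $\omega$, continuous up to the smooth non-characteristic hypersurface $\partial\omega\cap B$ and vanishing there, must vanish identically. Your complex-line argument reaches the same conclusion by one-variable boundary uniqueness; just be mindful that you need the component of $L\cap\omega$ containing $w_0$ to carry a nontrivial arc of $L\cap\partial\omega\cap B$ in its closure---this is guaranteed because $B\setminus\omega$ is strictly convex, so $L\cap(B\setminus\omega)$ is a single convex blob and $L\cap\omega$ is connected whenever $L$ meets both $w_0$ and $B\setminus\omega$.
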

\begin{proof}
We prove first that $F$ is holomorphic on $\omega$. \par
Fix any polycylinder $D=D_1\times\cdots\times{D}_n$ in $\omega$,
with $D_i=\{\tau\in\mathbb{C}\mid |\tau-\tau_i|\leq\epsilon_i\}$.
For $1\leq{j}\leq{n}$ we set $\partial_j(D)=\{w\in{D}\mid |w_j-\tau_j|=\epsilon_j\}$,
$\gammaup_j=\dfrac{\partial}{\partial{\bar{w}_j}}\rfloor (d\bar{w}_1\wedge\cdots
\wedge{d}\bar{w}_n)$
and consider the integral 
\begin{align*}
  &\oint_{\partial_jD}F(w)dw_1\wedge\cdots\wedge{dw}_n\wedge\gammaup_j
=
\oint_{\partial_jD}dw_1\wedge\cdots\wedge{dw}_n\wedge\gammaup_j
\int_{M_w}u \,dz_n\wedge\cdots\wedge{dz}_{\nuup-1}.
\end{align*}
Let $N_i=\piup^{-1}(\partial_iD)$ and $N=\piup^{-1}(D)$. We have
$\partial{N}={\sum}_{i=1}^n\pm{N}_i$. Moreover, the form 
$u\, dz_1\wedge\cdots\wedge{dz_{\nuup}}\wedge\piup^*\gammaup_j$ is zero on
$N_i$ for $i\neq{j}$. Thus we obtain:
\begin{align*}
& \oint_{\partial_jD}F(w)dw_1\wedge\cdots\wedge{dw}_n
\wedge\gammaup_j
=\pm \int_{N_j}u\, dz_1\wedge\cdots\wedge{dz_{\nuup}}\wedge 
\piup^*\gammaup_j\\
&\quad 
={\sum}_{i=1}^n
\int_{{N}_{i}}\pm \,u \, dz_1\wedge \cdots{dz}_{\nuup}\wedge \piup^*\gammaup_j 
=\pm
\int_{{N}}du\wedge dz_1\wedge \cdots{dz}_{\nuup}\wedge \piup^*\gammaup_j
=0
\end{align*}
because $du\in\langle dz_1,\hdots, dz_{\nuup}\rangle$ by the assumption
that $u\in\Otm(U)$. This equality, valid for all closed polycylinder $D$
in $\omega$ and all $1\leq{j}\leq{n}$, implies that $F$ is holomorphic in
$\omega$. 
Clearly $F(w)\to{0}$ when $w\to\partial\omega\cap{B}$, because 
$M_{w_0}$ is a point for \mbox{$w_0\in\partial\omega\cap{B}$},
and hence $F=0$ on $\omega$ by Holmgren's uniqueness theorem,
since $\bar\partial$ has constant coefficients in $\mathbb{C}^n$.
\end{proof}

Let $\psi$ be a smooth function with compact support in $\mathbb{C}$,
and set
\begin{equation*}
  \hat{\psi}(\tauup)=\frac{1}{2\pi{i}}\iint \dfrac{\psi(\zetaup)
d\zetaup\wedge{d}\bar{\zetaup}}{\zetaup-\tauup}.
\end{equation*}
Then $\dfrac{\partial\hat{\psi}}{\partial\bar{\tauup}}=\psi$ and
therefore \begin{equation*}
\psi^{\sharp}(\zetaup,\tauup)=
\bar{\zetaup}\psi(\tauup)d\bar{\tauup}+\hat{\psi}(\tauup)d\bar\zetaup
=\bar{\partial}(\bar{\zetaup}\,\hat{\psi}(\tauup))
\end{equation*}
is a $\bar\partial$-closed form in $\mathbb{C}^2$, with
\begin{equation*}
d\psi^{\sharp}=\dfrac{\partial{\hat{\psi}(\tauup)}}{\partial{\tauup}}d\tauup
\wedge d\bar{\zetaup}
+\bar{\zetaup}\dfrac{\partial\psi(\tauup)}{\partial\tauup}d\tauup\wedge{d}\bar\tauup
=d\tauup\wedge \dfrac{\partial}{\partial\tauup}\psiup^{\sharp}.
\end{equation*}

\begin{lem}\label{lem52} Let $U'\Subset{U}$. 
If $\psi_i$, for $i=1,\hdots,\nuup-1$, are smooth functions of a complex
variable $\tauup$, with $|\psi_i|$ sufficiently small.  
Then
\begin{equation}
  \label{eq:53}
  \thetaup_1=dz_1+\psi_1^{\sharp}(z_n,z_{\nuup}),\;\hdots,\;
\thetaup_{\nuup-1}=d{z}_{\nuup-1}+\psi_{\nuup-1}^{\sharp}(z_n,z_{\nuup}),
\; \thetaup_{\nuup}=dz_\nuup
\end{equation}
generate the involutive ideal sheaf $\Jt'_M$
of a $CR$ structure of type $(n,k)$ on $U'$.
\end{lem}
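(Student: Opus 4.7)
Three things must be checked: linear independence of $\theta_1,\hdots,\theta_{\nuup}$, the condition $\Zt'\cap\bar{\Zt'}=0$ making $\Jt'_M$ define a $CR$ structure, and involutivity. All three will follow from openness of non-degeneracy conditions plus one explicit computation.

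\emph{Rank and $CR$ condition.} Since $\theta_i-dz_i=\psi_i^{\sharp}(z_n,z_{\nuup})$ is a $\mathcal{C}^0$ perturbation of $dz_i$ that tends to $0$ on the compact closure $\bar U'$ as $|\psi_i|\to 0$, and since $dz_1\wedge\cdots\wedge dz_{\nuup}\neq 0$ on $U$, for $|\psi_i|$ small enough we have $\theta_1\wedge\cdots\wedge\theta_{\nuup}\neq 0$ at every point of $U'$; thus $\Jt'_M$ has complex rank $\nuup=n+k$ and its characteristic distribution $\Zt'$ has complex rank $m-\nuup=n$. The stronger condition $\Zt'\cap\overline{\Zt'}=0$ is equivalent to $\theta_1\wedge\cdots\wedge\theta_{\nuup}\wedge\bar\theta_1\wedge\cdots\wedge\bar\theta_{\nuup}\neq 0$ at every point of $U'$; this holds for the unperturbed $dz_i$'s (the original $CR$ structure is of type $(n,k)$), and being an open condition it survives a sufficiently small perturbation on the compact set $\bar U'$.

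\emph{Involutivity.} Because $\Jt'_M$ is generated by $1$-forms, it is enough to verify that each $d\theta_i$ lies in the ideal. Clearly $d\theta_{\nuup}=d(dz_{\nuup})=0$. For $i<\nuup$, using $d(dz_i)=0$ and the identity
\begin{equation*}
d\psi_i^{\sharp}(\zetaup,\tauup)=d\tauup\wedge\tfrac{\partial}{\partial\tauup}\psi_i^{\sharp}(\zetaup,\tauup)
\end{equation*}
established in the excerpt, applied with $\zetaup=z_n$ and $\tauup=z_{\nuup}$ and using the chain rule on $M$, we obtain
\begin{equation*}
d\theta_i=d\psi_i^{\sharp}(z_n,z_{\nuup})=dz_{\nuup}\wedge\alpha_i=\theta_{\nuup}\wedge\alpha_i
\end{equation*}
for a smooth $1$-form $\alpha_i$. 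Hence $d\theta_i\in\theta_{\nuup}\wedge\varOmega^1_M\subset\Jt'_M$, so $\Jt'_M$ is a differential ideal and the associated $\Zt'$ is involutive.

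\emph{Main difficulty.} The only non-formal point is the uniform smallness needed for conditions (a) and (b) above on all of $U'$: this is why the hypothesis $U'\Subset U$ is used, for it reduces both open-ness checks to the compact set $\bar U'$, where a single $\mathcal{C}^0$-bound on the $\psi_i$ (in terms of the lower bound of $|dz_1\wedge\cdots\wedge dz_{\nuup}\wedge d\bar z_1\wedge\cdots\wedge d\bar z_{\nuup}|$ on $\bar U'$) suffices. The involutivity check is then purely algebraic, driven entirely by the $\bar\partial$-closedness built into the ansatz $\psi_i^{\sharp}=\bar\partial(\bar\zetaup\,\hat\psi_i(\tauup))$, which forces $d\theta_i$ to be a multiple of $\theta_{\nuup}$.
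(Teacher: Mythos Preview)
Your argument follows essentially the same route as the paper's proof: a perturbation argument for linear independence on a compact set, together with the observation that $d\psi_i^{\sharp}(z_n,z_{\nuup})$ is a multiple of $dz_{\nuup}=\theta_{\nuup}$, hence lies in the ideal. The involutivity step is correct and in fact slightly cleaner than the paper's, which checks $d\theta_i\wedge\theta_1\wedge\cdots\wedge\theta_{\nuup}=0$ rather than exhibiting $d\theta_i$ explicitly as $\theta_{\nuup}\wedge\alpha_i$.

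There is, however, one genuine slip in your treatment of the $CR$ condition. You claim that $\Zt'\cap\overline{\Zt'}=0$ is equivalent to
\[
\theta_1\wedge\cdots\wedge\theta_{\nuup}\wedge\bar\theta_1\wedge\cdots\wedge\bar\theta_{\nuup}\neq 0.
\]
But this is a form of degree $2\nuup=2n+2k$ on a manifold of real dimension $m=2n+k$, so for $k\geq 1$ (which is the case under consideration) it vanishes identically, even for the unperturbed $dz_i$'s. The correct criterion, and the one the paper uses, is that $\theta_1,\hdots,\theta_{\nuup},\bar\theta_1,\hdots,\bar\theta_n$ (only $n$ conjugates, giving $\nuup+n=m$ forms) be linearly independent at every point; equivalently, that the span of the $\theta_i$ and $\bar\theta_i$ be all of $\mathbb{C}T^*_pM$. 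Once you replace your wedge condition by this one, your openness argument on $\bar U'$ goes through exactly as you wrote it, and the proof is complete.
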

\begin{proof}
The ideal sheaf is generated on $U$ by $dz_1,\hdots,dz_{\nuup}$.
After shrinking, we can assume that $d{z}_1$,
$\hdots$, $d{z}_n$, $d\bar{z}_1$, $\hdots$, $d\bar{z}_n$ are linearly
independent on $U$.\par
Thus, by taking $|\psi_i|$ sufficiently small, we may keep 
$\thetaup_1,\hdots,\thetaup_{\nuup},
\bar{\thetaup}_1,\hdots,\bar{\thetaup}_n$ 
linearly independent in any neighborhood $U'$ of $p_0$ with 
$U'\Subset{U}$.
Moreover, since $d\psi_i^{\sharp}(z_n,z_{\nuup})\wedge{dz_{\nuup}}=0$,
for $1\leq{i}<\nuup$, 
we obtain
  \begin{equation*}
    (d\thetaup_i)\wedge\thetaup_1\wedge\cdots\wedge\thetaup_{\nuup}=
d\psi_i^{\sharp}(z_n,z_{\nuup})
\wedge{\thetaup}_1\wedge\cdots
\wedge\thetaup_{\nuup-1}\wedge{dz}_{\nuup}=0,\;\;\forall i=1,\hdots,\nuup-1.
\end{equation*}
This shows that the ideal sheaf $\Jt_M'$
generated by $\thetaup_1,\hdots,
\thetaup_{\nuup}$ is involutive and 
defines a $CR$ structure of type
$(n,k)$ on~$U'$.
\end{proof}
Let us fix a sequence of distinct complex numbers
$\{\tauup_j\}$, such that
\begin{list}{}{}
\item $\im\tauup_j>0$ for all $j$, 
\quad $\tauup_j\to{0}$, \quad $\{w_{n}=\tauup_j\}
\cap\omega\neq\emptyset$ for all $j$.
\end{list}  
For each $j$ we choose an open disk
$\Delta_j$  in $\mathbb{C}$, centered
at $\tauup_j$, and such that
$\bar{\Delta}_j\cap{\bigcup}_{i\neq{j}}\bar{\Delta_i}=\emptyset$. 
Provided the $\tau_j$'s are sufficiently close to $0$,
for each $j$ we can fix a point $w^{(j)}\in\omega$, with $w^{(j)}_n=\tauup_j$,
and $w^{(j)}\to{0}$, 
 and take the functions $\psi_j$ in Lemma\,\ref{lem52} in such a way that
\begin{gather*}
  \supp\psi_i={\bigcup}_{j=0}^{\infty}\bar{\Delta}_{i+j{\nuup}},\quad
\text{for}\quad i=1,\hdots,n,\\
c_{i+j{\nuup}}=\int_{A_{i+j{\nuup}}}\psi_i^{\sharp}(z_n,z_{\nuup})
\wedge{d}z_{n}\wedge\cdots\wedge{dz}_{\nuup-1} \wedge
dz_{\nuup}\;\;\text{is real and $>0$},
\end{gather*}
where ($e_1,\hdots,e_n$ is the canonical basis of $\mathbb{C}^n$)
\begin{equation*}
  A_j=\piup^{-1}(\{w^{(j)}+(\tauup-\tauup_j){e_n}\mid \tauup\in\Delta_j\}).
\end{equation*}
Let $u$ be a $CR$ function on an open neighborhood $V$ of $p_0$ in
$U$ for the structure defined by
\eqref{eq:53}. This means that $du_{(p)}\in\Jt'_{M\,(p)}$ for all
$p\in{V}$. 
Since $\Jtm$ and $\Jt_{M}'$ agree to infinite order 
outside ${\bigcup}_{j}\piup^{-1}(\{w\mid w_n\in\Delta_j\}$, and 
${\bigcup}_j\{w\in\omega\mid w_n\in\Delta_j\}$ does not disconnect
$\omega$, by the argument of Lemma\,\ref{lem51} 
we have \eqref{eq:52} for all $w$ in the complement in 
$\piup(V)\setminus{\bigcup}_{j}\{w\in\omega\mid w_n\in\Delta_j\}$.
Thus we obtain
\begin{align*}
  0=\pm \oint_{\tauup\in\partial\Delta_j}d\tauup\int_{M_{w^{(j)}+\tauup{e}_n}}
u dz_n\wedge\cdots\wedge{d}z_{\nuup-1}
&=\pm\int_{\partial{A}_j}u\, dz_n\wedge\cdots\wedge{dz}_{\nuup}\\
&=\pm\int_{A_j}du\wedge
dz_n\wedge\cdots\wedge{dz}_{\nuup}.
\end{align*}
This yields
\begin{equation*}
  \int_{A_i+j{\nuup}}\dfrac{\partial{u}}{\partial{z}_i}\,
\psi_i^{\sharp}\wedge dz_n\wedge\cdots\wedge{dz}_{\nuup}
=0,
\end{equation*}
where, to compute $\dfrac{\partial{u}}{\partial{z}_i}$, we consider any
$\mathcal{C}^1$-extension of $u$ as a function of 
the complex variables $z_1,\hdots,z_{\nu}$ for which $\bar\partial{u}=0$
at all points of $U$. Taking the limit, we observe that
\begin{equation*}
  c_{i+j{\nuup}}^{-1}\int_{A_i+j{\nuup}}\dfrac{\partial{u}}{\partial{z}_i}\,
\psi_i^{\sharp}\wedge dz_n\wedge\cdots\wedge{dz}_{\nuup}
\longrightarrow \dfrac{\partial{u}(p_0)}{\partial{z}_i}
\Longrightarrow \dfrac{\partial{u}(p_0)}{\partial{z}_i}=0 \;\; \forall
i=1,\hdots,\nuup-1,
\end{equation*}
which, together with \eqref{eq:d1} shows that $du(p_0)\in
\mathbb{C}\,dz_{\nuup}(p_0)$. \par
We have proved:
\begin{thm} Let $M$ be a $CR$ manifold of type $(n,k)$ and assume that
$M$ is Lorentzian at a point $p_0$. Then
we can find a new $CR$ structure of type $(n,k)$ on 
a neighborhood $U$ of $p_0$, which agrees with
the original one to infinite order at $p_0$, 
and a real codirection $\etaup_0\in{T}^*_{p_0}M$ such that,
if $\Zt$ is the distribution of $(0,1)$-vector fields for this
new structure, 
 all solutions
$u\in\mathcal{C}^1$ on a neighborhood of $p_0$ to the homogeneous
system \eqref{eq:d1} satisfy $du(p_0)\in\mathbb{C}\etaup_0$.   
\end{thm}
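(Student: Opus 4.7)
The plan is to adapt the perturbation scheme of Section~\ref{sec:b} to higher codimension by replacing the one-dimensional parameter space there with an $n$-dimensional holomorphic base $(z_1,\hdots,z_{n-1},z_{\nuup})$, exploiting the Lorentzian signature to produce a fibration by spheres $S^k$ whose fibers collapse to single points along part of the boundary of the base. First I would apply Lemma~\ref{lem25} to reduce to the locally $CR$-embeddable case and pick a $CR$-chart $(U,z_1,\hdots,z_{\nuup})$ centered at $p_0$ realizing the Lorentzian normal form \eqref{eq:51a}, so that the projection $\piup\colon U\to\mathbb{C}^n$ has image $\bar\omega$ with $B\setminus\omega$ strictly convex, fibers $M_w\simeq S^k$ over interior points of $\omega$, and $M_w$ a single point over $\partial\omega\cap B$.

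The central analytic tool, supplied by Lemma~\ref{lem51}, is that for any $CR$ function $u$ the fiber integral $F(w)=\int_{M_w}u\,dz_n\wedge\cdots\wedge dz_{\nuup-1}$ is holomorphic on $\omega$, extends continuously by zero across the part of $\partial\omega$ where the fibers collapse, and hence vanishes identically on $\omega$ by Holmgren's uniqueness for $\bar\partial$. The perturbation is then obtained by replacing the defining forms $dz_i$ for $1\leq i\leq\nuup-1$ by $\thetaup_i=dz_i+\psi_i^{\sharp}(z_n,z_{\nuup})$, where $\psi_i^{\sharp}=\bar{z}_n\psi_i(z_{\nuup})d\bar{z}_{\nuup}+\hat{\psi}_i(z_{\nuup})d\bar{z}_n$ is the $\bar\partial$-closed form attached to a small bump $\psi_i$ supported on pairwise disjoint disks $\Delta_j\subset\{\im\tauup>0\}$ accumulating at $0$. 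The identity $d\psi_i^{\sharp}\wedge dz_{\nuup}=0$ ensures that the new ideal remains involutive and defines a $CR$ structure of type $(n,k)$ on a smaller neighborhood $U'$ (Lemma~\ref{lem52}); concentrating all $\psi_i$ near $0$ guarantees agreement to infinite order with the original structure at $p_0$.

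The final step repeats the fiber-integration argument on the deformed structure. For any $\mathcal{C}^1$ solution $u$ of the perturbed homogeneous system on a neighborhood of $p_0$, the identity $F(w)=0$ still holds on the complement of $\bigcup_j\piup^{-1}(\{w_n\in\Delta_j\})$, since this complement is connected in $\omega$ and there the perturbed structure agrees with the original one to infinite order. Applying Stokes on the tubes $A_j=\piup^{-1}(w^{(j)}+(\Delta_j-\tauup_j)e_n)$ produces the localized identity
\begin{equation*}
\int_{A_{i+j\nuup}}\frac{\partial u}{\partial z_i}\,\psi_i^{\sharp}\wedge dz_n\wedge\cdots\wedge dz_{\nuup}=0,
\end{equation*}
and dividing by the positive real normalizers $c_{i+j\nuup}$ and letting $j\to\infty$ forces $\partial u(p_0)/\partial z_i=0$ for every $i=1,\hdots,\nuup-1$. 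Combined with the $n$ equations of \eqref{eq:d1}, this yields $du(p_0)\in\mathbb{C}\,dz_{\nuup}(p_0)$, so one may take $\etaup_0=\re\,dz_{\nuup}(p_0)$.

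The main obstacle is to arrange a single perturbation that simultaneously (i) preserves involutivity, (ii) preserves the infinite jet at $p_0$, and (iii) independently probes every transverse coordinate direction $z_1,\hdots,z_{\nuup-1}$. The $\bar\partial$-closed choice of $\psi_i^{\sharp}$ in only the two variables $(z_n,z_{\nuup})$ handles (i) by reducing involutivity to a single wedge computation, while cycling the index $i$ through $1,\hdots,\nuup-1$ along the labeling $i+j\nuup$ distributes the deformation across all transverse directions and handles (iii); requirement (ii) is automatic from the shrinking of the disks $\Delta_j$ to the origin.
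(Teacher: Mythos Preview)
Your proposal is correct and follows essentially the same route as the paper: reduction to the locally embeddable case via Lemma~\ref{lem25}, the Lorentzian normal form \eqref{eq:51a} with the projection $\piup$ onto $\mathbb{C}^n$ having $S^k$ fibers, the vanishing of the fiber integral from Lemma~\ref{lem51}, the involutive perturbation by the $\bar\partial$-closed forms $\psi_i^{\sharp}(z_n,z_{\nuup})$ from Lemma~\ref{lem52}, and the Stokes argument on the tubes $A_{i+j\nuup}$ forcing $\partial u(p_0)/\partial z_i=0$ for $i<\nuup$. The only point where your write-up is a bit glib is the claim that infinite-order agreement at $p_0$ is ``automatic from the shrinking of the disks'': the term $\hat\psi_i(z_{\nuup})\,d\bar z_n$ in $\psi_i^{\sharp}$ is a Cauchy transform and does not vanish at $0$ merely because the $\Delta_j$ accumulate there, so one must also arrange rapid decay of the bumps; the paper is equally terse on this point.
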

\begin{proof}
  Indeed, using Lemma\,\ref{lem25} we can always reduce to the case
in which $M$ is locally embeddable at $p_0$.
\end{proof}

\begin{cor}
We can find a new $CR$ structure of type $(n,k)$ on $U$, which agrees with
the original one to infinite order at $p_0$, and which is not $CR$-embeddable
at~$p_0$.  
\end{cor}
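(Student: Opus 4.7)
The plan is to derive the corollary directly from the preceding theorem by a pigeonhole argument on the dimension of the space spanned by differentials of $CR$ functions at $p_0$. I would take the new $CR$ structure $\Zt$ on a neighborhood $U$ of $p_0$ produced by the theorem, together with the real codirection $\etaup_0 \in T^*_{p_0}M$, so that every $u \in \mathcal{C}^1$ with $du_{(q)} \in \Jt'_{M(q)}$ for all $q$ near $p_0$ satisfies $du(p_0) \in \mathbb{C}\,\etaup_0$.

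Next I would argue by contradiction: suppose the new structure is locally $CR$-embeddable at $p_0$. By the definition of local $CR$-embeddability, there is a $CR$-chart $(U', z_1, \hdots, z_{\nuup})$ with $p_0 \in U'$, where $z_1, \hdots, z_{\nuup}$ are smooth $CR$ functions (so in particular $\mathcal{C}^1$ solutions of the system) and
\begin{equation*}
   dz_1(p_0) \wedge \cdots \wedge dz_{\nuup}(p_0) \neq 0.
\end{equation*}
Applying the theorem to each $z_j$, I get $dz_j(p_0) \in \mathbb{C}\,\etaup_0$ for $j=1,\hdots,\nuup$. Hence all the covectors $dz_1(p_0),\hdots,dz_{\nuup}(p_0)$ lie in a one-dimensional complex subspace of $\mathbb{C}T^*_{p_0}M$. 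Since $\nuup = n+k \geq n \geq 2$ by the standing assumption of this section, their wedge product must vanish, contradicting the non-degeneracy condition of the $CR$-chart. Therefore no $CR$-chart through $p_0$ can exist for the new structure, i.e.\ it is not $CR$-embeddable at $p_0$.

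The main (and essentially only) conceptual step is the reduction to the theorem; the rest is a clean dimension count. One small point to be careful about is the regularity: the theorem produces the obstruction already in class $\mathcal{C}^1$, which is strictly weaker than the $\mathcal{C}^{\infty}$ regularity required by Definition of a $CR$-chart, so the implication works in the right direction. If one wished to strengthen the statement to rule out even $\mathcal{C}^1$-embeddings into $\mathbb{C}^{\nuup}$, the same proof applies verbatim, since the theorem allows $u \in \mathcal{C}^1$ as test solutions.
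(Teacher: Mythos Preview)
Your argument is correct and is exactly the intended derivation: the paper states this corollary without proof, treating it as immediate from the preceding theorem, and your dimension count (all $dz_j(p_0)$ forced into the line $\mathbb{C}\etaup_0$, while a $CR$-chart requires $\nuup\geq 2$ linearly independent differentials) is the natural way to make that immediacy explicit.
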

\section{Involutive systems whose solutions are critical at $p_0$}
\label{sec4}
In this section we improve the result of the previous section in the
case of a Lorentzian $CR$ manifold of the hypersurface type.\par
We assume that
$M$ has $CR$-dimension $n\geq{2}$ and $CR$-codimension ${1}$,
and is Lorentzian
and locally embeddable at $p_0\in{M}$.
We have $m=\dim_{\mathbb{R}}M=n+2$ and 
we set $\nuup=n+1$, 
 \par
We can fix 
a $CR$-chart  $(U,z_1,\hdots,z_{\nuup})$ centered at $p_0$, with 
\begin{equation}
  \label{eq:41a}
  \im{z}_{\nuup}+{\sum}_{i=2}^{\nuup}z_i\bar{z}_i
=z_1\bar{z}_1
+O(|z|^3)\quad\text{on $U$}.
\end{equation}
By shrinking, we get that $z_1\bar{z}_1
\geq\tfrac{1}{2}\im{z_{\nuup}}$
on $U$. 
Consider the map $\piup:U\ni{p}\to w=(z_{2}(p),\hdots,z_{\nuup}(p))
\in\mathbb{C}^n$. By a further shrinking, we can assume 
that there is an open ball $B\subset\mathbb{C}^n$, centered at $0$, 
such that
\begin{list}{-}{}
\item $\piup(U)=\bar{\omega}$, with $B\setminus\omega$ 
strictly convex, 
and $\partial\omega\cap{B}$ smooth;
\item if $\im{\tauup}\geq{0}$, 
then $\{w\in{B}\mid \im{w}_n=\tauup\}\subset\omega$; 
\item for all $w\in\omega$ the set $M_{w}=\piup^{-1}(w)$ is diffeomorphic
to the circle $S^1$;
\item for $w\in\partial\omega\cap{B}$ the set $M_{w}=\piup^{-1}(w)$ is a point. 
\end{list}
By repeating the proof of Lemma\,\ref{lem51}, we obtain
\begin{lem}\label{lem41}
If $u\in\Otm(U)$, then
\begin{equation}
  \label{eq:52}\qquad\qquad\qquad
  F(w)=\oint_{M_w}u\, dz_1=0,\;\;
\forall w\in\omega.\qquad\qquad\qquad \qed
\end{equation}
\end{lem}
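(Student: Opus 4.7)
The plan is to mirror verbatim the argument of Lemma~\ref{lem51}, the only geometric difference being that the fibers $M_w$ of the projection $\piup\colon U\to\mathbb{C}^n$, $p\mapsto(z_2(p),\ldots,z_{\nuup}(p))$, are now circles rather than $k$-spheres and the integrand $u\,dz_1$ is a holomorphic $1$-form rather than a $k$-form. There are two steps: first, show that $F$ is holomorphic on $\omega$; second, show that $F$ extends continuously to $\partial\omega\cap B$ with value $0$. Then Holmgren's uniqueness theorem for $\bar\partial$ (which has constant coefficients on $\mathbb{C}^n$, with respect to which the strictly convex real hypersurface $\partial\omega\cap B$ is non-characteristic) forces $F\equiv 0$ on the connected open set $\omega$.

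For the holomorphicity step, I would fix an arbitrary closed polycylinder $D=D_1\times\cdots\times D_n\subset\omega$ with $D_i=\{|w_i-\tauup_i|\leq\epsilon_i\}$ and, setting $\gammaup_j=\dfrac{\partial}{\partial\bar{w}_j}\rfloor(d\bar{w}_1\wedge\cdots\wedge d\bar{w}_n)$, $N=\piup^{-1}(D)$, and $N_i=\piup^{-1}(\partial_i D)$, verify that
\[
\oint_{\partial_j D} F(w)\,dw_1\wedge\cdots\wedge dw_n\wedge\gammaup_j=0\quad\text{for each } j=1,\ldots,n.
\]
By Fubini, using the identity $dz_2\wedge\cdots\wedge dz_{\nuup}=\piup^{*}(dw_1\wedge\cdots\wedge dw_n)$, this contour integral equals $\pm\int_{N_j}u\,dz_1\wedge\cdots\wedge dz_{\nuup}\wedge\piup^{*}\gammaup_j$. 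The same form vanishes on $N_i$ for each $i\neq j$, because $\piup^{*}\gammaup_j$ contains $d\bar{z}_{i+1}$, and combining it with the $dz_{i+1}$ factor from the holomorphic block produces a multiple of $d\rho_i$, where $\rho_i=|w_i-\tauup_i|^2-\epsilon_i^2$ is a defining function of $N_i$; since $d\rho_i$ vanishes tangentially on $N_i$, so does the whole form. The integral therefore extends to $\pm\int_{\partial N}u\,dz_1\wedge\cdots\wedge dz_{\nuup}\wedge\piup^{*}\gammaup_j$, and Stokes' theorem converts it into $\pm\int_N du\wedge dz_1\wedge\cdots\wedge dz_{\nuup}\wedge\piup^{*}\gammaup_j$. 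Since $u\in\Otm(U)$, the $CR$ condition forces $du\in\langle dz_1,\ldots,dz_{\nuup}\rangle$, whence $du\wedge dz_1\wedge\cdots\wedge dz_{\nuup}=0$ and the whole integral vanishes. Invoking the multivariate Morera theorem, $F$ is holomorphic on $\omega$.

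For the boundary vanishing, I would observe that $M_{w_0}$ reduces to a single point for every $w_0\in\partial\omega\cap B$, so as $w\to w_0$ the circles $M_w$ shrink uniformly, and, since $u$ is bounded near $M_{w_0}$, the contour integral $F(w)=\oint_{M_w}u\,dz_1\to 0$. Applying Holmgren's theorem to this continuous extension then gives $F\equiv 0$ on $\omega$. The only point that requires genuine attention is the vanishing of $u\,dz_1\wedge\cdots\wedge dz_{\nuup}\wedge\piup^{*}\gammaup_j$ on each side face $N_i$ with $i\neq j$, but this is the same $d\rho_i$-factor trick used in Lemma~\ref{lem51}, and I anticipate no new obstacle in the present hypersurface case.
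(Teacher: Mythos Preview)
Your proposal is correct and follows exactly the approach the paper intends: the paper's own proof of Lemma~\ref{lem41} consists solely of the sentence ``By repeating the proof of Lemma~\ref{lem51}, we obtain'' followed by the statement, and your write-up is precisely that repetition, with the obvious substitutions ($S^1$ fibers, the integrand $u\,dz_1$, and the projection $\piup(p)=(z_2(p),\ldots,z_{\nuup}(p))$). Your justification for the vanishing of $u\,dz_1\wedge\cdots\wedge dz_{\nuup}\wedge\piup^*\gammaup_j$ on $N_i$ for $i\neq j$ via the $d\rho_i$ factor is a correct unpacking of what the paper leaves implicit in the proof of Lemma~\ref{lem51}.
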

Since $2\,z_1\bar{z}_1\geq\im{z}_{\nuup}$ on $U$,
for any smooth function $\psi$ of a complex variable
$\tauup$, with $\supp\psi\subset\{\im\tauup\geq{0}\}$, the function
$z_1^{-1}\psi(z_{\nuup})$ can be extended to a smooth function on $U$, vanishing to
infinite order on $\{z_1=0\}\cap{U}$. 
\begin{lem}\label{lem32a}
If $\psi_i$, for $i=1,\hdots,\nuup$ are smooth fuctions of a complex variable
$\tauup$, with support contained in $\{\im\tauup\geq{0}\}$, then
\begin{equation}
  \label{eq:52a}
  \thetaup_1=dz_1+z_1^{-1}{\psi_1({z_{\nuup}})}d{\bar{z}_{\nuup}},
\;\hdots,\; \thetaup_{\nuup}=
dz_{\nuup}+z_1^{-1}{\psi_{\nuup}({z_{\nuup}})}d{\bar{z}_{\nuup}}
\end{equation}
(the functions $z_1^{-1}{\psi_i({z_{\nuup}})}$ are put $=0$ for $z_1=0$) 
generate the ideal sheaf $\Jt_{U'}'$ of a $CR$ structure of type $(n,1)$
in a neighborhood $U'$ of $p_0$ in $U$, which agrees to infinite order with
the original one at $p_0$. 
\end{lem}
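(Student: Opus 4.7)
The approach is to parallel Lemma~\ref{lem52}, with the role of $\psi_i^{\sharp}$ now played by the one-forms $f_i\,d\bar z_{\nuup}$, where $f_i := z_1^{-1}\psi_i(z_{\nuup})$. The paragraph preceding the lemma already guarantees that each $f_i$ extends to a $\mathcal{C}^{\infty}$ function on $U$ with infinite-order vanishing on $\{z_1=0\}\cap U$, and hence in particular at $p_0$. A second, equally important property I would invoke is that $f_i$ is holomorphic in $z_1$ on $\{z_1\ne 0\}$, and this persists under the extension, so that $\partial f_i/\partial\bar z_1\equiv 0$ on $U$.

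Given these inputs, the verification proceeds in three short steps. First, shrink to a neighborhood $U'\Subset U$ of $p_0$ where $|f_i|$ is so small that the perturbed forms $\theta_1,\ldots,\theta_{\nuup},\bar\theta_1,\ldots,\bar\theta_n$ remain linearly independent; this uses only continuity of $f_i$ at $p_0$ and the linear independence of the original $dz_1,\ldots,dz_{\nuup},d\bar z_1,\ldots,d\bar z_n$ on $U$. Second, check involutivity: since $f_i$ depends only on $z_1,z_{\nuup},\bar z_{\nuup}$ and is holomorphic in $z_1$,
\[ d\theta_i = df_i\wedge d\bar z_{\nuup} = \frac{\partial f_i}{\partial z_1}\,dz_1\wedge d\bar z_{\nuup} + \frac{\partial f_i}{\partial z_{\nuup}}\,dz_{\nuup}\wedge d\bar z_{\nuup}, \]
and the identity $dz_j\wedge d\bar z_{\nuup} = \theta_j\wedge d\bar z_{\nuup}$ places $d\theta_i$ in the ideal $\Jt'_M$ generated by $\theta_1,\ldots,\theta_{\nuup}$, proving involutivity. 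Third, note that $\theta_i - dz_i = f_i\,d\bar z_{\nuup}$ is flat at $p_0$, so $\Jt'_M$ agrees with $\Jtm$ to infinite order there. Together with the rank computation, this shows that $\Jt'_M$ is the ideal sheaf of a $CR$ structure of type $(n,1)$ on $U'$ having the required infinite-order contact with the original one.

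The only genuinely substantive ingredient is the smoothness-plus-flatness assertion for $f_i$ invoked at the outset: it is there that the geometric inequality $2|z_1|^2\ge \im z_{\nuup}$ on $U$ combines with the support condition $\supp\psi_i\subset\{\im\tauup\ge 0\}$ (which by smoothness forces $\psi_i$ to be flat on the real axis) to make $z_1^{-1}\psi_i(z_{\nuup})$ extend smoothly and flatly across $\{z_1=0\}$. Once this is granted, together with the observation that the extension remains holomorphic in $z_1$, all remaining steps are formal and closely parallel to those of Lemma~\ref{lem52}.
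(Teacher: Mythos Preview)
Your proposal is correct and follows essentially the same route as the paper. The paper computes $d\theta_i$ explicitly and verifies involutivity via $d\theta_i\wedge\theta_1\wedge\cdots\wedge\theta_{\nuup}=d\theta_i\wedge dz_1\wedge\cdots\wedge dz_{\nuup}=0$, whereas you use the equivalent observation $dz_j\wedge d\bar z_{\nuup}=\theta_j\wedge d\bar z_{\nuup}$ to place $d\theta_i$ directly in the ideal; the smoothness/flatness input and the rank argument are identical.
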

\begin{proof} By the condition 
on the supports, the functions 
$z_1^{-1}{\psi_i(z_{\nuup})}$ are smooth on $U$ and
vanish to infinite for $z_1=0$, and in particular at $p_0$. 
Thus $\thetaup_1$, $\hdots$, $\thetaup_\nuup$, $\bar{\thetaup}_1$,
$\hdots$, $\bar{\thetaup}_n$ yield a basis of $\mathbb{C}T_pM$ for $p$
in a suitable neighborhood $U'$ of $p_0$, and agree with
$dz_1, \hdots, dz_{\nuup}, d\bar{z}_1,\hdots,d\bar{z}_n$ to infinite order
at $p_0$. \par
We have moreover
\begin{equation*}
  d\thetaup_i=z_1^{-1}
\dfrac{\partial{\psi_i({z_{\nuup}})}}{\partial{{z_{\nuup}}}}d{z_{\nuup}}\wedge
d{\bar{z}_{\nuup}}-{z_1^{-2}}\psi_i({z_{\nuup}})dz_1\wedge{d}{\bar{z}_{\nuup}}.
\end{equation*}
Hence
\begin{equation*}
  d\thetaup_i\wedge\thetaup_1\wedge\cdots\wedge\thetaup_{\nuup}=
d\thetaup_i\wedge{dz_1}\wedge\cdots\wedge{d}z_{\nuup}=0
\end{equation*}
shows that $\Jt'_{U'}$ is involutive. The proof is complete.
\end{proof}
Let us fix a sequence of distinct complex numbers
$\{\tauup_j\}$, such that
\begin{list}{}{}
\item $\im\tauup_j>0$ for all $j$, 
\quad $\tauup_j\to{0}$, \quad $\{w_{n}=\tauup_j\}
\cap\omega\neq\emptyset$ for all $j$.
\end{list}  
For each $j$ we choose an open disk
$\Delta_j$  in $\mathbb{C}$, centered
at $\tauup_j$, and such that
$\bar{\Delta}_j\cap{\bigcup}_{i\neq{j}}\bar{\Delta_i}=\emptyset$. 
Provided the $\tau_j$'s are sufficiently close to $0$,
for each $j$ we can fix a point $w^{(j)}\in\omega$, with $w^{(j)}_n=\tauup_j$,
and $w^{(j)}\to{0}$, 
 and take the functions $\psi_j$ in Lemma\,\ref{lem32a} in such a way that
\begin{gather*}
  \supp\psi_i={\bigcup}_{j=0}^{\infty}\bar{\Delta}_{i+j{(\nuup+1)}},\quad
\text{for}\quad i=1,\hdots,\nuup,\\
c_{i+j{(\nuup+1)}}=\int_{A_{i+j({\nuup}+1)}}\!\!\!
z_1^{-1}\psi_i({z_{\nuup}})d{\bar{z}_{\nuup}}
\wedge{d}z_{1}\wedge
dz_{\nuup}\;\;\text{is real and $>0$},
\end{gather*}
where 
\begin{equation*}
  A_j=\piup^{-1}(\{w^{(j)}+(\tauup-\tauup_j){e_n}\mid \tauup\in\Delta_j\}).
\end{equation*}
Here we denoted by 
$e_1,\hdots,e_n$ the canonical basis of $\mathbb{C}^n$. \par
Let $u$ be a $CR$ function on an open neighborhood $V$ of $p_0$ in
$U'$ for the structure defined by
\eqref{eq:52a}. This means that $du_{(p)}\in\Jt'_{U'\,(p)}$ for all
$p\in{V}$. 
Since $\Jtm$ and $\Jt_{U'}'$ agree to infinite order on $\piup(U')$ 
outside ${\bigcup}_{i}\supp{\psi_i(w)}$, and this set
does not disconnect $U$, by the argument of Lemma\,\ref{lem51} 
we have \eqref{eq:52} for all $w$ in the complement in $\piup(V)$ of
${\bigcup}_{j}\{w\in\omega\mid w_n\in\Delta_j\}$.
Thus we obtain
\begin{equation*}
0=\pm \oint_{\tauup\in\partial\Delta_j}d\tauup\oint_{M_{w^{(j)}+\tauup{e}_n}}
\!\!\!
u \,dz_n
=\pm\int_{\partial{A}_j}u\, dz_1\wedge{dz}_{\nuup}
=\pm\int_{A_j}du\wedge
dz_1\wedge{dz}_{\nuup}.  
\end{equation*}
This yields
\begin{equation}\label{eq:444}
  I_{i+j(\nuup+1)}(u)=\int_{A_i+j{(\nuup+1)}}\dfrac{\partial{u}}{\partial{z}_i}\,
z_1^{-1}\wedge{d\bar{z}_{\nuup}}\wedge dz_1\wedge{dz}_{\nuup}
=0,
\end{equation}
where, to compute $\dfrac{\partial{u}}{\partial{z}_i}$, we consider any
$\mathcal{C}^1$-extension of $u$ as a function of 
the complex variables $z_1,\hdots,z_{\nu}$ for which $\bar\partial{u}=0$
at all points of $V$. When $j\to\infty$, 
$c_{i+j{(\nuup+1)}}^{-1}I_{i+j{(\nuup+1)}}
\to \dfrac{\partial{u}(p_0)}{\partial{z}_i}$.
Hence, from
\eqref{eq:444} 
we obtain that 
$\dfrac{\partial{u}(p_0)}{\partial{z}_i}=0$ for $1\leq{i}\leq\nuup$,
which, together with \eqref{eq:d1} shows that $du(p_0)=0$. \par
We have proved:
\begin{thm} \label{thm43}
If $M$ is a $CR$ manifold of type $(n,1)$ and is 
Lorentzian at $p_0\in{M}$, then
we can find a new $CR$ structure of type $(n,1)$ on 
an open neighborhood $U$ of $p_0$ in $M$, which agrees with
the original one to infinite order at $p_0$, 
such that,
if $\Zt$ is the distribution of $(0,1)$-vector fields for this
new structure, 
 all solutions
$u\in\mathcal{C}^1$ on a neighborhood of $p_0$ to the homogeneous
system \eqref{eq:d1} satisfy $du(p_0)=0$.
\end{thm}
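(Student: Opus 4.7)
The plan is to follow the strategy of Section 3 (the higher codimension case), but exploiting the extra flexibility afforded by $CR$-codimension $1$ to get a perturbation that kills every component of $du(p_0)$, not just those transverse to a real codirection. First I would invoke Lemma \ref{lem25} to reduce to the locally $CR$-embeddable case. Then, using the Lorentzian condition at $p_0$, I would choose a $CR$-chart $(U,z_1,\hdots,z_{\nuup})$ with the normal form \eqref{eq:41a}, which gives the crucial pointwise estimate $2z_1\bar z_1 \ge \im z_{\nuup}$ on a shrunken neighborhood. This estimate is the geometric engine of the proof: it forces $z_1$ to vanish on the set $\{\im z_{\nuup}\le 0\}$ in a quantitatively controlled way.

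Next I would set up the projection $\piup:U\to\bar\omega\subset\mathbb{C}^n$, $p\mapsto(z_2(p),\hdots,z_{\nuup}(p))$, whose generic fibers are circles $S^1$ and whose fibers over $\partial\omega\cap B$ degenerate to points. The analogue of Lemma \ref{lem51}, stated here as Lemma \ref{lem41}, shows that the fiber integral $F(w)=\oint_{M_w}u\,dz_1$ of any $CR$ function is holomorphic on $\omega$ and extends continuously to vanish on $\partial\omega\cap B$, hence is identically zero by Holmgren.

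The decisive new input is Lemma \ref{lem32a}: the generators $\thetaup_i=dz_i+z_1^{-1}\psi_i(z_{\nuup})\,d\bar z_{\nuup}$ define a $CR$ structure that agrees with the original to infinite order at $p_0$. This works precisely because the estimate $2z_1\bar z_1\ge\im z_{\nuup}$ lets $z_1^{-1}\psi_i(z_{\nuup})$ be extended smoothly with flat vanishing on $\{z_1=0\}$, and because $d\thetaup_i\wedge\thetaup_1\wedge\cdots\wedge\thetaup_{\nuup}=0$ by inspection (both terms in $d\thetaup_i$ contain $d\bar z_{\nuup}$). I would then choose a sequence $\tauup_j\to 0$ with $\im\tauup_j>0$, disjoint closed disks $\bar\Delta_j$ around $\tauup_j$, points $w^{(j)}\in\omega$ with $w^{(j)}_n=\tauup_j$, and distribute the supports of the $\psi_i$ cyclically across the disks so that each normalizing coefficient $c_{i+j(\nuup+1)}$ is real and positive.

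Finally, for any $\mathcal{C}^1$ $CR$ function $u$ in the new structure on a neighborhood of $p_0$, I would apply Stokes' theorem to the tubes $A_j=\piup^{-1}(w^{(j)}+(\tauup-\tauup_j)e_n,\tauup\in\Delta_j)$. On the complement of $\bigcup_j\piup^{-1}\{w_n\in\Delta_j\}$ the two $CR$ structures agree to infinite order, so Lemma \ref{lem41} still applies away from the perturbation supports, forcing $\oint_{\partial\Delta_j}d\tauup\oint_{M_{w^{(j)}+\tauup e_n}}u\,dz_1=0$. Converting this via Stokes to the interior integral over $A_j$ and substituting $du=\sum(\partial u/\partial z_i)\,dz_i$ on a $\mathcal{C}^1$-extension, all terms except the one carrying $\partial u/\partial z_i$ drop out, yielding \eqref{eq:444}. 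Dividing by $c_{i+j(\nuup+1)}$ and letting $j\to\infty$, the mean-value--type limit gives $\partial u(p_0)/\partial z_i=0$ for each $i=1,\hdots,\nuup$; combined with the equation $Lu(p_0)=0$ from \eqref{eq:d1} this yields $du(p_0)=0$. The main technical obstacle is verifying that the singular-looking coefficients $z_1^{-1}\psi_i(z_{\nuup})$ really give a smooth integrable perturbation agreeing to infinite order at $p_0$; everything else is an adaptation of Section 3 with the circle-fiber $S^1$ replacing $S^k$.
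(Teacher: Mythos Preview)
Your proposal is correct and follows essentially the same approach as the paper's own argument in \S\ref{sec4}: reduce to the locally $CR$-embeddable case via Lemma~\ref{lem25}, set up the normal form \eqref{eq:41a} and the circle-fibered projection $\piup$, use Lemma~\ref{lem41} to kill the fiber integrals, perturb via Lemma~\ref{lem32a} with the cyclically distributed bumps $\psi_i$, and extract $\partial u(p_0)/\partial z_i=0$ from the Stokes identity \eqref{eq:444} in the limit. One small slip of phrasing: the estimate $2z_1\bar z_1\ge\im z_\nuup$ does not force $z_1$ to vanish on $\{\im z_\nuup\le 0\}$ but rather the converse implication ($z_1=0\Rightarrow\im z_\nuup\le 0$); what matters, and what you correctly use later, is that on the support of $\psi_i(z_\nuup)$ one has $|z_1|^2\gtrsim\im z_\nuup>0$, so $z_1^{-1}\psi_i(z_\nuup)$ extends smoothly and flatly across $\{z_1=0\}$.
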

\begin{proof}
We can apply the discussion above after reducing, by Lemma\,\ref{lem25},
to the case in which $M$ is locally $CR$-embeddable  at $p_0$.
\end{proof}
\section{The case of higher codimension}\label{sh}
In this section we extend the result of Theorem\,\ref{thm43}
to some $CR$ manifolds with $CR$ dimension and $CR$ codimension 
both greater than $1$. To this aim we will first 
recall some results on weak unique continuation and next consider 
morphisms of $CR$ manifolds.
\subsection{Minimal locally $CR$-embeddable 
$CR$ manifolds and unique continuation}
We recall that a $CR$ submanifold 
$M$ is \emph{minimal} at $p_0\in{M}$ if there is
no germ $(N,p_0)$  of $CR$ submanifold of $M$ at $p_0$, having 
the same $CR$ dimension, but smaller $CR$ codimension.
We have
\begin{lem} \label{lem:3.1}
Assume that $M$ is minimal and locally $CR$-embeddable at $p_0\in{M}$. \par
Let $(S,p_0)$ be a germ 
of a $CR$ submanifold of $M$, of type $(0,\nuup)$.
Then a germ $f\in\Ot_{M,(p_0)}$  of a 
$CR$ function at $p_0$, vanishing on $(S,p_0)$,
is equal to $0$. \par
If $M$ is minimal and locally $CR$-embeddable
at all points, then the $CR$ functions on $M$ satisfy the weak
unique continuation principle.
\end{lem}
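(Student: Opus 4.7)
\textbf{Plan for Lemma \ref{lem:3.1}.} The strategy is to combine the local $CR$-embedding with the classical wedge extension theorem for minimal $CR$ submanifolds and then appeal to uniqueness for holomorphic functions across maximally totally real submanifolds.

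First, since $M$ is locally $CR$-embeddable at $p_0$, fix a $CR$-chart $(U,z_1,\hdots,z_{\nuup})$ centered at $p_0$, so that an open neighborhood of $p_0$ in $M$ is realized as a $CR$ submanifold of $\mathbb{C}^{\nuup}$ of real codimension $k$. The minimality of $M$ at $p_0$ is precisely the hypothesis under which the Tumanov–Tr\'epreau wedge extension theorem applies: every germ of continuous $CR$ function at $p_0$ extends to a holomorphic function on a wedge $\mathcal{W}\subset\mathbb{C}^{\nuup}$ with edge a neighborhood $U'\subset{U}$ of $p_0$, continuous up to the edge. Let $F$ be the holomorphic extension of $f$ to such a wedge.

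Second, observe that the hypothesis that $(S,p_0)$ has type $(0,\nuup)$ means $S$ is a totally real submanifold of $M$ of real dimension $\nuup$. Since $\mathbb{C}T_pS\cap T^{0,1}_pM=0$ and $T^{0,1}_pM=\mathbb{C}T_pM\cap T^{0,1}_p\mathbb{C}^{\nuup}$, the inclusion $\mathbb{C}T_pS\subset\mathbb{C}T_pM$ forces $\mathbb{C}T_pS\cap T^{0,1}_p\mathbb{C}^{\nuup}=0$, so $S$ is totally real in $\mathbb{C}^{\nuup}$ of real dimension equal to $\dim_{\mathbb{C}}\mathbb{C}^{\nuup}$, i.e.\ a maximally real (generic) submanifold of $\mathbb{C}^{\nuup}$.

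Third, conclude $F\equiv 0$. The boundary values of $F$ on the edge recover $f$, which vanishes on $S$; thus $F$ is holomorphic on the wedge $\mathcal{W}$, continuous up to $S$, and zero on $S$. By the edge-of-the-wedge theorem (or equivalently by Schwarz reflection across a maximally real submanifold), extending $F$ by $0$ across $S$ produces a function holomorphic on a full open neighborhood of $p_0$ in $\mathbb{C}^{\nuup}$ that vanishes on a nonempty open set, hence vanishes identically. Restricting back to the edge gives $f\equiv 0$ as a germ at $p_0$.

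For the second statement, use a standard connectedness argument. If $f\in\Otm(M)$ with $M$ connected vanishes on some nonempty open set, let $W=\mathrm{int}\{f=0\}$. Then $W$ is open and nonempty; it suffices to show $W$ is closed. If $p_0\in\bar{W}$, apply the wedge extension at $p_0$: the holomorphic extension $F$ on a wedge with edge a neighborhood of $p_0$ is continuous up to the edge and vanishes on the nonempty open subset $W\cap U'$ of the edge. By the same edge-of-the-wedge argument as above, $F\equiv 0$ on the wedge, so $f\equiv 0$ on a neighborhood of $p_0$, giving $p_0\in W$. Hence $W=M$. The \textbf{main obstacle} is the invocation of the wedge extension theorem, which requires precisely the minimality hypothesis, together with the delicate but classical step of extending across a maximally real submanifold via edge-of-the-wedge; once these two ingredients are in place the rest is formal.
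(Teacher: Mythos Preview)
Your proof is correct and follows essentially the same approach as the paper: local $CR$-embedding, Tumanov wedge extension via minimality, and the edge-of-the-wedge theorem to conclude vanishing from the maximally totally real $S$. Your treatment is in fact more detailed, spelling out why $S$ is maximally real in $\mathbb{C}^{\nuup}$ and making the connectedness argument for weak unique continuation explicit, whereas the paper compresses the second statement into a single appeal to unique continuation for holomorphic functions.
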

\begin{proof}
In the first part of the proof, we can assume that $M$ is a generic
$CR$ submanifold of an open set in $\mathbb{C}^{\nuup}$. For
any open neighborhood $U$ of $p_0$ in $M$, 
there are an open neighborhood 
$U_0$ of $p_0$ in $U$, and an open wedge $W$ in $\mathbb{C}^{\nuup}$,
with edge $U_0$, such that, the restriction $u|_{U_0}$
of any 
$u\in{\Ot}_M(U)$  is the boundary value of
a holomorphic
function $\tilde{u}$, defined  
on  $W$ (see \cite{Tu88, Tu90, BR90}). 
Assume now that $u\in\Otm(U)$ vanishes on $S$.
Then $\tilde{u}=0$ by the edge of the wedge theorem (see \cite{PK87}),
and therefore $u=0$.
\par
The last statement follows by unique continuation 
for holomorphic functions 
on open subsets of $\mathbb{C}^{\nuup}$.
\end{proof}
\subsection{$CR$-maps with simple singularities}\label{sec:5xa}
Let $M,N$ be $CR$ manifolds.
A smooth map $\piup:M\to{N}$ is
$CR$ if $d\piup(T^{0,1}M)\subset{T}^{0,1}N$. We say that $\piup$ is
\begin{list}{-}{}
\item a $CR$-immersion if $\ker{d\piup}=0$ and 
$d\piup(T^{0,1}M)=d\piup(\mathbb{C}TM)\cap{T}^{0,1}N$;
\item a $CR$-submersion if $d\piup(T_pM)=T_{\piup(p)}N$ and
$d\piup(T^{0,1}_pM)={T}^{0,1}_{\piup(p)}N$, $\forall p\in{M}$;
\item a local $CR$-diffeomorphism if it is at the same time 
a $CR$-immersion and a $CR$-submersion.
\end{list} \par
Next we consider critical points of some $CR$-maps.\par
Let $k\geq{1}$ and $\piup:M\to{N}$ a $CR$-map, with 
$M$ of type $(n,k)$ and $N$ of type $(n,k-1)$.
\par
If $p_0\in{M}$ is not a critical point of $\piup$, then
$\piup$ is a $CR$-submersion near $p_0$.  
\par
Assume now that 
$p_0$ is a critical point of $\piup$, and $q_0=\piup(p_0)$ the corresponding
critical value. Then the rank of $d\piup(p_0)$ is less than $2n+k-1$.
Assume that it is exactly equal to $2n+k-2$.
Then the dual map $d\piup^*(p_0):T^*_{q_0}N\to{T}^*_{p_0}M$ is not injective,
and has a $1$-dimensional kernel. 
\begin{defn}
 If $\ker{d}\piup^*(p_0)\cap{H}^0_{q_0}N=\{0\}$,
we say that $\piup$ has at $p_0$ a \emph{$CR$-noncharacteristic}
singularity.
\end{defn}
Assume that this is the case and fix
$0\neq\eta_0\in\ker{d}\piup^*(p_0)$. Then there is 
$\eta'_0$, uniquely determined modulo $H^0_{q_0}N$, such that
$\eta_0+i\eta_0'\in\mathbf{Z}^0_{q_0}N$, and we obtain an element
$\xiup_0\in{H}^0_{p_0}M$, with
$0\neq\xiup_0=d\piup^*(p_0)(\eta_0')$.
\begin{defn} If we can choose $\eta_0'$ in such a way that
$\mathbf{L}_{\xiup_0}$ has $1$ positive and $n-1$ negative eigenvalues,
we say that $\piup$ has a \emph{Lorentzian $CR$-non characteristic
singularity} at $p_0$.   
\end{defn}
Assume now that $M$ and
$N$ are locally $CR$-embeddable at $p_0$, $q_0$, respectively, and that 
$\mathbf{L}_{\xiup_0}$ has $1$ positive and $(n-1)$ negative eigenvalues.
We set $\nuup=n+k$. 
We can choose $CR$-charts $(U;z_1,\hdots,z_{\nuup})$ of $M$, centered 
at $p_0$, and $(W;w_2,\hdots,w_{\nuup})$ of $N$, centered at $q_0$, 
with $\piup(U)\subset{W}$ and $z_j=\piup^*w_j$ for $j=2,\hdots,\nuup$,
such that $i\xiup_0=dz_{\nuup}(p_0)$, and 
\begin{equation*}
  \im{z}_{\nuup}=h(z) \quad\text{on $U$, with $h(z)=z_1\bar{z}_1-
{\sum}_{i=2}^{\nuup}z_i\bar{z}_i+O(|z|^3)$}.
\end{equation*}

\begin{lem} \label{lem516}
Let $D=\{p\in{U}\mid z_1(p)=0\}$. Then, there is
an open neighborhood $U'$ of $p_0$ in $U$, an open neighborhood
$\omega$ of $q_0$ in $N$, 
and an open domain $\omega_-$ in $\omega$, 
with $q_0\in\partial\omega_-$, such that
\begin{enumerate}
\item
 ${\omega}_-\subset \piup(U')\subset\omega$,\;\;
$\piup(D\cap{U}')\subset\partial\omega$ \;\; $\text{and
$\omega$ is strictly pseudoconcave at $q_0$}$;
\item $\piup:U'\to{N}$ is proper and, for $q\in\piup(U')$,
$\piup^{-1}(q)$ is either a point or is diffeomorphic to a 
circle. 
\end{enumerate}
\end{lem}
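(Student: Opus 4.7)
The plan is to analyse $\piup$ through the single real equation $\im z_{\nuup}=h(z)$: the other $k-1$ defining equations of $M\subset\mathbb{C}^{\nuup}$ are $\piup$-pullbacks of defining equations of $N\subset\mathbb{C}^{\nuup-1}$, so in the chosen coordinates $\piup$ is just the projection $(z_1,w_2,\hdots,w_{\nuup})\mapsto (w_2,\hdots,w_{\nuup})$. For $w\in N$ near $q_0$, the fibre $\piup^{-1}(w)\cap U$ is cut out of the $z_1$-disc by
\begin{equation*}
  |z_1|^2 \;=\; \rho_0(w)+g(|z_1|^2,w),\qquad \rho_0(w):=\im w_{\nuup}+{\sum}_{i=2}^{\nuup}|w_i|^2,
\end{equation*}
where $g(s,w)=O(s^{3/2}+s|w|+s^{1/2}|w|^2+|w|^3)$ absorbs the cubic remainder of $h$.

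I would solve this for $s=|z_1|^2$ using the implicit function theorem applied to $\Phi(s,w):=s-g(s,w)-\rho_0(w)$, whose $s$-derivative at $(0,0)$ equals $1$; this produces a smooth function $\rho(w)=\rho_0(w)+O(|w|^3)$ on a neighbourhood $W'$ of $q_0$ in $N$ giving the unique non-negative solution. Shrinking $U'$ to $\{p\in U\mid |z(p)|<\delta\}$ for $\delta$ small, the fibre $\piup^{-1}(w)\cap U'$ is then empty when $\rho(w)<0$, the single point $\{z_1=0\}$ when $\rho(w)=0$, and the circle $\{|z_1|=\sqrt{\rho(w)}\}$ when $\rho(w)>0$. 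Since $\sqrt{\rho(w)}\le C|w|^{1/2}$, fibres are uniformly bounded in $z_1$, so $\piup^{-1}(K)\cap U'$ is compact for every compact $K$ in the image; this yields the properness and fibre-shape assertions of (2).

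For part (1), take a small open neighbourhood $\omega$ of $q_0$ in $N$, contained in $W'$, and set $\omega_-:=\{w\in\omega\mid\rho(w)>0\}$. Then $\piup^{-1}(\omega_-)\cap U'$ is a circle bundle over $\omega_-$, so $\omega_-\subset\piup(U')\subset\omega$; the set $\piup(D\cap U')=\{w\in\omega\mid\rho(w)=0\}$ is exactly the boundary of $\omega_-$ inside $\omega$, and $q_0\in\partial\omega_-$ since $\rho(q_0)=0$ and $\rho$ changes sign along $\im w_{\nuup}$. For the pseudoconcavity statement one computes $\partial\bar\partial\rho_0={\sum}_{i=2}^{\nuup}dw_i\wedge d\bar w_i$, Hermitian positive definite on all of $\mathbb{C}^{\nuup-1}$ and hence on $T^{1,0}_{q_0}N$; the same holds for $\partial\bar\partial\rho$ after an $O(|w|)$ perturbation. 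Using $-\rho$ as defining function for $\omega_-$, the Levi form of $\partial\omega_-$ at $q_0$ is negative definite, which is precisely strict pseudoconcavity of $\omega_-$ at $q_0$.

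The main technical point is the implicit function step for the non-negative variable $s=|z_1|^2$: one has to rule out a second root of $\Phi(s,w)=0$ creeping into $\{|z_1|<\delta\}$ and verify that the smooth branch $\rho(w)$ describes the fibre on a full neighbourhood of $q_0$, including where $\rho_0\le 0$. A quantitative bound of the form $|g(s,w)|\le \tfrac{1}{2}s+C|w|^3$ valid for small $(s,w)$ handles this, after which the geometric conclusions on fibres, image, and pseudoconcavity all fall out of the explicit formula for $\rho_0$.
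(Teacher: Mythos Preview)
Your overall strategy is right and is actually more explicit than the paper's rather terse argument, but there is a genuine gap in the implicit--function step. The cubic remainder in $h(z)=|z_1|^2-\sum_{i\ge 2}|z_i|^2+O(|z|^3)$ depends on $z_1$, not only on $|z_1|^2$; terms such as $\re(z_1^2\bar w_j)$ or $\re(z_1^3)$ are perfectly allowed. Hence the fibre equation over $w\in N$ is
\[
|z_1|^2+R(z_1,w)=\rho_0(w),
\]
and there is in general no function $g(s,w)$ of $s=|z_1|^2$ alone with $|z_1|^2=\rho_0(w)+g(|z_1|^2,w)$. Your $\Phi(s,w)$ is therefore not well defined, and several downstream claims fail literally: the fibre is not the round circle $\{|z_1|=\sqrt{\rho(w)}\}$ but only a smooth closed curve; the locus where the fibre degenerates to a point is the critical--value set of $\piup$, which need not coincide with $\piup(D)=\{w:R(0,w)=\rho_0(w)\}$ (the two hypersurfaces through $q_0$ agree to third order but can differ at fourth); and ``empty when $\rho(w)<0$'' is false, since the fibre may be a circle not passing through $z_1=0$.

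The easy repair is Morse--theoretic rather than an implicit function in $s$: for each fixed $w$ near $q_0$, the function $z_1\mapsto |z_1|^2+R(z_1,w)$ has a unique nondegenerate minimum at some $z_1^*(w)=O(|w|^2)$ with minimum value $m(w)=-\rho_0(w)+O(|w|^3)$, and its level sets above the minimum are diffeomorphic to $S^1$. This gives the point/circle dichotomy in (2). For (1), define $\omega_-$ as one side of the genuine hypersurface $\piup(D)=\{w:\rho_0(w)=R(0,w)\}$; its principal part is still $\rho_0$, so your pseudoconcavity computation via $\partial\bar\partial\rho_0>0$ goes through unchanged. Properness follows from the elementary estimate $\tfrac12|z_1|^2\le \im z_\nuup+\tfrac32\sum_{i\ge 2}|z_i|^2$ on a small $U$ (this is how the paper does it), which bounds $|z_1|$ uniformly over compact sets in $w$.
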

\begin{proof} 
Provided $U$ is sufficiently small, 
the restriction of $\piup$ to $D$ is a smooth diffeomorphism of $D$
onto a closed hypersurface $\piup(D)$ 
in an open neighborhood $\omega$ of $q_0$ in $N$. 
By further shrinking, we can assume that 
$A\setminus \piup(D)$ consists of two connected components 
$\omega_+$ and $\omega_-$ and that
$\omega_-\subset\piup(U)$.
\par
Since $\omega_-=\{\im{w_{\nu}}+{\sum}_{i=2}^{\nuup}w_i\bar{w}_i+O(|w|^3)>0\}$
near $q_0$, we have $q_0\in\partial\omega_-$ and $\omega_-$
strictly pseudoconcave at $q_0$. Moreover, by taking $U$ small,
we can assume that 
\begin{equation*}
  \im{z}_{\nuup}+\tfrac{3}{2}{\sum}_{i=2}^{\nuup}
z_i\bar{z}_i\geq \tfrac{1}{2} {z}_1\bar{z}_1 \quad\text{on $U$},
\end{equation*}
and therefore we obtain an $U'$ satisfying (1) and (2) by setting
$U'=U\cap\piup^{-1}(\omega)$ for a smaller neighborhood
$\omega$ of $q_0$ in $N$. 
\end{proof}
\subsection{Perturbation of the $CR$ structure of $M$}
We keep the notation of \S\ref{sec:5xa}, and we shall assume that
(1) and (2) of Lemma\,\ref{lem516} hold true with $U'=U$.
\begin{lem}\label{lemr54} 
Assume that $N$ is a minimal $CR$ manifold.
If $u$ is a $CR$ function on a connected open neighborhood $V$
of $p_0$ in $U$, then
\begin{equation}
  \label{eq:r1}
  g(q):=\oint_{\piup^{-1}(q)}u\, dz_1=0,\quad\forall q\in\piup({V}).
\end{equation}
\end{lem}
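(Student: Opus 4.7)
The plan is to follow the template of Lemmas \ref{lem51} and \ref{lem41}, with the base now being the $CR$ manifold $N$ instead of an open subset of $\mathbb{C}^n$. The proof splits into two main steps: $CR$-ness of $g$ on the open locus together with vanishing on the critical set, and unique continuation to propagate this vanishing across $\piup(D)$.

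First, I would verify that $g$ is continuous on $\piup(V)$, smooth and $CR$ on $\omega_-\cap\piup(V)$, and identically zero on $\piup(D)\cap\piup(V)$. By Lemma \ref{lem516}(2), $\piup^{-1}(q)$ is a smooth circle for $q\in\omega_-$ and a single point for $q\in\piup(D)$; the latter degeneration gives both continuity of $g$ at points of $\piup(D)$ and vanishing there, since the integral of the continuous $1$-form $u\,dz_1$ over a single point is $0$. To check $\bar\partial_b g=0$ on $\omega_-\cap\piup(V)$, I would mimic the polycylinder computation of Lemma \ref{lem51}: using the $CR$-chart $(W;w_2,\hdots,w_\nuup)$ of $N$, for any closed polycylinder $D'\subset\omega_-\cap\piup(V)$ with boundary face $\partial_j D'$, Fubini and Stokes on $\piup^{-1}(D')$ reduce $\oint_{\partial_j D'} g(w)\, dw_1\wedge\cdots\wedge dw_{n+k-1}\wedge\gammaup_j$ to $\pm\int_{\piup^{-1}(D')} du\wedge dz_1\wedge\cdots\wedge dz_{\nuup}\wedge\piup^*\gammaup_j$, which vanishes because $du\in\langle dz_1,\hdots,dz_\nuup\rangle$ by the $CR$-ness of $u$.

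Second, I would invoke unique continuation on $N$ to conclude $g\equiv 0$ on all of $\piup(V)$. By Lemma \ref{lem516}(1), $\omega_-$ is strictly pseudoconcave at $q_0$, and $N$ is both minimal (by hypothesis) and locally $CR$-embeddable at $q_0$ (via the explicit chart $(W;w_2,\hdots,w_\nuup)$). These are precisely the conditions triggering a Hans Lewy / Baouendi-Trèves extension: the continuous $CR$ function $g$ on $(\omega_-\cup\piup(D))\cap\piup(V)$ extends as a $CR$ function $\tilde g$ to a full two-sided open neighbourhood of $q_0$ in $N$, with $\tilde g=0$ on $\piup(D)$. Minimality of $N$ further yields a holomorphic extension of $\tilde g$ into a wedge in $\mathbb{C}^{n+k-1}$ with edge on $N$, as in \cite{Tu88,Tu90,BR90}. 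The edge-of-the-wedge argument used in the proof of Lemma \ref{lem:3.1}, combined with $\tilde g|_{\piup(D)}=0$, then forces $\tilde g\equiv 0$; in particular $g=0$ on $\omega_-\cap\piup(V)$, and together with the first step this gives $g\equiv 0$ on $\piup(V)$.

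The main obstacle is this last unique-continuation step: upgrading vanishing on the real hypersurface $\piup(D)$ to vanishing on the open side $\omega_-\cap\piup(V)$. The Lorentzian $CR$-noncharacteristic singularity assumption of \S\ref{sec:5xa} is engineered precisely so that $\omega_-$ lies on the strictly pseudoconcave side of $\piup(D)$, which is what makes the Lewy-type extension applicable; after that, the unique-continuation mechanism is the same as the one already invoked in the proof of Lemma \ref{lem:3.1}.
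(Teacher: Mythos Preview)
Your outline has the right shape but misses two simplifications that the paper exploits, and the first of these is actually a gap in your argument when $k>1$.

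\textbf{The polycylinder computation does not transfer to the $CR$ base.} In Lemmas~\ref{lem51} and~\ref{lem41} the base $\omega$ is an open subset of $\mathbb{C}^n$, so closed polycylinders $D'\subset\omega$ make sense. Here $N$ has type $(n,k-1)$; when $k>1$ it is a proper $CR$ submanifold of $\mathbb{C}^{\nuup-1}$ of real dimension $2n+k-1<2(\nuup-1)$, and no polycylinder of $\mathbb{C}^{\nuup-1}$ lies inside $N$. The paper avoids this by testing $CR$-ness of $g$ in the weak sense: for every $\eta\in\varOmega^{2n+k-2}_0(W)\cap\Jt_N^{\nuup-1}(W)$ one has
\[
\int_N dg\wedge\eta=\int_M du\wedge dz_1\wedge\piup^*\eta=0,
\]
using only that $\piup$ is $CR$ and proper (so $\piup^*\eta$ has compact support) and that $du\in\Jt_M^1$. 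This is the correct replacement for Morera on a $CR$ manifold of positive codimension.

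\textbf{The Lewy extension step is unnecessary.} The paper observes that $W=\piup(V)\cup(\omega\setminus\piup(U))$ is already a full open neighborhood of $q_0$ in $N$, and defines $g$ on all of $W$ by setting it equal to zero on $W\setminus\piup(V)$. This extension is continuous (fibers shrink to points) and, by the weak computation above, $CR$ on the whole of $W$. Now $g$ vanishes on an \emph{open} subset of $W$, not merely on the hypersurface $\piup(D)$, and the connected component of $\piup(V)$ in $W$ meets this open set. The second clause of Lemma~\ref{lem:3.1} (weak unique continuation for continuous $CR$ functions on a minimal, locally embeddable $CR$ manifold) then gives $g\equiv 0$ directly. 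Your route through one-sided $CR$ extension across $\piup(D)$ and edge-of-the-wedge would require a Lewy/Tr\'epreau-type theorem for domains in $CR$ manifolds of higher codimension, which is more delicate than what you cite and is simply not needed here.
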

\begin{proof} First we note that $W=\piup(V)\cup(\omega\setminus\piup(U))$
is a neighborhood of $q_0$ in $N$. The function $g$, equal to the left
hand side of \eqref{eq:r1} for $w\in\piup(V)$ and $0$ on 
$W\setminus\piup(V)$ is continuous,
because  the fiber $\piup^{-1}(q)$ shrinks to a point when
$q\to \partial{\piup(V)}\cap\omega$. 
Since $\piup(V)$ is connected and its connected component in $W$ contains
an open subset where $g=0$, our contents follows by the
weak unique continuation principle (see Lemma\,\ref{lem:3.1})
if we show that $g$ is a $CR$ function on $W$. 
To this aim, it suffices to
show that
\begin{equation*}
  \int_N dg\wedge\eta=0,\quad\forall \eta\in\varOmega^{2n+k-2}_0(W)\cap\Jt_N^{n+k-1}(W),
\end{equation*}
where $\varOmega^{*}_0(W)$ means smooth exterior forms with compact support
in $W$. We note that 
$\piup^*\eta\in\varOmega^{2n+k-2}_0(V)\cap\Jt_M^{n+k-1}(W)$, because the map
$\piup$ is $CR$ and proper. Thus we obtain
\begin{equation*}
  \int_N dg\wedge\eta=\int_M du\wedge{dz_1}\wedge\piup^*\eta =0,
\end{equation*}
because $u$ is $CR$ on a neighborhood of the support of
${dz_1}\wedge\piup^*\eta\in\varOmega^{2n+k-2}_0(V)\cap\Jt_M^{n+k}(V)$. 
The proof is complete.
\end{proof}
By shrinking, we get
$2\,z_1\bar{z}_1\geq\im{z}_{\nuup}$ on $U$.
In particular, if $\psi$ is a smooth function of one complex variable
$\tauup$, with $\supp\psi\subset\{\im\tauup\geq{0}\}$, the function
$z_1^{-1}\psi(z_{\nuup})$ can be extended to a smooth function on $U$, vanishing to
infinite order on $\{z_1=0\}\cap{U}$. 
\begin{lem}\label{lemr32a}
If $\psi_i$, for $i=1,\hdots,\nuup$ are smooth fuctions of a complex variable
$\tauup$, with support contained in $\{\im\tauup\geq{0}\}$, then
\begin{equation}
  \label{eq:52a}
  \thetaup_1=dz_1+z_1^{-1}{\psi_1({z_{\nuup}})}d{\bar{z_{\nuup}}},
\;\hdots,\; \thetaup_{\nuup}=
dz_{\nuup}+z_1^{-1}{\psi_{\nuup}({z_{\nuup}})}d{\bar{z_{\nuup}}}
\end{equation}
(the functions $z_1^{-1}{\psi_i({z_{\nuup}})}$ are put $=0$ for $z_1=0$) 
generate the ideal sheaf $\Jt_{U'}'$ of a $CR$ structure of type $(n,k)$
in a neighborhood $U'$ of $p_0$ in $U$, which agree to infinite order with
the original one at $p_0$. 
\end{lem}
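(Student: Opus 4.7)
The plan is to transcribe, with only bookkeeping changes, the argument already used for the codimension-one version (Lemma~\ref{lem32a}); the structure of the $1$-forms is exactly the same, and the essential algebraic cancellations do not see the value of $k$.

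First I would verify that each coefficient $z_1^{-1}\psi_i(z_\nuup)$ extends to a smooth function on $U$ and vanishes to infinite order on $\{z_1=0\}$, in particular at $p_0$. This rests on the two ingredients already established: the inequality $2\,z_1\bar{z}_1\geq\im z_\nuup$ on $U$ (after a preliminary shrinking) and the support condition $\supp\psiup_i\subset\{\im\tauup\geq 0\}$. Together they force $\psiup_i(z_\nuup)$ to vanish to all orders on $\{z_1=0\}$, which kills the pole of $z_1^{-1}$. As a consequence the $\thetaup_i$ agree with $dz_i$ to infinite order at $p_0$, so $\thetaup_1,\dots,\thetaup_\nuup,\bar\thetaup_1,\dots,\bar\thetaup_n$ form a basis of $\mathbb{C}T_{p_0}^*M$, and, by continuity, of $\mathbb{C}T_p^*M$ for $p$ in a neighborhood $U'$ of $p_0$.

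Next I would set $\Jt'_{U'}=(\thetaup_1,\dots,\thetaup_\nuup)$ and observe that the rank count gives a distribution of the correct type $(n,k)$; the transversality $\Zt'\cap\overline{\Zt'}=0$ is automatic from the basis property above. The main step is to check formal integrability, i.e.\ that $d\thetaup_i\wedge\thetaup_1\wedge\cdots\wedge\thetaup_\nuup=0$ for each $i$. A direct computation gives
\begin{equation*}
 d\thetaup_i=z_1^{-1}\dfrac{\partial\psiup_i(z_\nuup)}{\partial z_\nuup}dz_\nuup\wedge d\bar z_\nuup-z_1^{-2}\psiup_i(z_\nuup)\,dz_1\wedge d\bar z_\nuup,
\end{equation*}
so $d\thetaup_i$ is a combination of two $2$-forms, each carrying exactly one $d\bar z_\nuup$ together with either $dz_1$ or $dz_\nuup$. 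Since $\thetaup_j-dz_j$ is a multiple of $d\bar z_\nuup$, the expansion of $\thetaup_1\wedge\cdots\wedge\thetaup_\nuup$ equals $dz_1\wedge\cdots\wedge dz_\nuup$ plus a sum of terms each containing $d\bar z_\nuup$; wedging such terms with $d\thetaup_i$ produces a repeated $d\bar z_\nuup$ and hence vanishes. Thus $d\thetaup_i\wedge\thetaup_1\wedge\cdots\wedge\thetaup_\nuup=d\thetaup_i\wedge dz_1\wedge\cdots\wedge dz_\nuup$, and the right-hand side is $0$ because $d\thetaup_i$ already contains both $dz_1$ and $dz_\nuup$ as possible factors.

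I do not anticipate a real obstacle here: the argument is a purely formal computation on exterior forms, and the most delicate point—smoothness and infinite-order vanishing of the perturbation at $p_0$—has already been isolated in the first paragraph. The agreement of the new structure with the original one to infinite order at $p_0$ then follows at once from the same infinite-order vanishing of $z_1^{-1}\psiup_i(z_\nuup)$.
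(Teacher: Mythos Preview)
Your proposal is correct and follows essentially the same approach as the paper: establish smoothness and infinite-order vanishing of $z_1^{-1}\psi_i(z_\nuup)$ from the support condition and the inequality $2z_1\bar z_1\ge\im z_\nuup$, deduce that $\thetaup_1,\dots,\thetaup_\nuup,\bar\thetaup_1,\dots,\bar\thetaup_n$ frame $\mathbb{C}T^*M$ near $p_0$, compute $d\thetaup_i$ exactly as you do, and conclude involutivity from $d\thetaup_i\wedge\thetaup_1\wedge\cdots\wedge\thetaup_\nuup=d\thetaup_i\wedge dz_1\wedge\cdots\wedge dz_\nuup=0$. Your explanation of why the wedge reduces to $d\thetaup_i\wedge dz_1\wedge\cdots\wedge dz_\nuup$ (via the repeated $d\bar z_\nuup$) is slightly more explicit than the paper's, but the argument is the same.
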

\begin{proof} By the condition 
on the supports, the functions 
$z_1^{-1}{\psi_i(z_{\nuup})}$ are smooth on $U$ and
vanishing to infinite for $z_1=0$, and in particular at $p_0$. 
Thus $\thetaup_1$, $\hdots$, $\thetaup_\nuup$, $\bar{\thetaup}_1$,
$\hdots$, $\bar{\thetaup}_n$ yield a basis of $\mathbb{C}T_pM$ for $p$
in a suitable neighborhood $U'$ of $p_0$, and agree with
$dz_1, \hdots, dz_{\nuup}, d\bar{z}_1,\hdots,d\bar{z}_n$ to infinite order
at $p_0$. 
We have moreover
\begin{equation*}
  d\thetaup_i=z_1^{-1}
\dfrac{\partial{\psi_i({z_{\nuup}})}}{\partial{{z_{\nuup}}}}d{z_{\nuup}}\wedge
d{\bar{z}_{\nuup}}-{z_1^{-2}}\psi_i({z_{\nuup}})dz_1\wedge{d}{\bar{z}_{\nuup}}.
\end{equation*}
Hence
\begin{equation*}
  d\thetaup_i\wedge\thetaup_1\wedge\cdots\wedge\thetaup_{\nuup}=
d\thetaup_i\wedge{dz_1}\wedge\cdots\wedge{d}z_{\nuup}=0
\end{equation*}
shows that $\Jt'_{U'}$ is involutive. The proof is complete.
\end{proof}
Let us fix a sequence of distinct complex numbers
$\{\tauup_j\}$, such that
\begin{list}{}{}
\item $\im\tauup_j>0$ for all $j$, 
\quad $\tauup_j\to{0}$, \quad $\{w_{n}=\tauup_j\}
\cap\omega\neq\emptyset$ for all $j$.
\end{list}  
For each $j$ we choose an open disk
$\Delta_j$  in $\mathbb{C}$, centered
at $\tauup_j$, in such a way that
$\bar{\Delta}_j\cap{\bigcup}_{i\neq{j}}\bar{\Delta_i}=\emptyset$. Next
we choose
balls $B_j$ in $\mathbb{C}^{\nuup-2}$ with 
\begin{equation*}
  K_j=\{q\in\omega\mid (w_2(q),\hdots,w_{\nuup-1}(q))\in{B}_j,\;\; w_{\nuup}\in
\bar{\Delta}_j\}\Subset\omega.
\end{equation*}
We set $\partial_0K_j=\{q\in{K}_j\mid w_\nuup(q)\in\partial\Delta_j\}$,
$A_j=\piup^{-1}(K_j)$. Note that the $A_j$'s are compact because
$\piup$ is proper.\par
Then
we take the functions $\psi_i$ in Lemma\,\ref{lemr32a} in such a way that,
for suitable forms $\eta_j\in\varOmega_0^{n-1}(B_j)$, we have 
\begin{gather*}
  \supp\psi_i={\bigcup}_{j=0}^{\infty}\bar{\Delta}_{i+j{(\nuup+1)}},\quad
\text{for}\quad i=1,\hdots,\nuup ,\\
 c_{i+j(\nuup+1)}=\int_{A_{i+j(\nuup+1)}}
z_1^{-1}\psi_i(z_{\nuup})
d\bar{z}_{\nuup}\wedge{d}z_1\wedge\cdots\wedge{d}z_{\nuup}\wedge\piup^*\eta_j
\;\;\text{is real and $>0$.}
\end{gather*}
\par
Let $u$ be a $CR$ function on a
connected open neighborhood $V$ of $p_0$ in
$U'$ for the structure defined by
\eqref{eq:52a}.
Since $\Jtm$ and $\Jt_{U'}'$ agree to infinite order on $\piup(U')$ 
outside $E=\{w_\nuup\in{\bigcup}_{i}\supp{\psi_i}\}$, and this set
does not disconnect $\piup(V)$, by the argument of Lemma\,\ref{lemr54} 
we have \eqref{eq:r1} for all $q$ in the complement in $\piup(V)$ of
$E$. 
Thus we obtain
\begin{align*}
  0&=\int_{\partial{K}_j}\left(\oint_{\piup^{-1}(q)}
\!\!\!
u \,dz_1\right){dw}_2\wedge\cdots\wedge{d}w_{\nuup}\wedge\eta_j
=\int_{\partial{A}_j}u\, dz_1\wedge\cdots\wedge{dz}_{\nuup}\wedge
\piup^*\eta_j\\
&=\int_{A_j}du\wedge dz_1\wedge\cdots\wedge{dz}_{\nuup}\wedge
\piup^*\eta_j,\qquad\text{yielding}
\end{align*}
\begin{equation}\label{eq:578}
  \int_{A_i+j{(\nuup+1)}}\dfrac{\partial{u}}{\partial{z}_i}\,
z_1^{-1}\psi_i(z_{\nuup})\,d\bar{z}_{\nuup}\wedge{d}z_1\wedge
\cdots\wedge{dz}_{\nuup}\wedge\piup^*\eta_{i+j{(\nuup+1)}}
=0,
\end{equation}
where, to compute $\dfrac{\partial{u}}{\partial{z}_i}$, we consider any
$\mathcal{C}^1$-extension of $u$ as a function of 
the complex variables $z_1,\hdots,z_{\nu}$ for which $\bar\partial{u}=0$
at all points of $V$. But
\begin{equation*}
  c_{i+j{(\nuup+1)}}^{-1}\int_{A_i+j{(\nuup+1)}}\dfrac{\partial{u}}{\partial{z}_i}\,
z_1^{-1}\psi_i(z_{\nuup})\,d\bar{z}_{\nuup}\wedge{d}z_1\wedge
\cdots\wedge{dz}_{\nuup}\wedge\piup^*\eta_{i+j{(\nuup+1)}}
\end{equation*}
converges to $\dfrac{\partial{u}(p_0)}{\partial{z}_i}$ when $j\to+\infty$. 
Therefore $\dfrac{\partial{u}(p_0)}{\partial{z}_i}=0$ for $i=1,\hdots,
\nuup$.
Together with \eqref{eq:d1}, this shows that $du(p_0)=0$. \par
We have proved:
\begin{thm} Let $M$, $N$, be $CR$ manifolds of types $(n,k)$ and $(n,k-1)$,
respectively, with $k\geq{1}$. Assume that $N$ is minimal and that
there is a $CR$ map
$\piup:M\to{N}$ having a Lorentzian $CR$-non characteristic singularity
at $p_0\in{M}$. Then
we can find a new $CR$ structure of type $(n,k)$ on 
an open neighborhood $U$ of $p_0$ in $M$, which agrees with
the original one to infinite order at $p_0$, 
such that,
if $\Zt$ is the distribution of $(0,1)$-vector fields for this
new structure, 
 all solutions
$u\in\mathcal{C}^1$ on a neighborhood of $p_0$ to the homogeneous
system \eqref{eq:d1} satisfy $du(p_0)=0$.
\end{thm}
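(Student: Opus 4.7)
The plan is to assemble the ingredients already developed in \S\ref{sh} into a proof. First I would reduce to the locally $CR$-embeddable case: by Lemma\,\ref{lem25}, after modifying the original $CR$ structure to infinite order at $p_0$, one may assume $M$ is locally $CR$-embeddable near $p_0$ (and similarly $N$ near $q_0=\piup(p_0)$, noting that minimality is preserved under such modifications provided they are chosen compatibly with $\piup$). Then I would choose the $CR$-charts $(U;z_1,\hdots,z_{\nuup})$ and $(W;w_2,\hdots,w_{\nuup})$ described just before Lemma\,\ref{lem516}, in which the Lorentzian $CR$-noncharacteristic singularity is put into the normal form $\im z_{\nuup}=z_1\bar z_1-\sum_{i=2}^{\nuup}z_i\bar z_i+O(|z|^3)$ with $\piup^*w_j=z_j$, and shrink $U$ so that Lemma\,\ref{lem516} applies with $U'=U$.

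Next I would invoke Lemma\,\ref{lemr32a} to build the perturbed ideal sheaf $\Jt'_{U'}$ generated by the $1$-forms $\thetaup_j=dz_j+z_1^{-1}\psi_j(z_{\nuup})d\bar z_{\nuup}$, choosing the smooth functions $\psi_1,\hdots,\psi_{\nuup}$ with supports in $\{\im\tauup\geq 0\}$ distributed along the disjoint disks $\bar\Delta_{i+j(\nuup+1)}$ accumulating at $0$, exactly as done above the theorem. By Lemma\,\ref{lemr32a}, $\Jt'_{U'}$ defines a $CR$ structure of type $(n,k)$ on a neighborhood $U'$ of $p_0$ that agrees to infinite order with $\Jtm$ at $p_0$; this is the structure whose $(0,1)$-distribution will be the required $\Zt$.

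Now let $u\in\mathcal{C}^1(V)$ be a solution of $Zu=0$ for all $Z\in\Zt$ on a connected open neighborhood $V$ of $p_0$. The core observation is that on the complement of the exceptional set $E=\{w_{\nuup}\in\bigcup_i\supp\psi_i\}$ the two ideals $\Jtm$ and $\Jt'_{U'}$ coincide, so Lemma\,\ref{lemr54} gives the vanishing of $g(q)=\oint_{\piup^{-1}(q)}u\,dz_1$ on $\piup(V)\setminus E$; since $E$ does not disconnect $\piup(V)$, and since $N$ is minimal (so that the weak unique continuation principle of Lemma\,\ref{lem:3.1} applies to the continuous $CR$ extension of $g$), $g\equiv 0$ on $\piup(V)$. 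Applying Stokes' theorem to each compact set $A_j=\piup^{-1}(K_j)$ with the auxiliary forms $\eta_j$, exactly as in the display \eqref{eq:578}, yields the integral identities which upon normalization by the positive constants $c_{i+j(\nuup+1)}$ converge to $\partial u(p_0)/\partial z_i$ for $i=1,\hdots,\nuup$; hence all these partial derivatives vanish. Combined with $Zu(p_0)=0$ this forces $du(p_0)=0$.

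The only step that is not completely routine is checking that the unique continuation argument genuinely applies: one must verify that $g$ extends continuously across the piece of $\partial\piup(V)$ lying inside $\omega$ (this uses the fact that the fibers $\piup^{-1}(q)$ shrink to points there, cf.\ Lemma\,\ref{lem516}) and that the extended $g$ is actually $CR$ on the neighborhood $W=\piup(V)\cup(\omega\setminus\piup(U))$ of $q_0$ in $N$. Both of these were established in the proof of Lemma\,\ref{lemr54}, so the main obstacle reduces to having set up the geometry of $\piup$ and the perturbation supports compatibly with one another; once this is in place, the remainder is a verbatim repetition of the argument already carried out in \S\ref{sh}.
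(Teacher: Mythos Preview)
Your proposal is correct and follows essentially the same route as the paper: the core argument (Lemmas\,\ref{lem516}, \ref{lemr54}, \ref{lemr32a}, the disk construction, and the Stokes computation leading to \eqref{eq:578}) is exactly the ``discussion above'' that the paper invokes, and the reduction to the locally $CR$-embeddable case is the same idea. One point where the paper is more explicit than your sketch: rather than applying Lemma\,\ref{lem25} separately to $M$ and $N$ and hoping the results are ``chosen compatibly with $\piup$'', the paper builds the compatibility in by hand---it first constructs formal power series solutions $\{w\}_2,\hdots,\{w\}_{\nuup}$ on $N$ at $q_0$, sets $\{z_j\}=\piup^*\{w\}_j$ for $j\geq 2$ (automatically formal solutions on $M$ since $\piup$ is $CR$), adds an independent $\{z_1\}$, and then realizes these by smooth functions with $z_j=\piup^*w_j$ for $j\geq 2$; this guarantees that $\piup$ remains $CR$ for the new structures and that the charts satisfy the required pullback relation.
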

\begin{proof}
The discussion above proves the theorem in the case where both
$M$ and $N$ are locally $CR$-embeddable at $p_0$ and at $q_0=\piup(p_0)$,
respectively. In general, we can reduce to this case by taking
formal power series solutions $\{z\}_1,\hdots,\{z\}_{\nuup}$ at $p_0$,
$\{w\}_2,\hdots,\{w\}_{\nuup}$ at $q_0$, to the homogeneous tangential
Cauchy-Riemann systems on $M$ and $N$, respectively, with
$\{z_j\}=\piup^*\{w\}_j$ for $j=2,\hdots,\nuup$. Then we take
smooth functions $w_2,\hdots,w_{\nuup}$ on $N$ having Taylor series
$\{w\}_2,\hdots,\{w\}_{\nuup}$ at $q_0$, define $z_j=\piup^*w_j$ for
$j=2,\hdots,\nuup$, and choose a smooth function $z_1$ on $M$ 
with Taylor series $\{z_1\}$ at $p_0$. By restricting to 
a suitable neighborhood $U$ of $p_0$ in $M$ and $W$ of $q_0$ in $N$,
we obtain $CR$-charts $(U;z_1,\hdots,z_{\nuup})$ and
$(W;w_2,\hdots,w_{\nuup})$ for new $CR$ structures which agree to
infinite order with the original ones at $p_0$ in $M$ and at
$q_0$ in $N$. The same map $\piup$ has a Lorentizian $CR$-noncharacteristic
singularity at $p_0$ also for the new locally $CR$-embeddable 
$CR$ structures, so that
the previous discussion applies. 
\end{proof}
\subsection{Example} Let $n\geq{1}$, $k\geq{1}$,  
$\nuup=n+k$, and
$N$ any $CR$ manifold of type $(n,k-1)$, contained in an
open neighborhood $G$ of $0$ in $\mathbb{C}^{\nuup-1}$, and minimal.
Let $w_1,\hdots,w_{\nuup-1}$ be the canonical holomorphic coordinates
of $\mathbb{C}^{\nuup-1}$ and assume that $dw_1$ and $d\bar{w}_1$
are linearly independent on $N$.\par
Let $\phiup:\mathbb{C}^{\nuup}\ni (z_1,\hdots,z_{\nuup})\to
(z_1,\hdots,z_{\nuup-1})\in\mathbb{C}^{\nuup-1}$ be the projection onto
the first $\nuup-1$ coordinates. If
\begin{equation*}
  M=\big\{z\in\mathbb{C}^{\nuup}\mid \phiup(z)\in{N},\;
\im{z}_1+{\sum}_{i=1}^{\nuup-1}z_i\bar{z}_i=z_\nuup\bar{z}_\nuup\big\},
\end{equation*}
then $M$ is a minimal $CR$ submanifold of type $(n,k)$ of
$\piup^{-1}(G)\subset\mathbb{C}^{\nuup}$, and 
the restriction of $\phiup$ describes a $CR$ map
$\piup:M\to{N}$ which has at $0$ a Lorentzian $CR$-noncharacteristic 
singularity. \par
In particular, there are $CR$ structures on a minimal Lorentzian
$CR$ manifol $M$ of arbitrary $CR$ codimension such that
a point $p_0\in{M}$ is critical for all $CR$ functions defined
on a neighborhood of $p_0$. 
\section{Tangential Cauchy-Riemann complexes 
and a global example}
\label{sec:4}
Throughout this section,
$M$ is a smooth $CR$ manifold, of positive $CR$ dimension $n$,
and arbitrary $CR$ codimension $k\geq{0}$.  
We set $\nuup=n+k$,
and denote by ${\Zt}$ the distribution of smooth complex vector fields
of type $(0,1)$ on $M$. \par
We recall that $\Ot_M(U)$ is the space of $CR$ functions
of class $\mathcal{C}^1$ on $U^{\text{open}}\subset{M}$. We set
$\Ot_M^{\infty}(U)=\Ot_M(U)\cap\mathcal{C}^{\infty}(U)$, and
$\Ot_M^{\infty}(\bar{U})=\Ot_M(U)\cap\mathcal{C}^{\infty}(\bar{U})$
for the restrictions to $\bar{U}$ of smooth functions on $M$, which are
$CR$ in $U$.
Likewise, when $\Omega$ is an open subset of a complex manifold $\mathbb{X}$,
we write ${\Ot}(\bar{\Omega})$ for
$\mathcal{C}^{\infty}(\bar{\Omega})\cap{\Ot}(\Omega)$.
\subsection{Definition of the $\bar\partial_M$-complexes}
Let $\Jtm$ be the ideal sheaf, corresponding to the characteristic
distribution $\Zt$ of $(0,1)$-vector fields on $M$.
Formal integrability of $\Zt$ is equivalent 
(see Lemma\,\ref{lem52}) to
\begin{equation}
  \label{eq:212}
  d\Jtm\subset\Jtm.
\end{equation}
This implies that also $d(\Jtm)^a\subset(\Jtm)^{a}$, where,
for each positive integer $a$,  $(\Jtm)^{a}$ is the
$a$-th exterior power of the ideal $\Jtm$, and we set
$(\Jtm)^0=\varOmega^*_M$. 
We can define cochain
complexes on $M$ by considering the quotients
$\mathscr{Q}^{a,*}_M=(\Jtm)^{a}/(\Jtm)^{a+1}$ and the map
$\bar{\partial}_M:\mathscr{Q}^{a,*}_M\to\mathscr{Q}^{a,*}_M$ induced 
on the quotients by
the exterior differential. We have \begin{equation*}
\mathscr{Q}^{a,*}_M={\bigoplus}_{q\geq{0}}\mathscr{Q}^{a,q}_M,\quad\text{
with 
$\mathscr{Q}^{a,q}_M=((\Jtm)^a\cap\varOmega_M^{a+q})/((\Jtm)^{a+1}
\cap\varOmega_M^{a+q})$},
\end{equation*}
and $\bar{\partial}_M(\mathscr{Q}^{a,q}_M)\subset\mathscr{Q}^{a,q+1}_M$.
This indeed was the intrinsic definition for the tangential Cauchy-Riemann
complexes on $CR$ manifolds given in \cite{N84}.\par
Let $\mathcal{Z}^{a,q}(U)=\{f\in\mathscr{Q}^{a,q}(U)\mid \bar{\partial}_Mf=0\}$ 
and $\mathcal{B}^{a,q}(U)=\bar{\partial}_M(\mathscr{Q}^{a,q-1}_M(U))$.
The quotient $\mathrm{H}^{a,q}(U)=\mathcal{Z}^{a,q}(U)/\mathcal{B}^{a,q}(U)$
is the 
\emph{cohomology group} of the smooth cohomology of $\bar{\partial}_M$ on $U$
in bidegree $(a,q)$. We set
\begin{equation*}
  \mathrm{H}^{a,q}(p_0)={\varinjlim}_{\; p_0\in{U}^{\text{open}}}\mathrm{H}^{a,q}(U)
\end{equation*}
for the group of germs of bidegree $(a,q)$-cohomology classes at $p_0$. 
\par\smallskip
Let us give a more explicit description of the equations involved in the
$\bar{\partial}_M$-complexes. 
\par
An element of $\mathcal{Z}^{a,q}(U)$ has
a representative $f\in\varOmega^{p+q}_M(U)$. The
conditions that $f\in(\Jtm)^a$ and that its class $[f]\in\mathcal{Z}^{a,q}(U)$ 
satisfies the 
integrability condition $\bar{\partial}_M[f]=0$
are expressed by
\begin{equation}\label{eq:213}
\begin{cases}
f\wedge\etaup_1\wedge\cdots\wedge\etaup_{\nuup-a+1}=0,\;
 \forall \etaup_1,\hdots,\etaup_{\nuup-a+1}\in\Jt_M^1(U),
 \\
 df\wedge\etaup_1\wedge\cdots\wedge\etaup_{\nuup-a+1}=0,\;
 \forall \etaup_1,\hdots,\etaup_{\nuup-a+1}\in\Jt_M^1(U),
 \end{cases}
\end{equation}
and the equation $\bar{\partial}_M\alpha=[f]$ for $\alpha\in\mathscr{Q}^{a,q-1}(U)$
is equivalent to finding $u\in\varOmega^{a+q-1}_M(U)$ such that 
\begin{equation}\label{eq:214}
 \begin{cases}
u\wedge\etaup_1\wedge\cdots\wedge\etaup_{\nuup-a+1}=0,\;
 \forall \etaup_1,\hdots,\etaup_{\nuup-a+1}\in\Jt_M^1(U),\\
 (du-f)\wedge\etaup_1\wedge\cdots\wedge\etaup_{\nuup-a+1}=0,\;
 \forall \etaup_1,\hdots,\etaup_{\nuup-a+1}\in\Jt_M^1(U).
 \end{cases}
\end{equation}
Both equation \eqref{eq:213} and \eqref{eq:214} are meaningful when 
$f,u$ are currents, and therefore we can consider the 
$\bar{\partial}_M$-complexes on currents, or require different degrees of
regularity on the data and the solution.
\subsection{Absence of Poincar\'e lemma}\label{sec:62}
In general, the $\bar\partial_M$-complexes are not acyclic (see e.g.
\cite{AFN81,AH72,HN06, N84}). In fact, the perturbations we used in the
previous section to deduce non complete-integrability results utilize 
elements of $\mathrm{H}^{0,1}(p_0)$. 
The arguments of \S\ref{sec4} provide simpler proofs of
the absence of the Poincar\'e lemma in some special cases.
We have e.g. (see \S\ref{sec4})
\begin{prop}\label{prop61}
Let $M$ be a $CR$ manifold of type $(n,1)$, which is locally $CR$-embeddable 
and Lorentzian at $p_0$, and let $(U;z_1,\hdots,z_{\nuup})$,
with $\nuup=n+1$,  be a $CR$ chart
centered at $p_0$ for which \eqref{eq:41a} holds and
$2z_1\bar{z}_1\geq\im{z}_{\nuup}$  on $U$. \par
 Let $\psi$ be a smooth function of one complex variable $\tauup$,
with compact support contained in 
$\{\im\tauup\geq{0}\}$. Then 
$\omegaup=z_1^{-1}\psi(z_{\nuup})d\bar{z}_{\nuup}$, continued by $0$ where
$z_1=0$, defines a smooth $\bar{\partial}_M$-closed $1$-form. A
necessary and sufficient condition for $\omegaup$ to be cohomologous
to $0$ is that
\begin{equation}\label{mom}
  \iint \tauup^h\psi(\tau)d\tau\wedge{d}\bar\tau=0,\;\forall h\in\mathbb{Z},\;
h\geq{0}.
\end{equation}
\end{prop}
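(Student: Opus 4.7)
The plan is to handle the two implications separately, constructing a primitive via the Cauchy transform for sufficiency and applying fiber integration (as in Lemmas \ref{lem51} and \ref{lem41}) for necessity.

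\emph{Sufficiency.} Suppose all moments vanish. I would define
$$\hat\psi(\tauup)=\frac{1}{2\pi i}\iint\frac{\psi(\zetaup)}{\zetaup-\tauup}\,d\zetaup\wedge d\bar\zetaup,$$
whose Laurent expansion at infinity has coefficients proportional to the moments, so $\hat\psi$ vanishes on the unbounded component of $\mathbb{C}\setminus\mathrm{supp}\,\psi$. Being smooth with $\partial\hat\psi/\partial\bar\tauup=\psi$, its support is compact, contained in $\mathrm{supp}\,\psi\subset\{\im\tauup\geq 0\}$, and it vanishes to infinite order on the real axis. By the same argument as in Lemma \ref{lem32a}, $u=z_1^{-1}\hat\psi(z_\nuup)$ (set to zero where $z_1=0$) extends to a smooth function on $U$, and direct differentiation gives $du\equiv\omegaup\pmod{\Jtm^1}$, so $\bar\partial_M u=[\omegaup]$.

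\emph{Necessity.} Suppose $\omegaup=\bar\partial_M u$ on some neighborhood $V$ of $p_0$, equivalently $du=\omegaup+\sum_{i=1}^{\nuup}f_i\,dz_i$ for smooth $f_i$. As in Lemma \ref{lem51}, the map $\piup(p)=(z_2(p),\ldots,z_\nuup(p))$ has circle fibers $M_w=\piup^{-1}(w)$ for $w\in\omega$ near $0$. Define $F(w)=\oint_{M_w} u\,dz_1$. I would apply Stokes' theorem to $\piup^{-1}(K)$ for a polycylinder $K\subset\piup(V)\cap\omega$, paired with the closed top-degree forms $\alpha=dz_1\wedge\cdots\wedge dz_\nuup\wedge d\bar z_2\wedge\cdots\wedge d\bar z_{\nuup-1}$ and the variants obtained by swapping one $d\bar z_j$ with $j<\nuup$ for $d\bar z_\nuup$. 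Since $dz_i\wedge\alpha=0$ for $i=1,\ldots,\nuup$, one has $du\wedge\alpha=\omegaup\wedge\alpha$. Integrating along the circle fibers with $\oint_{M_w}z_1^{-1}dz_1=2\pi i$ and applying Stokes on $K$ yields
$$\frac{\partial F}{\partial\bar w_k}=0\;(2\leq k<\nuup),\qquad \frac{\partial F}{\partial\bar w_\nuup}=2\pi i\,\psi(w_\nuup).$$

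To finish, by the argument of Lemma \ref{lem51} $F$ extends continuously to $\partial\omega\cap B$ and vanishes there (the fibers collapse to points). Restricting to the complex line $w_2=\cdots=w_{\nuup-1}=0$, the function $G(\tauup)=F(0,\ldots,0,\tauup)$ is smooth on a neighborhood of $0$ in $\omega_0=\{\tauup:(0,\ldots,0,\tauup)\in\omega\}$, satisfies $\partial G/\partial\bar\tauup=2\pi i\,\psi(\tauup)$, and vanishes on a smooth arc of $\partial\omega_0\cap B$. Since $G$ is holomorphic off $\mathrm{supp}\,\psi$, boundary uniqueness (Schwarz reflection across the arc where $G=0$, followed by unique continuation) forces $G\equiv 0$ on the connected component of $\omega_0\setminus\mathrm{supp}\,\psi$ touching this arc. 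Choosing a smooth simple closed curve $\Gamma$ in this component that encircles $\mathrm{supp}\,\psi$, Stokes' theorem gives
$$2\pi i\iint\tauup^h\psi(\tauup)\,d\tauup\wedge d\bar\tauup=-\oint_\Gamma\tauup^h G\,d\tauup=0$$
for every $h\geq 0$. The main technical difficulty will be in the necessity step: the single scalar function $F$ on the base must encode precisely the moment data of $\psi$, and the pseudoconcave geometry of $\partial\omega\cap B$ at $q_0$ must be leveraged via boundary uniqueness to propagate the vanishing of $G$ across the relevant slice.
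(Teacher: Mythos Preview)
Your approach coincides with the paper's: push $u$ down by fiber integration to obtain $g(w)=\frac{-1}{2\pi i}\oint_{M_w}u\,dz_1$ on the base, verify that $\bar\partial g=\psi(w_n)\,d\bar w_n$ with $g=0$ on $\partial\omega\cap B$, then restrict to the slice $w_2=\cdots=w_{\nuup-1}=0$ and conclude that $\psi$ admits a compactly supported $\bar\partial$-primitive, which is exactly \eqref{mom}; your sufficiency via the Cauchy transform is the standard converse the paper leaves implicit. One caveat worth tightening: the paper assumes $u\in\mathcal{C}^1(U)$ on the full chart, not on an arbitrarily small $V$, and this is essential---if $\mathrm{supp}\,\psi$ misses $z_{\nuup}(V)$ then $\omegaup|_V=0$ and no curve in your slice can encircle $\mathrm{supp}\,\psi$, so the necessity argument must be run on $U$ (also, $\mathrm{supp}\,\hat\psi$ lies in the polynomial hull of $\mathrm{supp}\,\psi$, not in $\mathrm{supp}\,\psi$ itself, though this hull is still contained in $\{\im\tauup\geq 0\}$ so your conclusion stands).
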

\begin{proof}
Assume indeed that there is $u\in\mathcal{C}^1(U)$ such that 
$\bar\partial_Mu=[\omegaup]$. We keep the notation of \S\ref{sec4}
for $\piup,\;\omega,\; B$, and use integration on the fiber to define
\begin{equation*}
g(w)=\tfrac{-1}{2\pi i}\oint_{M_w}u\,dz_1,\quad\text{i.e.}\quad 
  \int_{\omega}g\phiup=\int_Uu\wedge 
dz_1\wedge\piup^*\phiup,\quad
\forall \phiup\in\varOmega^{2n}.
\end{equation*}
Clearly $g=0$ on $\partial\omega\cap{B}$ and furthermore we obtain
\begin{align*}
  \int_{\omega}dg\wedge\etaup=
\tfrac{-1}{2\pi{i}}\int_Udu\wedge{dz_1}\wedge\piup^*\etaup
\qquad\qquad\qquad\qquad\qquad
\\
=
\tfrac{-1}{2\pi{i}}\int z_1^{-1}\psi(z_{\nuup})d\bar{z}_{\nuup}\wedge{dz}_1\wedge
\piup^*\etaup
=\int_{\omega}\psi(w_n)d\bar{w}_n\wedge\etaup\qquad\\
\forall \etaup\in\varOmega^{n-1,n}_0(\omega).
\end{align*}
Thus $g$ satisfies
\begin{equation*}
  \begin{cases}
    \bar\partial{g}=\psi(w_n)d\bar{w}_n,&\text{on $\partial\omega$},\\
g=0, &\text{on $\partial\omega\cap{B}$}.
  \end{cases}
\end{equation*}
These equations imply that $\tauup\to g(0,\tauup)$ has compact support 
and hence that the equation $\partial{v}/\partial\bar{\tauup}=\psi(\tauup)$
has a solution with compact support. This is equivalent to the momentum
conditions \eqref{mom}
in the statement.
\end{proof}
Wy can repeat the same argument to show that the $\bar\partial_M$ complexes
are not acyclic in dimension $q$ when $M$ is 
strictly $q$-pseudoconvex at $p_0$.
Still restraining to type $(n,1)$ this condition means that, for
a suitable
$CR$ chart $(U;z_1,\hdots,z_{\nuup})$ centered at $p_0$ we have
\begin{equation}\label{eq:qx}
  \im{z}_{\nuup}+{\sum}_{i=q+1}^{\nuup}z_i\bar{z}_i={\sum}_{i=1}^qz_i\bar{z}_i+
O(|z|^3)\quad\text{on $U$}. 
\end{equation}
Here $\nuup=n+1$. By taking $U$ small we can assume that
$\im{z}_{\nuup}\leq 2{\sum}_{i=1}^qz_i\bar{z}_i$ on $U$. 
Set $\ell=\nuup-q$. By taking $U$ sufficiently small
we obtain a proper map
\begin{equation}\label{eq:pq}
  \piup:U\ni p\to (z_{q+1}(p),\hdots,z_{\nuup}(p))\in\omega\subset\mathbb{C}^{\ell},
\end{equation}
with $\omega$ the complement of a strictly convex open subset in an open
ball $B$ of $\mathbb{C}^{\ell}$, and $M_{w}\sim{S}^{2q-1}$ for $w\in\ring{\omega}$
and $M_{w}\sim{\text{a point}}$ for $w\in\partial\omega\cap{B}$. 
Let 
\begin{equation*}K_{q-1}(z_1,\hdots,z_q)
=\dfrac{{\sum}_{i=1}^q\bar{z}_i\,
\omegaup_i(z_1,\hdots,z_q)}{({\sum_{i=1}^qz_i\bar{z}_i})^q}\;\;\text{with}
\;\;
\omegaup_i(z_1,\hdots,z_q)=\dfrac{\partial}{\partial\bar{z}_i}\rfloor 
d\bar{z}_1\wedge\cdots{d\bar{z}_q}.
\end{equation*}
\begin{prop}
Let $M$ be a $CR$ manifold of type $(n,1)$, which is locally $CR$-embeddable 
and strictly $q$-pseudoconvex at $p_0$. Take
a $CR$-chart $(U,z_1,\hdots,z_{\nuup})$ with \eqref{eq:qx},
$\im{z}_{\nuup}\leq{2}z_1\bar{z}_1$ on $U$, and let
$\piup:M\to\omega$ be given by \eqref{eq:pq}.\par
If $\psi$ is a smooth function with compact support 
of one complex variable $\tauup$, with
$\supp\psi\subset\{\im\tauup\geq{0}\}$,  then 
$\omegaup=\psi(z_{\nuup})d\bar{z}_{\nuup}\wedge{K}_{q-1}(z_1,\hdots,z_q)$, 
continued by $0$ where
$z_1=0$, defines a smooth $\bar{\partial}_M$-closed $q$-form on $U$,
and \eqref{mom} is a 
necessary and sufficient condition for $\omegaup$ to be cohomologous
to $0$.
\end{prop}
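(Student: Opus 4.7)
The plan is to transcribe the argument of Proposition~\ref{prop61} from the $(0,1)$-form built on the scalar kernel $z_1^{-1}\psi(z_\nuup)$ to a $(0,q)$-form built on the Bochner--Martinelli kernel $K_{q-1}$, and to replace integration over the circular fibers $M_w\simeq S^{1}$ by integration over the spheres $M_w\simeq S^{2q-1}$. First I would verify that $\omegaup$ is a smooth $\bar\partial_M$-closed $q$-form: the inequality $\im z_\nuup\leq 2\sum_{i=1}^{q}z_i\bar z_i$ on $U$ (a consequence of \eqref{eq:qx} after shrinking) together with the fact that $\psi\in\mathcal{C}^\infty_c(\mathbb{C})$ with support in $\{\im\tauup\geq 0\}$ vanishes to infinite order on $\{\im\tauup=0\}$ show that $\psi(z_\nuup)$ vanishes to infinite order along $\{\sum_{i=1}^q z_i\bar z_i=0\}$, compensating the $q$-th order pole of $K_{q-1}$ there; combined with $\bar\partial K_{q-1}=0$ on $\mathbb{C}^q\setminus\{0\}$, this gives $d\omegaup\wedge dz_1\wedge\cdots\wedge dz_\nuup=0$, so $d\omegaup\in\Jtm$ and $[\omegaup]\in\mathrm{H}^{0,q}(U)$.

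For necessity of \eqref{mom}, suppose $\bar\partial_M u=[\omegaup]$ for some $u\in\mathcal{C}^1(U)$. Following the scheme of Proposition~\ref{prop61}, define
\begin{equation*}
g(w)=\int_{M_w}u\wedge dz_1\wedge\cdots\wedge dz_q\qquad(w\in\omega),
\end{equation*}
which extends continuously by $0$ to $\partial\omega\cap B$ since $M_w$ collapses to a point there. Properness of $\piup$ and Stokes then yield
\begin{equation*}
\int_\omega dg\wedge\eta=\int_U\omegaup\wedge dz_1\wedge\cdots\wedge dz_q\wedge\piup^*\eta
\end{equation*}
for test forms $\eta\in\varOmega_0^{2\ell-1}(\omega)$; the Bochner--Martinelli identity, asserting that $\int_{S^{2q-1}}K_{q-1}\wedge dz_1\wedge\cdots\wedge dz_q$ is a nonzero constant $c_q$, evaluates the inner fiber integration to $c_q\,\psi(w_\ell)d\bar w_\ell\wedge\eta$, so $\bar\partial g=c_q\psi(w_\ell)d\bar w_\ell$ on $\omega$ and $g\equiv 0$ on $\partial\omega\cap B$. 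Restricting to the slice $\{w_1=\cdots=w_{\ell-1}=0\}$ produces a compactly supported $h(\tauup)$ solving $\partial h/\partial\bar\tauup=c_q\psi$, which exists precisely when the moment relations \eqref{mom} hold.

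For sufficiency, assuming \eqref{mom}, I would solve $\partial v/\partial\bar\tauup=\psi$ by the Cauchy transform $v=\hat\psi$; vanishing of the nonnegative moments forces $v$ to be compactly supported, and since $v$ is holomorphic wherever $\psi=0$, analytic continuation on the connected open set $\{\im\tauup<0\}$ (where $v$ vanishes outside a compact set) yields $v\equiv 0$ on $\{\im\tauup\leq 0\}$. Hence $v$ vanishes to infinite order along $\{\im\tauup=0\}$, so $u:=v(z_\nuup)\,K_{q-1}(z_1,\dots,z_q)$ extends smoothly by $0$ across $\{z_1=\cdots=z_q=0\}$ by the same argument used for $\omegaup$, and a direct computation shows $\bar\partial u=\psi(z_\nuup)d\bar z_\nuup\wedge K_{q-1}=\omegaup$ on the complement, giving $\bar\partial_M[u]=[\omegaup]$.

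The principal obstacle I foresee is the fiber-integration step in the necessity argument: the fibers $M_w$ are only $O(|z|^3)$-perturbations of the round spheres $\{\sum_{i=1}^qz_i\bar z_i=\im w_\ell+\sum_{i=q+1}^{\nuup-1}w_i\bar w_i\}$, and justifying that $\int_{M_w}K_{q-1}\wedge dz_1\wedge\cdots\wedge dz_q$ still equals the standard Bochner--Martinelli constant requires a homotopy argument exploiting $\bar\partial K_{q-1}=0$ off the origin, together with uniform estimates to push the Stokes identity through the singular locus of $K_{q-1}$ in the slice $(z_1,\dots,z_q)$.
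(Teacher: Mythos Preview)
Your proposal is correct and follows essentially the same route as the paper: reduce to Proposition~\ref{prop61} with Cauchy's kernel replaced by the Bochner--Martinelli kernel $K_{q-1}$, and the circle fibers by the $(2q-1)$-sphere fibers. The ``principal obstacle'' you flag is precisely the one point the paper singles out, and it is dispatched exactly as you suggest: since $d\bigl(K_{q-1}(z_1,\dots,z_q)\wedge dz_1\wedge\cdots\wedge dz_q\bigr)=0$ away from the origin in the $(z_1,\dots,z_q)$-plane, the fiber integral $\int_{M_w}K_{q-1}\wedge dz_1\wedge\cdots\wedge dz_q$ is a homology invariant, and every $M_w$ with $w\in\ring{\omega}$ is homologous to the standard $S^{2q-1}$, so the value is the Koppelman constant $\tfrac{(-1)^{q(q-1)/2}(q-1)!}{(2\pi i)^q}$---no delicate uniform estimates are needed.
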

\begin{proof}
The proof is analogous to that of Proposition\,\ref{prop61}.
Here we need to utilize, instead of Cauchy's formula, 
the identity
\begin{align}\label{eq:kop}
  \int_{\piup^{-1}(w)}K_{q-1}(z_1,\hdots,z_q)
\wedge{d}z_1\wedge\cdots\wedge{d}z_q=\tfrac{(-1)^{q(q-1)/2}(q-1)!}{(2\pi i)^q}
,\qquad \qquad\\ \notag
\forall w\in
\ring{\omega}\cap\supp\psi(w_{\ell}).
\end{align}
We have indeed
$d(K_{q-1}(z_1,\hdots,z_q)
\wedge{d}z_1\wedge\cdots\wedge{d}z_q)=0$ and then the value of the 
left hand side of \eqref{eq:kop} 
is a homology invariant. For all $w\in\ring{\omega}$, the fiber
$\piup^{-1}(w)$ is equivalent to the sphere 
$S^{2q-1}$. Then the value in  \eqref{eq:kop} can be computed by
integrating on $S^{2q-1}$ (see \cite{K67}).
\end{proof}
\subsection{Strictly pseudoconvex subdomains  of $CR$ manifolds}
Let $\Omega$ be an open set in $M$. 
Saying that 
$\partial\Omega$ is smooth at a point $p_0\in\partial\Omega$ means that
there is an open neighborhood $U$ of $p_0$ in $M$ and a smooth
real valued function $\phi\in\mathcal{C}^{\infty}(U,\mathbb{R})$
such that
\begin{equation}
  \label{eq:3.1}
  \Omega\cap{U}=\{p\in{U}\mid \phi(p)<0\},\quad d\phi(p_0)\neq{0}.
\end{equation}
\begin{defn}
We say that $\Omega$ is \emph{strictly pseudoconvex} at $p_0$ if
$\mathfrak{L}_{d\phi(p_0)}>0$ on $\mathbf{Z}_{p_0}M\cap\ker{d}\phi(p_0)$.
\end{defn}
Note that $\mathbf{Z}_{p_0}M\cap\ker{d}\phi(p_0)=\mathbf{Z}_{p_0}M$
when $d\phi(p_0)\in{H}^0_{p_0}M$.\par
We need to introduce a weaker notion of $CR$ function on $M$. 
\begin{defn}
If 
$U^{\text{open}}\subset{M}$, we say that $u\in\mathcal{C}^0(U)$ 
is $CR$ if
\begin{equation*}
  \int_U u\,d\eta=0,\quad\forall 
\eta\in(\Jtm)^{\nuup}(U)\cap\varOmega^{2n+k-1}_{M,0}(U),
\end{equation*}
where we indicate by $\varOmega^*_{M,0}(U)$ smooth exterior forms having 
compact support in~$U$.
\end{defn}
If $M$ is a $CR$ submanifold
of a complex manifold $\mathbb{X}$ and is minimal at $p_0$, then
all germs of continuous 
$CR$ functions extend holomorphically to wedges in $\mathbb{X}$ whose 
edges contain neighborhoods of $p_0$
(see \cite{Tu88, Tu90}). We set $\Ot_M^{\text{cont}}$ for the sheaf of
germs of continuous $CR$ functions on $M$. For every $U^{\text{open}}\subset{M}$,
the space
$\Ot_M^{\text{cont}}(U)$ is Fr\'echet for the topology of uniform
convergence on the compact subsets of~$U$.
\begin{defn}
Let $\Omega$ be an open subset of $M$,
$f\in{\Ot}_M(\Omega)$, and $p_0\in\partial\Omega$.
We say that $f$ 
weakly $CR$-extends beyond $p_0$ if there exists a connected
open neighborhood $U$ of $p_0$ in $M$ and 
$g\in{\Ot}_M^{\text{cont}}(U)$ such that $\{p\in\Omega\cap{U}
\mid g(p)=f(p)\}$ has a non empty interior.  
\end{defn}

\begin{prop}\label{prop:3.2}
  Let $M$ be a $CR$ submanifold of 
$CR$-dimension $n\geq{1}$ and arbitrary $CR$ codimension $k\geq{0}$ of
a Stein manifold $\mathbb{X}$, 
and $\Omega$ a smooth strictly pseudoconvex domain in $\mathbb{X}$,
with $\partial\Omega\cap{M}\neq\emptyset$.
Let $M_0$ be the set of points of $M\cap\partial\Omega$
at which $M$ is minimal.
Consider on ${\Ot}(\bar{\Omega})$
the natural Fr\'echet topology of uniform convergence with all derivatives
on the compact subsets of $\bar\Omega$. 
\par
Then the set
$\mathbb{E}$ of the elements 
$f\in{\Ot}(\bar{\Omega})$ such that 
$f|_{M\cap\bar{\Omega}}$ weakly $CR$-extends beyond some point $p\in{M}_0$ 
is of the first Baire category in 
${\Ot}(\bar{\Omega})$.
\end{prop}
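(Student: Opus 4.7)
The plan is to follow the Baire category scheme of Theorem~\ref{thm:b1}, with the role of the pointwise critical value of $du$ replaced by a forced holomorphic extension of $f$ across $\partial\Omega$. The key preliminary is the following claim: \emph{if $f\in\Ot(\bar\Omega)$ weakly $CR$-extends beyond some $p\in M_0$, then $f$ extends holomorphically to a full open neighborhood of $p$ in $\mathbb{X}$.} I would prove this in three stages. First, apply Lemma~\ref{lem:3.1} to upgrade ``$g=f$ on an open subset of $\Omega\cap U$'' to ``$g=f|_M$ on a connected neighborhood of $p$ in $M$'' via weak unique continuation at the minimal point $p$. Second, invoke Tumanov's wedge extension theorem \cite{Tu88,Tu90} to continue the continuous $CR$ function $g$ to a holomorphic $\tilde g$ on an open wedge $W\subset\mathbb{X}$ with edge a neighborhood of $p$ in $M$; analytic continuation forces $\tilde g=f$ on $W\cap\Omega$, so $f$ extends holomorphically to $\Omega\cup W$. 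Third, use strict pseudoconvexity of $\partial\Omega$ at $p$, together with the fact that $W$ meets $\mathbb{X}\setminus\bar\Omega$ near $p$, to fill in a full neighborhood of $p$ in $\mathbb{X}$ via a Kontinuit\"atssatz argument for pseudoconvex boundaries.

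Granted this claim, the Baire category argument proceeds in parallel with Theorem~\ref{thm:b1}. Pick a countable dense subset $\{p_\nu\}$ of $M_0$ and, for each $\nu$, a countable fundamental system $\{K_{\nu,\mu}\}_\mu$ of compact neighborhoods of $p_\nu$ in $\mathbb{X}$. For each $(\nu,\mu,h)\in\mathbb{N}^3$ set
\[
\mathbb{E}(\nu,\mu,h)=\bigl\{f\in\Ot(\bar\Omega)\;\big|\;\exists F\in\mathcal{C}^0(\bar\Omega\cup K_{\nu,\mu}),\; F\ \text{holomorphic on}\ \Omega\cup\mathring K_{\nu,\mu},\; F|_{\bar\Omega}=f,\; \|F\|_{K_{\nu,\mu}}\leq h\bigr\}.
\]
Each $\mathbb{E}(\nu,\mu,h)$ is closed in $\Ot(\bar\Omega)$ by a normal families argument applied to the uniformly bounded extensions $F$, and the claim gives $\mathbb{E}\subset\bigcup_{\nu,\mu,h}\mathbb{E}(\nu,\mu,h)$. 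To see that each set has empty interior, exploit strict pseudoconvexity of $\partial\Omega$ at $p_\nu$ and Steinness of $\mathbb{X}$ to construct an $f^\star\in\Ot(\bar\Omega)$ admitting no holomorphic continuation to any open neighborhood of $p_\nu$ in $\mathbb{X}$; for instance, a reciprocal of the Levi polynomial at $p_\nu$, regularized to a global element of $\Ot(\bar\Omega)$ by a Henkin-Ramirez construction or by H\"ormander's $L^2$-theory applied to a bumped strictly plurisubharmonic defining function. If $f_0$ were an interior point of $\mathbb{E}(\nu,\mu,h)$, then $f_0+\varepsilon f^\star\in\mathbb{E}(\nu,\mu,h)$ for all sufficiently small $\varepsilon>0$, and subtracting the extension of $f_0$ would yield a bounded holomorphic extension of $f^\star$ to $K_{\nu,\mu}$, contradicting the choice of $f^\star$. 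Hence $\mathbb{E}$ sits inside a countable union of closed nowhere-dense sets, which is exactly first Baire category.

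The main obstacle I anticipate is the third stage of the preliminary claim: verifying that the Tumanov wedge $W$ actually crosses $\partial\Omega$ near $p$, so that the pseudoconvex-boundary Kontinuit\"atssatz applies. The wedge profile is only guaranteed to be a nonempty open convex cone in the normal bundle of $M$, and its relative position with respect to the conormal of $\partial\Omega$ at $p$ must be analyzed; if $M$ is not generic in $\mathbb{X}$ one should first replace $\mathbb{X}$ locally by a minimal complex submanifold containing $M$, and should the wedge happen to point only into $\bar\Omega$ one must appeal to the stronger propagation of $CR$-extendability along $CR$-orbits (in the spirit of Trepreau-Tumanov) to secure extendability on the exterior side as well.
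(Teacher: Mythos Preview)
Your approach is genuinely different from the paper's, and the obstacle you flag at the end is real. The paper never attempts to promote the weak $CR$-extension of $f|_{M\cap\bar\Omega}$ to an actual holomorphic extension of $f$ across $\partial\Omega$ in $\mathbb{X}$; instead it runs a purely functional-analytic argument. For a fixed $p_0\in M_0$ and a fundamental system of neighborhoods $\{V_\nu\}$, the paper considers the closed subspace
\[
\mathbb{F}_\nu=\{(f,g)\in\Ot(\bar\Omega)\times\Ot_M^{\mathrm{cont}}(M\cap V_\nu)\mid g=f\ \text{on}\ M\cap\Omega\cap V_\nu\}
\]
and the projection $\pi_\nu$ onto the first factor. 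If your set $\mathbb{E}_{p_0}$ were of second category, some $\pi_{\nu_0}(\mathbb{F}_{\nu_0})$ would be of second category, hence all of $\Ot(\bar\Omega)$; by Banach--Schauder $\pi_{\nu_0}$ is open, and since unique continuation (Lemma~\ref{lem:3.1}) makes the fibers singletons, one gets a continuous a~priori estimate $|g(p)|\le\|f\|_{\ell,K}$ on $M\cap V$. Restricting to entire $f\in\Ot(\mathbb{X})$, applying Cauchy's inequalities, and using the power trick $f\mapsto f^h$ yields $|f(p)|\le\sup_K|f|$ for every $p\in M\cap V$ and every entire $f$, forcing $M\cap V$ into the $\Ot(\mathbb{X})$-hull of a compact $K\subset\bar\Omega$; this contradicts the holomorphic convexity of $\bar\Omega$ that strict pseudoconvexity in a Stein manifold provides. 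A dense sequence in $M_0$ then finishes as you do.

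The upshot is that the paper's route avoids precisely the wedge-direction problem you isolate: it never needs to know on which side of $\partial\Omega$ the Tumanov wedge falls, nor does it need to construct an explicit non-extendable $f^\star$. Your scheme would likely go through when $M$ is transverse to $\partial\Omega$ at $p_\nu$ (since then even the edge of the wedge crosses $\partial\Omega$), but in the tangential case the appeal to Trepreau--Tumanov propagation is not a complete argument as stated, and closing it would take substantially more work than the open-mapping-plus-power-trick proof.
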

\begin{proof} 
Fix a point $p_0\in{M}_0$ and a 
countable fundamental system of open neighborhoods $\{V_\nu\}_{\nu\in\mathbb{N}}$
of $p_0$ in $\mathbb{X}$. 
We can assume that for every $\nu\in\mathbb{N}$ 
the intersection $M\cap{V}_{\nu}$ is connected and contained in the set
of minimal points of $M$. In particular, two $CR$ functions on $M\cap{V}_{\nu}$
which agree on some non empty open subset of $M\cap{V}_{\nu}$, are equal on
all $M\cap{V}_{\nu}$.
For each $\nu$ the set 
\begin{equation*}
 \mathbb{F}_{\nu}=\{(f,g)\in{\Ot}(\bar{\Omega})
\times{\Ot}_M^{\text{cont}}(M\cap
 V_{\nu})\mid g=f\;\text{on}\; M\cap\Omega\cap{V}_{\nu}\}
\end{equation*}
is a Fr\'echet space, being a closed subspace of 
${\Ot}(\bar{\Omega})\times
{\Ot}_M^{\text{cont}}(M\cap V_{\nu})$.\par
Let $\pi_{\nu}:\mathbb{F}_{\nu}\to{\Ot}(\bar\Omega)$ be the
projection into the first component.\par
Assume by contradiction that the set of $f\in{\Ot}(\bar\Omega)$ 
such that $f|_{M\cap\Omega}$ can
be weakly $CR$-continued beyond $p_0$ 
is of the second Baire category.  Then 
$ \bigcup_{\nu\in\mathbb{N}}\pi_{\nu}(\mathbb{F}_{\nu})$
is of the second Baire category, and hence 
at least one $\pi_{\nu_0}(\mathbb{F}_{\nu_0})$ is of the second
Baire category in ${\Ot}(\bar\Omega)$.
Hence
$\pi_{\nu_0}(\mathbb{F}_{\nu_0})={\Ot}(\bar\Omega)$ and,
by Banach-Shauder's theorem, the map 
$\pi_{\nu_0}:\mathbf{F}_{\nu_0}\to{\Ot}(\bar\Omega)$ is open
(see e.g \cite[\S{2.1}]{SW99}). 
By the assumption of minimality and 
the fact that
Lemma \ref{lem:3.1} also applies to continuous $CR$ functions, for each
$f\in{\Ot}(\bar\Omega)$ there is at most one 
$g\in{\Ot}_M^{\text{cont}}(M\cap{V_{\nu}})$ such that
$(f,g)\in\mathbb{F}_{\nu}$. \par
Thus, we conclude that 
there is a relatively compact open neighborhood $V$ of
$p_0$ in $\mathbb{X}$ such that  
\begin{gather}
 \forall f\in{\Ot}(\bar\Omega)\;\;\exists ! g\in{\Ot}_M(
 M\cap{V}) \quad\text{with $f=g$ on $M\cap\Omega\cap{V}$ and}\\
 \label{eq:3.3}
 |g(p)|\leq \|f\|_{\ell,K}, \quad \forall p\in{M}\cap{V},
\end{gather}
where $K$ is a compact subset of $\bar\Omega$ and $\|f\|_{\ell,K}$ a 
seminorm involving the derivatives of $f$ up to order $\ell$ on $K$.
Let $K_{\nu}$ be a sequence of compact subsets of $\mathbb{X}$
such that $K_{\nu+1}\Subset\text{int}({K}_{\nu})$, 
for all $\nu\in\mathbb{N}$, and
$\bigcap_{\nu}K_{\nu}=K$. By Cauchy's inequalities, there are constants
$C_{\nu}>0$ such that
\begin{equation*}
 \|f\|_{\ell,K}\leq C_{\nu}{\sup}_{K_{\nu}}|f|,\quad\forall f\in{\Ot}
 (\mathbb{X}).
\end{equation*}
Using \eqref{eq:3.3} we obtain 
\begin{equation*}
 |f(p)|\leq C_{\nu}{\sup}_{K_{\nu}}|f|,\quad \forall f\in{\Ot}(\mathbb{X}),\;\;
 \forall p\in{M}\cap{V}.
\end{equation*}
By applying this inequality to the positive integral powers of 
the entire functions on $\mathbb{X}$, we obtain that
\begin{gather*}
 |f(p)|\leq 
 C_{\nu}^{{1/h}}\;
 {\sup}_{K_{\nu}}|f|,\quad \forall f\in{\Ot}(\mathbb{X}),\;\;\forall
 p\in{M}\cap{V},\;\;
 \forall 0<h\in\mathbb{N}\\
  \Longrightarrow 
  |f(p)|\leq 
  {\sup}_{K_{\nu}}|f|,\quad \forall f\in{\Ot}(\mathbb{X}),\;\;
  \forall p\in{M}\cap{V}.
\end{gather*}
Since ${\bigcap}_{\nu}K_{\nu}=K$, we obtain 
\begin{equation*}
 |f(p)|\leq{\sup}_{K}|f|,\quad\forall f\in{\Ot}(\mathbb{X}).
\end{equation*}
But this gives a contradiction, because the fact that
$\Omega$ is strictly pseudoconvex implies that $\bar{\Omega}$
is holomorphically convex in $\mathbb{X}$. \par
We showed that,
for every point $p\in{M}_0$,  
the set $\mathbb{E}_{p}$ of
$f\in{\Ot}(\bar\Omega)$ 
for which $f|_{M\cap\Omega}$ weakly $CR$-extends beyond $p$
is of the first Baire category
in ${\Ot}(\bar\Omega)$. The set $M_0$ is separable. Then
we take a dense sequence $\{p_{\nu}\}$ in $M_0$ and we observe
that $\mathbb{E}=\bigcup_{\nu\in\mathbb{N}}\mathbb{E}_{p_{\nu}}$
is of the first Baire category,
being a countable union of sets of the first Baire category.
\end{proof}

\subsection{The Cauchy problem for $\bar{\partial}_M$}
Let $F^{\text{closed}},U^{\text{open}}\subset{M}$. Set, for all $a=0,\hdots,\nuup$,
$q=0,\hdots,n$, 
\begin{displaymath}
\mathscr{Q}^{a,q}(U;F)=\{f\in\mathscr{Q}^{a,q}(U)\mid f=0^{\infty}\;\text{on}\;
F\cap{U}\}. 
\end{displaymath}
Clearly $(\mathscr{Q}^{a,*}(U;F),\bar{\partial}_M)$ is a subcomplex
of $(\mathscr{Q}^{a,*}(U),\bar{\partial}_M)$
and we can consider the cohomology groups
\begin{equation*}
  \mathrm{H}^{a,q}(U,F)=\ker(\bar{\partial}_M:\mathscr{Q}^{a,q}(U;F)\to
\mathscr{Q}^{a,q+1}(U;F))/\bar\partial_M\mathscr{Q}^{a,q-1}(U;F)
\end{equation*}
and also their germs
\begin{equation*}
   \mathrm{H}^{a,q}(p_0,F)={\varinjlim}_{\; p_0\in{U}^{\text{open}}}\mathrm{H}^{a,q}(U,F).
\end{equation*}

We obtain from Proposition \ref{prop:3.2}
\begin{prop}
  \label{prop:3.3}
Let $M$ be a $CR$ submanifold of positive $CR$ dimension $n$ of a
Stein manifold $\mathbb{X}$. 
Let $M'$ be the set of points of $M$ where $M$ is minimal.
Let $\Omega$ be a strictly pseudoconvex 
open subset of $\mathbb{X}$, with 
$M_0=M'\cap\partial\Omega\neq\emptyset$.
Then the set of $\alpha\in
\mathrm{H}^{0,1}(M,\bar{\Omega}\cap{M})$ 
that restrict to the zero class of
$\mathrm{H}^{0,1}(p,\bar{\Omega}\cap{M})$ for some
$p\in{M}_0$
is infinite dimensional.\par
More precisely, if we consider the Fr{\'e}chet space
\begin{equation*}
  \mathscr{F}=\{f\in\varOmega^1({M})\mid f|_{\bar{\Omega}\cap{M}}=0^{\infty},
\;\; df\in\Jtm\},
\end{equation*}
then the set of $f\in\mathscr{F}$ for which we can find $p\in{M}_0$ and
$u_{(p)}\in{\mathcal{C}}^0_{(p)}$ with $u_{(p)}$ vanishing on
$\bar{\Omega}\cap{M}$, and 
$\bar{\partial}_Mu_{(p)}=f_{(p)}$, span a linear subspace of infinite
dimensional codimension in 
$\mathscr{F}$. Here we wrote $f_{(p)}$ for the germ of $f$ at $p$, and 
the equality $\bar{\partial}_Mu_{(p)}=f_{(p)}$
has to be interpreted in the weak sense: it means that there is 
an open neighborhood $U_p$ of $p$ and a continuous function 
$u\in\mathcal{C}(U_p)$, vanishing on $\bar{\Omega}\cap{U}_p$, such that
\begin{equation*}
  \int_M u\, d\etaup = -\int_M f\wedge \etaup,\quad\forall
\etaup\in (\Jtm)^{\nuup}(U_p)\cap\varOmega^{2n+k-1}_{0}(U_p). 
\end{equation*}
\end{prop}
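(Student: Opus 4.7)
The plan is to deduce Proposition \ref{prop:3.3} from Proposition \ref{prop:3.2} via a linear reduction. I would construct a continuous linear operator $T:{\Ot}(\bar\Omega)\to\mathscr{F}$, well defined modulo the solvable set, with the property that $T(g)\in V_p$ forces $g|_M$ to weakly $CR$-extend beyond $p$. The Baire-category conclusion of Proposition \ref{prop:3.2} then restricts the preimage of the solvable set, and a Banach--Schauder-type argument is used to promote this into infinite codimension of the span of $V$.

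For the construction of $T$, I would use Whitney extension. Since $g\in\mathcal{C}^{\infty}(\bar\Omega)\cap{\Ot}(\Omega)$, the jets of $\bar\partial g$ vanish identically on $\bar\Omega$, and any continuous linear Whitney extension operator $g\mapsto\tilde{g}\in\mathcal{C}^{\infty}(\mathbb{X})$ produces a smooth function with $\bar\partial\tilde{g}=0^{\infty}$ on $\bar\Omega$. Setting $T(g)=(\bar\partial\tilde{g})|_M$ gives an element of $\mathscr{F}$, being $\bar\partial_M$-closed and vanishing to infinite order on $\bar\Omega\cap M$. A different extension modifies $T(g)$ by the $\bar\partial_M$ of a smooth function vanishing to infinite order on $\bar\Omega\cap M$, hence by a globally solvable form, which lies in $V$; thus the class of $T(g)$ in $\mathscr{F}/\mathrm{span}(V)$ depends only on $g$.

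The crucial local property is the following: if $T(g)\in V_p$, and $u\in\mathcal{C}^0(U_p)$ realizes the solvability with $u=0$ on $\bar\Omega\cap U_p$ and $\bar\partial_M u=T(g)$ on a neighborhood $U_p$ of $p$, then $v:=u-\tilde{g}|_M$ satisfies $\bar\partial_M v=0$ on $U_p$ while coinciding with $-g|_M$ on $\bar\Omega\cap U_p\cap M$; thus $-v$ provides the required weak $CR$-extension of $g|_M$ beyond $p$. Taking a countable dense $\{p_\nu\}\subset M_0$ and using that $V_p\subset V_{p_\nu}$ whenever $p_\nu$ lies in the neighborhood where the solution at $p$ exists, one obtains $V=\bigcup_\nu V_{p_\nu}$ and $\mathrm{span}(V)=\sum_\nu V_{p_\nu}$. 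Combined with Proposition \ref{prop:3.2}, this forces $T^{-1}(V)\subset\mathbb{E}$ to be of first Baire category in ${\Ot}(\bar\Omega)$.

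The main obstacle is promoting this Baire-category statement into infinite-dimensional codimensionality of $\mathrm{span}(V)$ in $\mathscr{F}$. I would argue by contradiction: assuming $\mathrm{span}(V)$ has finite codimension $n$, the induced continuous linear map $\bar T:{\Ot}(\bar\Omega)\to\mathscr{F}/\mathrm{span}(V)$ has finite-dimensional target, and an open-mapping argument parallel to that in the proof of Proposition \ref{prop:3.2}, combined with the weak unique continuation principle of Lemma \ref{lem:3.1}, should yield a uniform estimate of the form $|g(p)|\leq C\,\|g\|_{\ell,K}$ for $g\in{\Ot}(\mathbb{X})$, with $K\Subset\bar\Omega$ and $p\in M_0$. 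Applied to integer powers of entire functions and combined with the strict pseudoconvexity (hence holomorphic convexity) of $\bar\Omega$ in the Stein manifold $\mathbb{X}$, this would collapse to $|g(p)|\leq\sup_K|g|$, contradicting Steinness. Making precise how the finite-codimensional information converts into a genuine pointwise bound, rather than a bound modulo a finite-dimensional subspace, is where I expect the argument to require the most care.
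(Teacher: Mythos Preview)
Your approach is essentially the paper's own. The paper also takes $g\in\Ot(\bar\Omega)$, applies Whitney extension to get $\tilde g\in\mathcal{C}^\infty(\mathbb{X})$, sets $f=T(g)=\bar\partial\tilde g|_M\in\mathscr{F}$, and observes that if $\bar\partial_M u_{(p)}=f_{(p)}$ with $u$ vanishing on $\bar\Omega\cap M$, then $\tilde g|_M-u$ (the paper writes $w_{(p)}+u_{(p)}$, a harmless sign slip) is continuous, $CR$ near $p$, and equals $g$ on $M\cap\Omega$, hence furnishes the weak $CR$-extension. The paper then simply says ``the thesis follows by Proposition~\ref{prop:3.2}'' and stops. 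So you have reconstructed the paper's argument exactly, and in fact gone beyond it by isolating the passage from the Baire-category conclusion $T^{-1}(V)\subset\mathbb{E}$ to the asserted infinite codimension of $\mathrm{span}(V)$, which the paper does not address.

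On that last step your caution is warranted, and the difficulty is genuine. Even granting that $\mathrm{span}(V)$ has finite codimension, what you get is that $T^{-1}(\mathrm{span}(V))$ has finite codimension in $\Ot(\bar\Omega)$; but your key implication links $g$ to $\mathbb{E}$ only when $T(g)\in V$, not when $T(g)\in\mathrm{span}(V)$. A decomposition $T(g)=\sum_j f_j$ with each $f_j$ solvable at a different $p_j$ does not obviously yield a $CR$-extension of $g|_{M\cap\Omega}$ beyond any single point, so the contradiction with Proposition~\ref{prop:3.2} does not close. The open-mapping and power-trick argument you sketch would need an additional idea to convert the finite-codimension hypothesis into an honest pointwise bound rather than a bound modulo a finite-dimensional space; as written it does not quite do so. In short: your core argument matches the paper's and is correct as far as it goes; the ``infinite codimension'' clause is not substantiated by the paper's proof either, and your flagging of this point is apt.
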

\begin{proof}
  Let $g\in{\Ot}(\bar\Omega)$. By Whitney's extension theorem,
there is a smooth complex valued function 
$\tilde{g}$ on $\mathbb{X}$ with
$\tilde{g}=g$ on $\bar{\Omega}$. Then $f=\bar\partial\tilde{g}|_M$ 
is an element of $\mathscr{F}$. Let $w=\tilde{g}|_{M}$.
Then $w$ is a smooth function on $M$ and its restriction to $M\cap\Omega$
is $CR$.
If $p\in{M}_0$ and 
$u_{(p)}\in{{}}{\mathcal{C}}^0_{(p)}$ solves $\bar{\partial}_Mu_{(p)}=
f_{(p)}$,  then $w_{(p)}+u_{(p)}$ yields a 
weak $CR$ extension
of $w|_{M\cap\Omega}$ beyond $p$. Therefore 
the thesis follows by Proposition\,\ref{prop:3.2}.
\end{proof}
Let $N$ be a smooth submanifold of $M$. 
Its conormal bundle in $M$ at $p\in{N}$ is
\begin{equation*}
  T^*_{N,p}M=\{\xi\in{T}^*_pM\mid \xi(v)=0,\;\forall v\in{T}_pN\}.
\end{equation*}
\begin{dfn} We say that $N$ is \emph{characteristic} at $p\in{N}$ if
$T^*_{N,p}M\cap{H}^0_pM\neq\{0\}$.
\end{dfn}
Equivalently, $N$ is non characteristic at $p\in{N}$ if it contains a germ
$(N',p)$ of $CR$ submanifold of type $(0,\nuup)$ of $M$.
We have the following
\begin{lem} \label{lem:3.4}
Let $M$ be a $CR$ manifold of positive $CR$ dimension $n$, and
$N$ a smooth submanifold of $M$. Let $U$ be an open neighborhood
of a non characteristic point $p_0$ of ${N}$.
\par
If $f\in\mathscr{Q}^{0,1}(U,N)$ and $u\in\mathcal{C}^{\infty}(U)$
solves
\begin{equation}
  \label{eq:3.7}\begin{cases}
  \bar{\partial}_Mu=f &\text{on}\;\;U,\\
  u=0 &\text{on}\;\; N
\end{cases}
\end{equation}
then $u$ vanishes to infinite order at $p_0$.
\end{lem}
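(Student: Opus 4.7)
The plan is to reduce to the case where $N$ is a totally real submanifold of maximal dimension through $p_0$, then choose adapted coordinates, and finally run a formal Cauchy-Kowalevski-type induction on the order of the transverse derivatives. By the remark preceding the statement, the non-characteristic hypothesis is equivalent to $N$ containing a germ $(N',p_0)$ of a $CR$ submanifold of type $(0,\nuup)$, with $\nuup=n+k$. Since the infinite-order vanishing of $u$ and of a smooth representative of $f$ on $N$ passes to the smaller $N'$, I may replace $N$ by $N'$ and assume that $N$ itself is a totally real submanifold of real dimension $\nuup$ through $p_0$. The fact that $\mathbf{Z}_{p_0}N=0$, together with the dimension count $\nuup+n=m$, gives the splitting
\[
\mathbb{C}T_{p_0}M=\mathbb{C}T_{p_0}N\oplus\mathbf{Z}_{p_0}M,
\]
which persists in a neighborhood of $p_0$. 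I would introduce local real coordinates $(x_1,\ldots,x_\nuup,y_1,\ldots,y_n)$ centered at $p_0$ with $N\cap U=\{y=0\}$, and use this splitting to pick a smooth frame $L_1,\ldots,L_n$ of $\Zt$ on $U$ of the form $L_j=\partial/\partial y_j+\sum_{i=1}^{\nuup}a_{ji}(x,y)\,\partial/\partial x_i$ with $a_{ji}\in\mathcal{C}^\infty(U;\mathbb{C})$.

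Pairing the equation $\bar\partial_M u=f$ with this frame turns it into scalar equations $L_j u=g_j$, $j=1,\ldots,n$, where each $g_j\in\mathcal{C}^\infty(U)$ vanishes to infinite order on $N\cap U$ because $f\in\mathscr{Q}^{0,1}(U,N)$. Vanishing to infinite order at $p_0$ amounts to $\partial_x^\alpha\partial_y^\beta u(p_0)=0$ for all multi-indices $\alpha,\beta$, and since $u\equiv 0$ on $N$ already forces every pure $x$-derivative of $u$ to vanish identically on $N$, it suffices to prove by induction on $r=|\beta|$ that $\partial_y^\beta u|_{y=0}\equiv 0$ for every $\beta\in\mathbb{N}^n$. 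For the inductive step, write $\beta=\gamma+e_j$ with $|\gamma|=r$ and solve the $j$-th equation for $\partial/\partial y_j$:
\[
\frac{\partial u}{\partial y_j}=g_j-\sum_{i=1}^{\nuup}a_{ji}(x,y)\frac{\partial u}{\partial x_i}.
\]
Apply $\partial_y^\gamma$ to the right-hand side and evaluate at $y=0$: the $\partial_y^\gamma g_j(x,0)$ term vanishes by the infinite-order vanishing of $g_j$, and the Leibniz expansion of $\partial_y^\gamma(a_{ji}\,\partial_{x_i}u)$ yields a sum of terms each containing a factor of the form $\partial_{x_i}\bigl(\partial_y^{\delta} u|_{y=0}\bigr)(x)$ with $|\delta|\leq r$, which vanishes by the inductive hypothesis.

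The main obstacle I anticipate is not the induction itself, which is routine, but the cleanness of the setup: one must verify that the non-characteristic condition really provides a smooth totally real germ $(N',p_0)\subset(N,p_0)$ of the full dimension $\nuup$, and that the passage from the intrinsic class $f\in\mathscr{Q}^{0,1}(U,N)$ to smooth scalar coefficients $g_j=L_j u$ preserves the infinite-order vanishing on $N$. Once these two setup points are in place, the induction closes to give the full Taylor series of $u$ at $p_0$ equal to zero, which is the desired conclusion.
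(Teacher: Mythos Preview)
Your argument is correct and is precisely what the paper has in mind: the paper's entire proof is the single sentence ``The statement follows by the uniqueness in the formal non characteristic Cauchy problem,'' and your reduction to a totally real germ $N'$ of type $(0,\nuup)$, choice of an adapted frame $L_j=\partial_{y_j}+\sum_i a_{ji}\partial_{x_i}$, and induction on $|\beta|$ for $\partial_y^\beta u|_{y=0}$ is exactly the standard way to unpack that sentence. The two setup points you flag are harmless: the existence of $(N',p_0)$ is the equivalence stated just before the lemma, and the infinite-order vanishing of $g_j=f(L_j)$ on $N$ follows because $f\in\mathscr{Q}^{0,1}(U,N)$ is a section of a smooth quotient bundle vanishing to infinite order there, so its pairing with any smooth frame of $\Zt$ inherits that vanishing.
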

\begin{proof}
The statement follows by the uniqueness in the formal
non characteristic Cauchy problem. 
\end{proof}
\subsection{The canonical bundle} 
The sheaf $\mathscr{Q}^{\nuup,0}=
(\Jtm)^\nuup\cap{\varOmega}^{\nuup}_M$
is the sheaf of germs of smooth sections of a line bundle $KM$ on $M$,
that is called the \emph{canonical bundle} of $M$.
\par
Let $U^{\text{open}}\subset{M}$, and assume that
$\thetaup_1,\hdots,\thetaup_{\nuup}\in\Jtm(U)$ give a basis of
$\mathbf{Z}^0_pM$ at all points $p\in{U}$. Since
$d\thetaup_j\in\Jtm(U)$, we obtain
\begin{equation*}
  d(\thetaup_1\wedge\cdots\wedge\thetaup_{\nuup})=\alphaup\wedge
\thetaup_1\wedge\cdots\wedge\thetaup_{\nuup},\;\;\text{with}\;\;
\alphaup\in\varOmega^1(U).
\end{equation*}
The form $\alphaup$ is uniquely determined modulo $\Jtm$, and thus
defines a unique element $[\alphaup]\in\mathscr{Q}^{0,1}(U)$. 
By differentiating we get
\begin{align*}
  0&=d^2(\thetaup_1\wedge\cdots\wedge\thetaup_{\nuup})=
(d\alpha)\wedge \thetaup_1\wedge\cdots\wedge\thetaup_{\nuup}-
\alpha\wedge{d}(\thetaup_1\wedge\cdots\wedge\thetaup_{\nuup})\\
&=(d\alpha)\wedge \thetaup_1\wedge\cdots\wedge\thetaup_{\nuup}
-\alpha\wedge\alphaup\wedge
\thetaup_1\wedge\cdots\wedge\thetaup_{\nuup}\\
&=(d\alpha)\wedge \thetaup_1\wedge\cdots\wedge\thetaup_{\nuup}
\;\Longleftrightarrow \bar{\partial}_M[\alpha]=0.
\end{align*}
If $\omegaup$ is another non zero 
section of $KM$, defined in a neighborhood
of a point $p_0\in{U}$, we have 
$\omegaup=e^u\thetaup_1\wedge\cdots\wedge\thetaup_{\nuup}$
on a neighborhood $U_{p_0}$ of $p_0$ in $U$ and hence
\begin{equation*}
  d\omegaup = e^u(du+\alphaup)\wedge\thetaup_1\wedge\cdots\wedge\thetaup_{\nuup}
\;\;\text{on $U_{p_0}$.}
\end{equation*}
Thus we obtain
\begin{lem}
There is a section $\psiup\in\Gamma(M,{{}}{H}^{0,1})$ of the sheaf
of germs of cohomology classes of bidegree $(0,1)$ such that
\begin{equation}
  \label{eq:35} \begin{cases} 
\forall p\in{M},\;\;\forall \omegaup\in{{}}{\Gamma}_{(p)}(KM),\;\;
\text{with $\omegaup(p)\neq{0}$},\;\;
\exists \alpha\in{{}}{\varOmega}^1_{(p)}
\;\;\text{such that}\\
d\omegaup(p)=\alpha(p)\wedge\omegaup(p).\;\;
 \bar\partial_M[\alpha]=0,\;\; \llbracket\alpha
\rrbracket\in\psiup(p).
\end{cases}
\end{equation}
Here $\llbracket\alpha
\rrbracket$ is the element of $\mathrm{H}^{0,1}(p)$ defined by
$[\alpha]\in{{}}{\mathcal{Z}}^{0,1}_{(p)}$.
\end{lem}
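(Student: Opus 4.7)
The plan is to build $\psiup$ locally, using the local nowhere vanishing sections of the canonical line bundle $KM$, and to verify that the cohomology class so obtained is independent of the choice of local generator. The computation preceding the lemma has essentially produced the local datum: if $\thetaup_1,\hdots,\thetaup_{\nuup}\in\Jtm(U)$ is a local basis of $\mathbf{Z}^0M$ on an open set $U$, then from $d\thetaup_j\in\Jtm$ one gets
\begin{equation*}
 d(\thetaup_1\wedge\cdots\wedge\thetaup_{\nuup})=\alphaup\wedge\thetaup_1\wedge\cdots\wedge\thetaup_{\nuup},
\end{equation*}
with $\alphaup\in\varOmega^1(U)$ determined modulo $\Jtm^1(U)$, so $[\alphaup]\in\mathscr{Q}^{0,1}(U)$ is well defined; the identity $d^2=0$ then gives $\bar\partial_M[\alphaup]=0$. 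So I would first observe that such an $[\alphaup]\in\mathcal{Z}^{0,1}(U)$ is attached to every local trivialization of $KM$.

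Next I would check that the cohomology class $\llbracket\alphaup\rrbracket\in\mathrm{H}^{0,1}(U)$ does not depend on the chosen trivialization. If $\omegaup$ is another nowhere vanishing section of $KM$ on an open neighborhood $U_{p_0}\subset U$ of $p_0$, then $\omegaup=e^u\,\thetaup_1\wedge\cdots\wedge\thetaup_{\nuup}$ for some $u\in\mathcal{C}^{\infty}(U_{p_0})$, and the computation in the text yields $d\omegaup=e^u(du+\alphaup)\wedge\thetaup_1\wedge\cdots\wedge\thetaup_{\nuup}$. Hence the one-form $\alphaup'$ associated to $\omegaup$ satisfies $\alphaup'\equiv\alphaup+du$ modulo $\Jtm^1$, i.e.\ $[\alphaup']-[\alphaup]=[du]=\bar\partial_M[u]$ in $\mathscr{Q}^{0,1}(U_{p_0})$. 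Thus $\llbracket\alphaup'\rrbracket=\llbracket\alphaup\rrbracket$ in $\mathrm{H}^{0,1}(U_{p_0})$.

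Finally, I would globalize. Cover $M$ by open sets $\{U_i\}$ each admitting a nowhere vanishing section $\omegaup_i$ of $KM$, and let $[\alphaup_i]\in\mathcal{Z}^{0,1}(U_i)$ be the cocycles produced above. The previous step, applied on double intersections $U_i\cap U_j$ to the pair of generators $\omegaup_i,\omegaup_j$, shows that $[\alphaup_i]$ and $[\alphaup_j]$ define the same class in $\mathrm{H}^{0,1}(U_i\cap U_j)$. Hence the locally defined classes $\llbracket\alphaup_i\rrbracket$ patch to a global section $\psiup\in\Gamma(M,\mathrm{H}^{0,1})$. By construction $\psiup$ satisfies \eqref{eq:35}: for any $p$ and any germ $\omegaup\in\Gamma_{(p)}(KM)$ with $\omegaup(p)\neq 0$, choose a representative on a small $U_p$ and take $\alphaup\in\varOmega^1(U_p)$ with $d\omegaup=\alphaup\wedge\omegaup$; the previous arguments give $\bar\partial_M[\alphaup]=0$ and $\llbracket\alphaup\rrbracket=\psiup(p)$.

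The only real subtlety is the gluing step, which is why I would single out the invariance under change of trivialization $\omegaup\mapsto e^u\omegaup$ as the key computation: it identifies the ambiguity $du$ with $\bar\partial_M u$ in $\mathscr{Q}^{0,1}$, which is precisely what forces the class in $\mathrm{H}^{0,1}$ to be canonically attached to the point $p$ rather than to the generator. Everything else is bookkeeping on the quotient complex $\mathscr{Q}^{a,*}_M$.
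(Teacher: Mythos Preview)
Your proposal is correct and follows exactly the paper's approach: the computation preceding the lemma (which serves as its proof) produces $[\alphaup]\in\mathcal{Z}^{0,1}$ from a local trivialization of $KM$, shows $\bar\partial_M[\alphaup]=0$ via $d^2=0$, and then the change-of-trivialization formula $d\omegaup=e^u(du+\alphaup)\wedge\thetaup_1\wedge\cdots\wedge\thetaup_{\nuup}$ gives $[\alphaup']=[\alphaup]+\bar\partial_Mu$, so the class in $\mathrm{H}^{0,1}$ is independent of the generator and glues to a global section $\psiup$. You have simply made the gluing step explicit, which the paper leaves implicit in the phrase ``Thus we obtain.''
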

\begin{lem}
If $M$ is locally $CR$-embeddable at $p$, then $\psiup(p)=0$. \par  
If $\Zt$ is completely integrable at $p$, then, for 
$\alpha\in{{}}{\varOmega}^1_{(p)}$ satisfying \eqref{eq:35}
there is $u\in{{}}{\mathcal{C}}^0_{(p)}$ such that
$\bar\partial_Mu=[\alpha]$. 
\end{lem}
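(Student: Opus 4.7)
My plan is to treat the two assertions in turn, in both cases using a preferred frame coming from the $CR$-coordinates to trivialize $KM$. For the first, local $CR$-embed\-da\-bi\-li\-ty gives a smooth $CR$-chart $(U;z_1,\ldots,z_{\nuup})$ around $p$, and I will take $\omegaup_0:=dz_1\wedge\cdots\wedge dz_{\nuup}$, which is a nowhere vanishing smooth section of $KM$ on $U$ with $d\omegaup_0=0$. Any other germ $\omegaup\in\Gamma_{(p)}(KM)$ has the form $\omegaup=e^v\omegaup_0$ with $v$ smooth, so $d\omegaup=dv\wedge\omegaup$ and one may take $\alpha=dv$ in \eqref{eq:35}. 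Then $[\alpha]=\bar\partial_Mv$ is a coboundary in $\mathcal{Z}^{0,1}_{(p)}$, and the class $\llbracket\alpha\rrbracket\in\mathrm{H}^{0,1}(p)$ defining $\psiup(p)$ vanishes.

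For the second assertion, complete integrability at $p$ supplies $\mathcal{C}^1$ $CR$-functions $z_1,\ldots,z_{\nuup}$ on a neighborhood $U$ of $p$ whose differentials at $p$ span $\mathbf{Z}^0_pM$; after shrinking $U$, the form $\omegaup_0:=dz_1\wedge\cdots\wedge dz_{\nuup}$ is a continuous, nowhere vanishing section of $KM$. The first step is to check that $d\omegaup_0=0$ in the distributional sense, by mollifying each $z_i$: the identity $d(dz_i^{\epsilon})=0$ holds pointwise, $dz_i^{\epsilon}\to dz_i$ uniformly on compact subsets, and this passes to the limit against smooth compactly supported test forms. Given a smooth, nowhere vanishing $\omegaup\in\Gamma_{(p)}(KM)$ together with $\alpha$ as in \eqref{eq:35}, I write $\omegaup=f\omegaup_0$ with $f$ continuous and nonzero near $p$, and set $u:=\log f$ using a local continuous branch of the logarithm.

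From $d\omegaup=\alpha\wedge\omegaup$ combined with $d\omegaup_0=0$ (distributionally), a direct computation gives
\begin{equation*}
(du-\alpha)\wedge\omegaup_0=0 \quad\text{as a distributional $(\nuup+1)$-form on $U$.}
\end{equation*}
Since $\omegaup_0$ locally generates $(\Jtm)^{\nuup}$ as a $\mathcal{C}^{\infty}$-module, every test form $\eta\in(\Jtm)^{\nuup}\cap\varOmega^{2n+k-1}_{0}(U)$ decomposes as $\eta=\omegaup_0\wedge\tau$ for a compactly supported smooth $(n-1)$-form $\tau$. Pairing the distributional identity above with $\tau$ and integrating by parts yields $\int_M u\,d\eta=-\int_M\alpha\wedge\eta$, which is exactly the weak definition of $\bar\partial_Mu=[\alpha]$ used earlier in this section.

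The main subtlety I foresee is the distributional calculus at $\mathcal{C}^1$-regu\-la\-ri\-ty: justifying $d\omegaup_0=0$ weakly, and extending the chain-rule identity $d(e^u\omegaup_0)=e^u\,du\wedge\omegaup_0$ to continuous $u$. Both points are controlled by the same mollification argument, using that continuous forms pair well with smooth compactly supported test forms.
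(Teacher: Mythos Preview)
The paper states this lemma without proof, so there is nothing to compare your argument against; below I evaluate your proposal directly.

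Your argument for the first assertion is correct. Once you have a smooth $CR$-chart $(U;z_1,\dots,z_{\nuup})$, the section $\omegaup_0=dz_1\wedge\cdots\wedge dz_{\nuup}$ is closed, so you may take $\alpha=0$ in \eqref{eq:35}; since the preceding lemma already tells you that the class $\llbracket\alpha\rrbracket$ is independent of the choice of trivializing section, this forces $\psiup(p)=0$.

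For the second assertion your strategy is the natural one, but the execution has two genuine gaps. First, the sentence ``every test form $\eta\in(\Jtm)^{\nuup}\cap\varOmega^{2n+k-1}_{0}(U)$ decomposes as $\eta=\omegaup_0\wedge\tau$ for a compactly supported \emph{smooth} $(n-1)$-form $\tau$'' is false: since $\omegaup_0$ is only continuous, the coefficient $\tau$ need only be continuous. The correct decomposition uses the given \emph{smooth} section $\omegaup$, writing $\eta=\omegaup\wedge\sigma$ with $\sigma$ smooth. Second, and more seriously, the distributional identity $(du-\alpha)\wedge\omegaup_0=0$ is not well-posed as written: $u=\log f$ is only continuous, so $du$ is a $1$-current, and wedging a $1$-current with the merely continuous $\nu$-form $\omegaup_0$ is not a legitimate operation in general. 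Your mollification sketch does not immediately cure this, because mollifying the $z_i$ produces closed $\nu$-forms $\omegaup_0^{\epsilon}$ that are no longer sections of $KM$; when you project back onto $KM$ you lose closedness, and the error term involves an uncontrolled derivative $du^{\epsilon}$.

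A cleaner route is to work entirely with the smooth frame. Write $\omegaup_0=g\,\omegaup$ with $g=1/f$ continuous; since $\omegaup$ is smooth, $dg\wedge\omegaup$ makes sense distributionally, and the weak closedness of $\omegaup_0$ (which your mollification \emph{does} establish) gives $dg\wedge\omegaup+g\,d\omegaup=0$, i.e.\ $(dg+g\alpha)\wedge\omegaup=0$ as a current. Unwinding this against $\eta=\omegaup\wedge\sigma$ gives exactly the weak equation $\int_M g^{-1}\,d\eta=\cdots$ you need, but passing from this to the logarithm $u=-\log g$ again requires a chain-rule step at $\mathcal{C}^0$ regularity; one way to justify it is to approximate $g$ in $\mathcal{C}^0$ by \emph{smooth} functions $g^{\epsilon}$ chosen so that $d(g^{\epsilon}\omegaup)\to 0$ in the sense of currents (e.g.\ by mollifying $g\omegaup$ rather than $g$), which keeps all products under control. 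You should make this step explicit rather than deferring it to ``the same mollification argument''.
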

\subsection{$CR$-foldings} Let us recall the notion of a fold
singularity for a smooth map (see e.g. \cite{E1970}).
Let $M$, $N$ be real smooth manifolds of the same real dimension. 
A map $\piup:M\to{N}$ is a 
\emph{fold-map}
if there is a smooth submanifold $V$ of $M$ such that:
\begin{list}{-}{}
\item the restriction of $\piup$ to $M\setminus{V}$ is a two-sheeted covering;
\item the restriction of $\piup$ to ${V}$ is a smooth immersion;
\item there is an involution  $\sigmaup:U\to{U}$ of a 
tubular neighborhood $U$ of $V$ in $M$ such that 
$\piup\circ\sigmaup=\piup$ on $U$.
\end{list}
The corresponding notion in $CR$ geometry will be 
a folding about a $CR$-divisor.
Let us introduce the notions that we will utilize in the sequel.
\begin{defn}[Smooth $CR$-divisors] A \emph{smooth $CR$-divisor} 
of a $CR$ manifold $M$
is a smooth submanifold $D$ of $M$, having real codimenison $2$, 
such that
for each $p_0\in{D}$ there is an open neighborhood $U_{p_0}$ 
of $p_0$ in $M$ and a
function $f\in\Ot^{\infty}_M(U_{p_0})$ such that
$D\cap{U}_{p_0}=\{p\in {U}_{p_0}\mid f(p)=0\}$ and 
$df(p_0)\wedge{d}\bar{f}(p_0)\neq{0}$.
\end{defn}
\begin{defn}[$CR$-folding] Let $M,N$ be $CR$ manifolds having the same 
$CR$ dimension. A $CR$-folding is a proper 
smooth $CR$ map $\piup:M\to{N}$ such that
there exists a smooth $CR$-divisor $D$ on $M$ with the properties:
\begin{list}{-}{}
\item the restriction of $\piup$ to $M\setminus{D}$ is 
a $CR$-submersion and a smooth circle bundle;
\item 
the restriction of $\piup$ to $D$ is a 
 smooth $CR$ immersion;
\item there is a tubular neighborhood $U$ of $D$ in $M$ and 
an $\mathbf{S}^1$-action on the fibers of $\piup|_{U}:U\to\piup(U)$,
for which $D$ is the set of fixed points.
\end{list}
\end{defn}
\begin{exam} The map
$S^3=\{(z,w)\mid z\bar{z}+w\bar{w}=1\}\ni(z,w)
\xrightarrow{\;\piup\;}{z}\in\mathbb{C}$ is a $CR$-folding. 
The set $D=\{(z,0)\mid z\bar{z}=1\}=\{(z,w)\in{S}^3\mid w=0\}$ 
is a smooth
$CR$-divisor in $S^3$, and $\piup^{-1}(z_0)=\{(z_0,w)
\mid w\bar{w}=1-z_0\bar{z}_0\}$.
We can define, in the
tubular neighborhood $U=S^3\cap\{|z|>0\}$, an 
$\mathbf{S}^1$-action 
by $e^{i\theta}\cdot (z,w)=
(z,e^{i\theta}w)$.
\end{exam}
\begin{exam} \label{ex45} Let $Q\subset\mathbb{CP}^{\nuup}$,
with $\nuup\geq{3}$, be
the ruled real projective  quadric, which is a $CR$ submanifold of
type $(n,1)$, with $n=\nuup-1$, and 
Levi signature $(1,n-1)$. For a suitable choice of homogeneous
coordinates, $Q$ has equation
\begin{equation}
  \label{eq:5.1}
  z_0\bar{z}_0+z_1\bar{z}_1={\sum}_{j=2}^{\nuup}z_j\bar{z}_j.
\end{equation}
The point $p_0\equiv(1,0,\hdots,0)$ does
not belong to $Q$. Hence, by 
associating to each $p\in{Q}$ the complex line
$p_0p$, we obtain a map $\pi:Q\to\mathbb{CP}^{n}$,
where $\mathbb{CP}^{n}$ is the set of complex lines through $p_0$
of $\mathbb{CP}^{n+1}$. After identifying 
$\mathbb{CP}^{n}$ with the
hyperplane $\{z_0=0\}$, the map $\pi$ is described in homogeneous coordinates by
\begin{equation}
  \label{eq:5.2}
  \pi:(z_0,z_1,\hdots,z_\nuup)\longrightarrow (z_1,\hdots,z_\nuup).
\end{equation}
This map is a $CR$-folding of $M$ into $\mathbf{CP}^n$, 
with divisor $Q\cap\{z_0=0\}$, 
whose image
is the complement of
an open  ball in the projective space:
\begin{equation}
  \label{eq:5.3}
  \pi(Q)=\big\{z_1\bar{z}_1\leq {\sum}_{j=2}^{\nuup}z_j\bar{z}_j\big\}.
\end{equation}
\par 
To describe the generators of the ideal sheaf 
$\Jt_Q$ of $Q$, we consider the covering $\{U,V\}$ of $Q$
with $U=\{z_0\neq{0}\}$, $V=\{z_1\neq{0}\}$. Then $\Jt_Q$ 
is defined by the generators
\begin{gather*}
  z_0^{-1}dz_1,\;z_0^{-1}dz_2,\hdots,\;z_0^{-1}dz_\nuup\quad\text{on}\; U\cap{Q},\\
z_1^{-1}dz_0,\; z_1^{-1}dz_2,\hdots,\;z_1^{-1}dz_\nuup\quad\text{on}\; V\cap{Q}.
\end{gather*}
The canonical bundle is generated by $z_0^{-\nuup}dz_1\wedge
dz_2\wedge\cdots{dz_\nuup}$
on $U\cap{Q}$ and by $z_1^{-\nuup}dz_0\wedge
dz_2\wedge\cdots{dz_\nuup}$ on $V\cap{Q}$. 
\end{exam}

\subsection{Construction of a locally non $CR$-embeddable
perturbation}\label{s:5}
In this subsection we describe a procedure to define 
a non locally $CR$-embeddable $CR$ structure on a neighborhood of
$p_0$ in $M$ that we will use in \S\ref{s6} to produce a
\textit{global} example.\par
Let $M$, $N$ be $CR$ manifolds of type $(n,k)$, $(n,k-1)$ respectively.
Assume that there is a $CR$ map $\piup:M\to{N}$ having a Lorentzian
singularity and a $CR$-folding 
at $p_0$, and that $M$, $N$ are locally
$CR$-embeddable at $p_0$, $q_0$, respectively. 
We are in fact in the situation of Lemma\,\ref{lem516} and we keep the
notation therein. 
\par
Let $\alphaup$ be a $(0,1)$-form on $\omega$,
with $\bar{\partial}_N\alphaup=0$ and $\alphaup=0$ on
$\omega_+$. Then
 $z_1^{-1}\piup^*\alphaup$ is well defined and smooth because
$\piup^*\alphaup$ vanishes to infinite order on $D=\{z_1=0\}$, and
we may consider on $U$ the $CR$-structure
with ideal sheaf $\Jt'_U$ generated by
\begin{equation}\label{eq:62}
  dz_1-z_1^{-1}\piup^*\alphaup, \; dz_2,\; \hdots,\; dz_{\nuup}.
\end{equation}
This new $CR$ structure agrees with the original one
to infinite order at all points of~$D$. 
If this new $CR$ structure admits a $CR$-chart centered at $p_0$,
then there is a smooth function $u$, defined on a neighborhood
of $p_0$, with $u(p_0)=0$ and 
\begin{equation}\label{eq63}
  d(e^u(dz_1-z_1^{-1}\piup^*\alphaup)\wedge dz_2\wedge \cdots\wedge
dz_{\nuup})=0.
\end{equation}
Then we obtain
\begin{equation*}
  du\wedge (dz_1-z_1^{-1}\piup^*\alphaup)\wedge dz_2\wedge \cdots\wedge
dz_{\nuup}+z_1^{-2}dz_1\wedge\piup^*\alphaup\wedge dz_2\wedge \cdots\wedge
dz_{\nuup}=0,
\end{equation*}
from which we get
\begin{equation*}
 (d(z_1^2u)-\piup^*\alphaup^*) \wedge\frac{dz_1}{z_1}\wedge
 dz_2\wedge \cdots\wedge
dz_{\nuup} +d(u\,\piup^*\omega^*\wedge
 dz_2\wedge \cdots\wedge
dz_{\nuup})=0.
\end{equation*}
Next we integrate on the fiber. For $q\in{W}\cap\omega_-$,
where $W$ is a suitable small neighborhood of $q_0$ in $\omega$, 
 we obtain
\begin{equation*}
  w(q)=\piup_*(z_1^2u)(q)=\tfrac{1}{2\pi{i}}\oint_{\piup^{-1}(q)}z_1u \,dz_1,
\end{equation*}
and therefore 
\begin{align*}
& dw(q)\wedge
dz_2\wedge \cdots\wedge
dz_{\nuup} =\tfrac{1}{2\pi i}\oint_{\piup^{-1}(q)}d(z_1^2u)\frac{dz_1}{z_1}\wedge
dz_2\wedge \cdots\wedge
dz_{\nuup} \\
&=\tfrac{1}{2\pi i} \oint_{\piup^{-1}(q)} \piup^*\alphaup^*\frac{dz_1}{z_1}\wedge
dz_2\wedge \cdots\wedge
dz_{\nuup}  
=\alphaup\wedge
dz_2\wedge \cdots\wedge
dz_{\nuup},
\end{align*}
because 
\begin{equation*}
  \oint_{\piup^{-1}(q)}d(u\,\piup^*\omega^*\wedge
 dz_2\wedge \cdots\wedge
dz_{\nuup})=0.
\end{equation*}
We observe that $w=0^2$ on $W\cap\partial\omega_-$ and that the equality
established above means that $w$ satisfies
\begin{equation*}
  \bar\partial_Nw=[\alphaup]\quad\text{on $N$}. 
\end{equation*}
By Lemma\,\ref{lem:3.4}, we actually have $w=0^{\infty}$ on $\partial{N}$. 
By Proposition\,\ref{prop:3.3} there are $\bar{\partial}_N$-closed forms
$\alphaup$, of type $(0,1)$, 
vanishing to infinite order on $W\cap\partial\omega_-$, for which the
Cauchy problem
\begin{equation*}
  \begin{cases}
    \bar{\partial}_Nw=\alphaup&\text{on $W\cap\omega_-$},\\
w=0^{\infty}&\text{on $W\cap\partial\omega_-$}
  \end{cases}
\end{equation*}
has no solution when $W$ is any open neighborhood of $q_0$,
hence yielding non $CR$-embeddable $CR$ structures on $U$ which agree to infinite
order with the original one.
\subsection{A global example}\label{s6}
In this section we construct a 
non locally $CR$-embeddable $CR$ structure on
the Lorentzian quadric of Example\,\ref{ex45}. \par
In $\mathbb{CP}^{\nuup}$, with 
homogeneous coordinates $z_0,z_1,\hdots,z_{\nuup}$,  
for $\nuup\geq{2}$, we consider the quadric
$Q=\{z_0\bar{z}_0+z_1\bar{z}_1 = z_2\bar{z}_2+\cdots+z_{\nuup}\bar{z}_{\nuup}\}$.  
Fix the divisor $D=\{z_0=0\}$ and the global $CR$-folding
$Q\to{N}$ where 
$N=\{z_1\bar{z}_1\leq z_2\bar{z}_2+\cdots+z_{\nuup}\bar{z}_{\nuup}\}
\subset\mathbb{CP}^{\nuup-1}$
is a strictly pseudoconcave closed domain in $\mathbb{CP}^{\nuup -1}$. 
The closure of its complement is  an Euclidean ball 
${B}$ of $\mathbb{C}^{\nuup-1}=\mathbb{CP}^{\nuup-1}\setminus\{z_1=0\}$.
Fix a smooth function $f$, with compact support in $\mathbb{C}^{\nuup -1}$,
which is holomorphic on $B$, but cannot be continued holomorphically beyond
any point of $\partial{B}$, and let $\alphaup$ be the restriction of
$\bar{\partial}f$ to $N$. We observe that $\zeta=\dfrac{z_1}{z_0}$ 
is meromorphic on $\mathbf{CP}^m$ and that $\zeta\piup^*\alphaup$
is well-defined on $Q$. We define a new $CR$ structure on $Q$
by the ideal sheaf having 
generators
\begin{gather*}
z_1^{-1}dz_0-\zeta\piup^*\alphaup^*,\; z_1^{-1}dz_2,\;\hdots,\; z_1^{-1}dz_{\nuup}
\quad\text{on $M\cap\{z_1\neq{0}\}$},\\
z_0^{-1}dz_1,\; z_0^{-1}dz_2,\;\hdots,\; z_0^{-1}dz_{\nuup}
\quad\text{on $M\setminus\supp\piup^*\alphaup$}.
\end{gather*}
Note that the hyperplane $\{z_1=0\}$ in $\mathbb{CP}^{\nuup -1}$ does not intersect
the support of $\alphaup$, and hence the ideal sheaf is well defined.
By the argument in \S\ref{s:5}, with this new $CR$ structure
$Q$ is not locally $CR$-embeddable at all points of the divisor $D$. 
Thus we have obtained
\begin{thm}
There are $CR$ structures of type $(m-1,1)$ on the Lorentzian quadric
$Q$ that are not locally $CR$-embeddable at all points of a hyperplane section 
of~$Q$. \qed
\end{thm}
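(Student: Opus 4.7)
The plan is to globalize the local construction of \S\ref{s:5} using the geometric data of Example~\ref{ex45}. I would work with the Lorentzian quadric
$Q\subset\mathbb{CP}^{\nuup}$ defined by $z_0\bar{z}_0+z_1\bar{z}_1={\sum}_{j=2}^{\nuup}z_j\bar{z}_j$, together with the global $CR$-folding
$\piup:Q\to{N}\subset\mathbb{CP}^{\nuup-1}$, whose divisor $D=Q\cap\{z_0=0\}$ is a hyperplane section. Since $N$ is strictly pseudoconcave and its complement in the affine chart $\mathbb{CP}^{\nuup-1}\setminus\{z_1=0\}\cong\mathbb{C}^{\nuup-1}$ is the Euclidean ball $B$, I would first exhibit a smooth compactly supported function
$f$ on $\mathbb{C}^{\nuup-1}$, holomorphic on $B$, which fails to extend holomorphically beyond any point of $\partial B$. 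Such $f$ exists by a standard construction: produce $h\in\Ot(B)\cap\mathcal{C}^{\infty}(\bar{B})$ with $\partial{B}$ as its natural boundary (e.g.\ by a series of singular factors with poles accumulating densely on $\partial B$, calibrated to remain $\mathcal{C}^{\infty}$ up to $\bar{B}$) and then extend $h$ to a smooth compactly supported function on $\mathbb{C}^{\nuup-1}$ by Whitney's theorem. The form $\alphaup=(\bar\partial f)|_N$ is then smooth, $\bar\partial_N$-closed, with $\supp\alphaup\subset\{z_1\neq{0}\}$.

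Next I would assemble the global perturbation. The ratio $\zetaup=z_1/z_0$ is meromorphic on $\mathbb{CP}^{\nuup}$, with polar locus on $Q$ exactly $D$; because $\piup^*\alphaup$ vanishes to infinite order along $D$ (its support misses $\{z_1=0\}$), the product $\zetaup\,\piup^*\alphaup$ extends to a smooth $(0,1)$-form on the whole of $Q$, vanishing identically on $D$. I would then let $\Jt'_Q$ be the ideal sheaf which on the covering $\{U_0,U_1\}$ with $U_i=Q\cap\{z_i\neq{0}\}$ is generated by
\begin{equation*}
z_1^{-1}dz_0-\zetaup\,\piup^*\alphaup,\;\; z_1^{-1}dz_2,\,\hdots,\, z_1^{-1}dz_{\nuup} \quad\text{on}\;\; U_1,
\end{equation*}
and by $z_0^{-1}dz_1,\,z_0^{-1}dz_2,\,\hdots,\,z_0^{-1}dz_{\nuup}$ on $U_0\setminus\supp\piup^*\alphaup$. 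The two local presentations agree on their overlap, because $\alphaup$ vanishes there and both reduce to the standard $CR$ ideal of $Q$ from Example~\ref{ex45}. Involutivity is a direct check essentially identical to that in Lemma~\ref{lemr32a}, relying only on $\bar\partial_N\alphaup=0$ and on $d(\zetaup\,\piup^*\alphaup)\wedge\thetaup_1\wedge\cdots\wedge\thetaup_{\nuup}=0$. A rank count confirms that $\Jt'_Q$ is the ideal sheaf of a genuine $CR$ structure on $Q$ of type $(\nuup-1,1)$, agreeing to infinite order with the original one at every point of $D$.

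Finally, I would invoke the argument of \S\ref{s:5} pointwise. Pick $p_0\in D$ and set $q_0=\piup(p_0)\in\partial{N}$. In suitable homogeneous coordinates centered at $p_0$ the new structure is exactly of the form analyzed in \S\ref{s:5}. If $\Jt'_Q$ admitted a local $CR$-chart at $p_0$, then fiber integration of the perturbed canonical form along $\piup$ would produce a smooth function $w$ on a neighborhood $W$ of $q_0$ in $N$ satisfying
$\bar\partial_Nw=\alphaup$ on $W\cap\omega_-$ and, by Lemma~\ref{lem:3.4}, vanishing to infinite order on $W\cap\partial\omega_-$. Then $f+w$ would give a holomorphic extension of $f|_B$ across $\partial B$ at $q_0$, contradicting the natural-boundary property of $f$. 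Since $\piup(D)=\partial B$, the obstruction persists at every point of the hyperplane section $D$, proving the theorem.

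The main obstacle is the global coherence of the construction: one must verify that $\zetaup\,\piup^*\alphaup$ is genuinely smooth across $D$ (where $\zetaup$ has a pole, compensated by the infinite-order vanishing of $\piup^*\alphaup$), that the two local generating sets patch on the overlap, and that involutivity of $\Jt'_Q$ survives globally. All of these reduce to the single observation that $\supp\alphaup$ is compactly contained in the affine chart where $\zetaup$ is holomorphic; once this bookkeeping is in place, the non-embeddability conclusion at every point of $D$ follows uniformly from the local machinery of \S\ref{s:5}.
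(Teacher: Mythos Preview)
Your proposal is correct and follows essentially the same construction as the paper: the same quadric $Q$, the same $CR$-folding $\piup$ onto the complement of a ball $B\subset\mathbb{CP}^{\nuup-1}$, the same choice of $f\in\mathcal{C}^{\infty}_0(\mathbb{C}^{\nuup-1})$ holomorphic on $B$ with natural boundary $\partial B$, the same perturbed ideal sheaf generated by $z_1^{-1}dz_0-\zetaup\,\piup^*\alphaup$ together with $z_1^{-1}dz_j$, and the same appeal to the fiber-integration argument of \S\ref{s:5}. Two cosmetic slips worth fixing: the reason $\piup^*\alphaup$ vanishes to infinite order on $D$ is that $\alphaup=\bar\partial f$ vanishes to infinite order on $\partial B=\piup(D)$ (not that $\supp\alphaup$ misses $\{z_1=0\}$, which is the separate fact needed for the patching), and the extension contradicting the natural-boundary property is $f-w$, not $f+w$.
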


\bibliographystyle{amsplain}
\renewcommand{\MR}[1]{}
\providecommand{\bysame}{\leavevmode\hbox to3em{\hrulefill}\thinspace}
\providecommand{\MR}{\relax\ifhmode\unskip\space\fi MR }
\providecommand{\MRhref}[2]{%
  \href{http://www.ams.org/mathscinet-getitem?mr=#1}{#2}
}
\providecommand{\href}[2]{#2}

\end{document}